\newcommand{\bigO}{\mathcal{O}}
\renewcommand{\bar}[1]{\overline{#1}}    
\renewcommand{\phi}{\varphi}
\def\imod#1{\allowbreak\mkern5mu({\operator@font mod}\,\,#1)}
\def\starmod#1{\allowbreak\mkern5mu({\operator@font mod^{*}}\,\,#1)}
\definecolor{Gray}{gray}{0.9}
\newtheorem*{rep@theorem}{\rep@title}
\newcommand{\newreptheorem}[2]{%
\newenvironment{rep#1}[1]{%
 \def\rep@title{#2 \ref{##1}}%
 \begin{rep@theorem}}%
 {\end{rep@theorem}}}
\newtheorem{thm}{Theorem}[section]
\newtheorem{theorem}[thm]{Theorem}
\newtheorem{proposition}[thm]{Proposition}
\newtheorem{lemma}[thm]{Lemma}
\newtheorem{corollary}[thm]{Corollary}
\newtheorem{question}[thm]{Question}
\newtheorem{hypothesis}[thm]{Hypothesis}
\newtheorem{criterion}[thm]{Criterion}
\newtheorem*{theorem*}{Theorem}
\newtheorem*{problem*}{Problem}
\newtheorem*{corollary*}{Corollary}
\theoremstyle{definition}
\newtheorem{remarks}[thm]{Remarks}
\newtheorem{notation}[thm]{Notation}
\newcommand{\normalarray}{\renewcommand{\arraystretch}{1.1}}
\newcommand{\C}{\mathbb{C}} 
\newcommand{\N}{\mathbb{N}} 
\newcommand{\Q}{\mathbb{Q}} 
\newcommand{\R}{\mathbb{R}} 
\newcommand{\Z}{\mathbb{Z}} 
\newenvironment{rezabib}
  {\bibdiv\biblist\setupbib}
  {\endbiblist\endbibdiv}
\def\setupbib{\catcode`@=\active}
\def\gatherkey#1#2{\gatherkeyaux{#1}#2\gatherkeyaux}
\def\gatherkeyaux#1#2,#3\gatherkeyaux{\bib{#2}{#1}{#3}}
\begin{document}

\title{Euler's Function on Products of Primes in Progressions}

\thanks{Research of both authors is partially supported by NSERC}

\date{\today}

\keywords{\noindent small values of Euler's function, arithmetic progressions, Generalized Riemann Hypothesis}

\subjclass[2010]{11N37, 11M26, 11N56.}

\author{Amir Akbary}
\author{Forrest J. Francis}

\address{Department of Mathematics and Computer Science \\
        University of Lethbridge \\
        Lethbridge, AB T1K 3M4 \\
        Canada}
        \email{amir.akbary@uleth.ca}
 \address{Department of Mathematics and Computer Science \\
        University of Lethbridge \\
        Lethbridge, AB T1K 3M4 \\
        Canada}
\email{fj.francis@uleth.ca}

\begin{abstract}
We study generalizations of some results of Jean-Louis Nicolas regarding the relation between small values of Euler's function $\varphi(n)$ and the Riemann Hypothesis. Among other things, we prove that for $1\leq q\leq 10$ and for $q=12, 14$, the generalized Riemann Hypothesis for the Dedekind zeta function of the cyclotomic field $\mathbb{Q}(e^{2\pi i/q})$ is true if and only if for all integers $k\geq 1$ we have
\[\frac{\bar{N}_k}{\varphi(\bar{N}_k)(\log(\varphi(q)\log{\bar{N}_k}))^{\frac{1}{\varphi(q)}}} > \frac{1}{C(q,1)}.\]
Here $\bar{N}_k$ is the product of the first $k$ primes in the arithmetic progression $p\equiv 1 \imod{q}$  and 

$C(q, 1)$ is the constant appearing in the asymptotic formula
 \[\prod_{\substack{p \leq x \\ p \equiv 1\imod{q}}} \left(1 - \frac{1}{p}\right) \sim \frac{C(q, 1)}{(\log{x})^\frac{1}{\varphi(q)}},\]
 as $x\rightarrow\infty$. We also prove that, for $q\leq 400,000$ and integers $a$ coprime to $q$, the analogous inequality 
 \[\frac{\bar{N}_k}{\varphi(\bar{N}_k)(\log(\varphi(q)\log{\bar{N}_k}))^{\frac{1}{\varphi(q)}}} > \frac{1}{C(q,a)}\]
 holds for infinitely many values of $k$. If in addition $a$ is a not a square modulo $q$, then there are infinitely many $k$ for which this inequality holds and also infinitely many $k$ for which this inequality fails.
\end{abstract}

\maketitle

\section{Introduction}

Let $\varphi(n)$ be Euler's totient function. A result of Landau from 1909 captures the minimal behavior of $\frac{\varphi(n)}{n}$. 

\begin{theorem}[{\cite{hardy2008}*{Theorem 328}}]\label{landauthm} Let $\gamma$ be the Euler-Mascheroni constant. Then
\[\limsup_{n \rightarrow \infty} \frac{n}{\varphi(n) \log{\log{n}}} = e^\gamma.\]
\end{theorem}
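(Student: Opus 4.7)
The plan is to establish that the $\limsup$ equals $e^\gamma$ in two parts: first, to exhibit an explicit sequence along which the expression tends to $e^\gamma$ (so the $\limsup$ is at least $e^\gamma$), and second, to show that no other $n$ can do asymptotically better (so the $\limsup$ is at most $e^\gamma$). The key analytic input will be Mertens' third theorem,
\[
\prod_{p \leq x}\left(1 - \frac{1}{p}\right) \sim \frac{e^{-\gamma}}{\log x},
\]
which is the $q=1$ instance of the asymptotic formula already featured in the abstract.

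For the lower bound, I would work with the \emph{primorials} $N_k := q_1 q_2 \cdots q_k$, where $q_i$ denotes the $i$-th prime. Since $N_k$ is squarefree, $\varphi(N_k)/N_k = \prod_{p \leq q_k}(1-1/p)$, and Mertens' theorem gives $N_k/\varphi(N_k) \sim e^\gamma \log q_k$. For the denominator, I use $\log N_k = \theta(q_k)$, where $\theta$ is Chebyshev's function; the Prime Number Theorem (or even the weaker Chebyshev bounds) yields $\theta(q_k) \sim q_k$, so $\log\log N_k \sim \log q_k$. Combining these gives $N_k/(\varphi(N_k)\log\log N_k) \to e^\gamma$.

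For the upper bound, the plan is to show the primorials are extremal. Given any $n$, I first replace it by its squarefree kernel $m = \prod_{p \mid n} p$: this leaves $n/\varphi(n)$ unchanged and cannot increase $\log\log n$, so the ratio only grows, reducing the problem to squarefree $n$. Next, among squarefree $n$ with exactly $k$ distinct prime factors $p_1 < \cdots < p_k$, each $p_i$ satisfies $p_i \geq q_i$. Since $t \mapsto t/(t-1)$ is decreasing on $[2,\infty)$, a termwise comparison gives $n/\varphi(n) \leq N_k/\varphi(N_k)$; moreover $n \geq N_k$, so $\log\log n \geq \log\log N_k$. Consequently
\[
\frac{n}{\varphi(n)\log\log n} \leq \frac{N_k}{\varphi(N_k)\log\log N_k},
\]
and the global $\limsup$ is controlled by the limit along the primorials, which is $e^\gamma$.

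The main obstacle is the analytic input itself: Mertens' third theorem is essentially equivalent, at the level of sharpness needed here, to the Prime Number Theorem. Taking Mertens' theorem as known, the remainder is an elementary combinatorial reduction. I expect a parallel structure to govern the paper's generalizations: one will need the analogous asymptotic for $\prod_{p\leq x,\,p\equiv a\,(\mathrm{mod}\,q)}(1-1/p)$ together with a suitable extremal family in place of the primorials, foreshadowed by the $\bar{N}_k$ appearing in the abstract.
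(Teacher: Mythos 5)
Your proof is correct, and the lower bound (primorials plus Mertens and the Prime Number Theorem) matches the sketch the paper gives just after the statement and also the sequence $\bar{N}_k$ used in the proof of the paper's generalization, Theorem \ref{app}. For the upper bound, however, you take a genuinely different route. You reduce to squarefree $n$ and then run an extremality argument: among squarefree integers with exactly $k$ prime factors, $N_k$ simultaneously maximizes $n/\varphi(n)$ and minimizes $\log\log n$, so the whole quantity is maximized at $N_k$; since $\frac{N_k}{\varphi(N_k)\log\log N_k}\to e^\gamma$, the limsup over all $n$ is controlled (with the side remark, which you leave implicit, that $n$ with boundedly many prime factors force the ratio to $0$, and that the squarefree kernel $m$ of $n$ must also tend to infinity for the reduction to be meaningful). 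The paper's proof of Theorem \ref{app}, which it explicitly says adapts the proof of Hardy--Wright Theorem 328, does something else: it splits the prime divisors of $n$ at the threshold $\varphi(q)\log n$, bounds the number $r$ of large prime factors by $\log n/\log(\varphi(q)\log n)$ via $(\varphi(q)\log n)^r<n$, observes that the large primes contribute a factor $(1-1/(\varphi(q)\log n))^{-r}\to 1$, and controls the small primes by enlarging to the full Mertens product over $p\le \varphi(q)\log n$. Both methods use exactly the same analytic input (Mertens and the prime number theorem) and both generalize cleanly to $S_{q,a}$; yours is more structural in that it literally exhibits the primorials as extremal, whereas the threshold-splitting argument avoids any reduction to squarefree integers and bounds the limsup directly without comparing $n$ to $N_k$.
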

A proof of Theorem \ref{landauthm} follows by considering the sequence of \emph{primorials}, $N_k := \prod_{i=1}^k p_i$, alongside Mertens' theorem \cite[Theorem 429]{hardy2008} and the Prime Number Theorem \cite[Theorem 6]{hardy2008}. Theorem \ref{landauthm} is visually expressed in Figure \ref{figure:landauthm}. (Colored plots throughout this paper have been generated using Maple${}^\mathrm{TM}$ \footnote{Maple is a trademark of Waterloo Maple, inc.} \cite{maple}.)

\begin{figure}[h]
\centering
\includegraphics[width=0.4\textwidth,height=0.3\textheight,keepaspectratio]{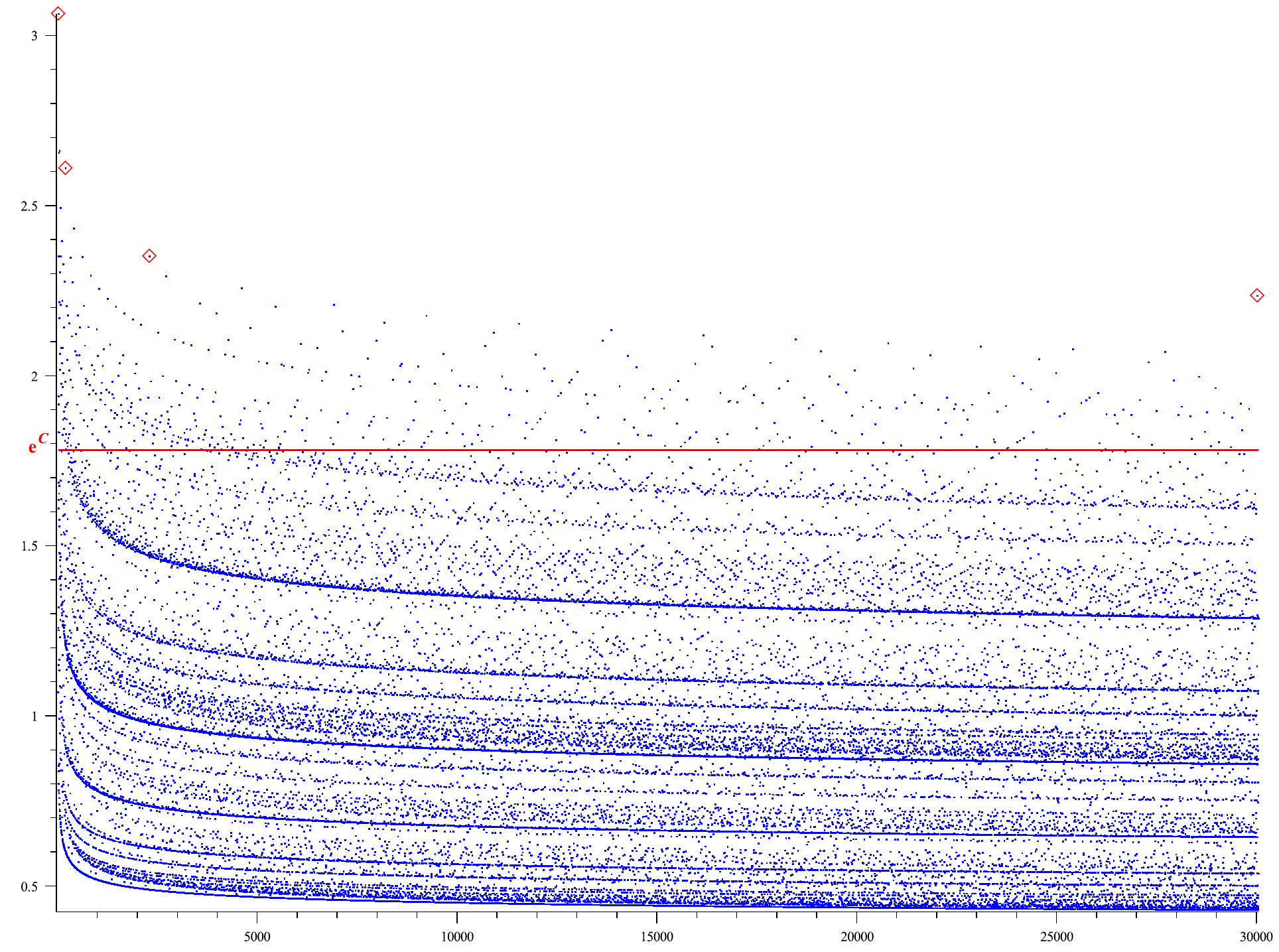}
\caption[Visualization of Theorem \ref{landauthm}]{Visualization of Theorem \ref{landauthm}. Points represent $(n,\tfrac{n}{\varphi(n) \log{\log{n}}})$ for $n$ between 29 and 30055. Points marked with red diamonds correspond to the primorials 30, 210, 2310, and 30030.}
\label{figure:landauthm}
\end{figure}

In  \cite{rosser1962},  Rosser and Schoenfeld studied the behavior of the expresssion $\tfrac{n}{\varphi(n)\log\log{n}}$ in a more explicit manner.
\begin{theorem}[{\cite[Theorem 15]{rosser1962}}]
 For $n > 2$, 
\[\frac{n}{\varphi(n) \log{\log{n}}} \leq e^\gamma + \frac{2.50637}{(\log\log{n})^2}.\]
\end{theorem}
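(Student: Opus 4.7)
The plan is to reduce to primorials and then invoke explicit Mertens-type estimates, tracking constants carefully.

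\textbf{Step 1: Reduction to primorials.} For any $n>1$ with $k=\omega(n)$ distinct prime divisors $q_1<q_2<\cdots<q_k$, since $q_i\geq p_i$ (the $i$-th prime), I have
\[
\frac{n}{\varphi(n)}=\prod_{i=1}^{k}\frac{q_i}{q_i-1}\leq\prod_{i=1}^{k}\frac{p_i}{p_i-1}=\frac{N_k}{\varphi(N_k)},
\]
and also $n=\prod_i q_i^{a_i}\geq\prod_i q_i\geq N_k$, hence $\log\log n\geq\log\log N_k$ (for $n$ not too small). Therefore
\[
\frac{n}{\varphi(n)\log\log n}\leq\frac{N_k}{\varphi(N_k)\log\log N_k},
\]
so it suffices to prove the inequality when $n=N_k$ is a primorial.

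\textbf{Step 2: Explicit Mertens bound.} Rosser and Schoenfeld prove, by combining partial summation with an explicit estimate for $\sum_{p\leq x}(\log p)/p$ and the tail of the Kronecker--Mertens series $\sum_{p}\bigl(\log(1-1/p)+1/p\bigr)$, an inequality of the shape
\[
\prod_{p\leq x}\Bigl(1-\tfrac{1}{p}\Bigr)^{-1}<e^{\gamma}\log x\left(1+\frac{A}{\log^{2}x}\right)
\]
valid for $x\geq x_0$, with an explicit constant $A$ whose numerical value is responsible for $2.50637$. Applied to $x=p_k$ this yields
\[
\frac{N_k}{\varphi(N_k)}<e^{\gamma}\log p_k\left(1+\frac{A}{\log^{2}p_k}\right).
\]

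\textbf{Step 3: Converting $\log p_k$ to $\log\log N_k$.} Since $\log N_k=\theta(p_k)$, one uses an explicit Chebyshev-type estimate of the form $|\theta(x)-x|\leq \eta(x)$ with $\eta(x)/x$ tending to $0$ effectively, to deduce
\[
\log p_k=\log\log N_k+\log\!\frac{p_k}{\theta(p_k)}=\log\log N_k+\delta_k,
\]
with $|\delta_k|$ small and quantitatively bounded. Substituting and expanding, using $\log(1+u)\leq u$, gives
\[
\frac{N_k}{\varphi(N_k)\log\log N_k}\leq e^{\gamma}+\frac{B}{(\log\log N_k)^{2}}
\]
for some explicit $B$, and the whole arithmetic is arranged so that the resulting $B$ does not exceed $2.50637$.

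\textbf{Step 4: Small cases.} The Mertens and Chebyshev estimates from Steps 2--3 are only valid from some threshold $x_0$ onward, so the remaining values $2<n\leq M$ (for an explicit $M$) must be checked directly by computing $n/(\varphi(n)\log\log n)$ and verifying the claimed bound against $e^{\gamma}+2.50637/(\log\log n)^{2}$.

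The main obstacle is \textbf{not} the overall strategy, which is rather standard after Landau, but the explicit optimization of constants: one must simultaneously push the threshold $x_0$ down (to minimize the finite-check range in Step 4) while keeping both $A$ in the Mertens bound and the Chebyshev error $\eta(x)$ tight enough that the combined constant $B$ in Step 3 still comes out $\leq 2.50637$. This is the delicate numerical bookkeeping that occupies the bulk of \cite{rosser1962}.
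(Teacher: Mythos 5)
The paper does not prove this statement; it simply cites Theorem~15 of Rosser--Schoenfeld, so there is no proof in the paper to compare your attempt against. Evaluating your plan on its own terms, the overall architecture (primorials $\to$ explicit Mertens $\to$ $\log p_k \leftrightarrow \log\log N_k$ conversion $\to$ finite check) is the right one and matches what Rosser and Schoenfeld actually do.

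However, Step~1 as written has a genuine logical gap. You correctly establish
\[
\frac{n}{\varphi(n)\log\log n}\leq\frac{N_k}{\varphi(N_k)\log\log N_k}\qquad(k=\omega(n)),
\]
but the inequality to be proved has an error term $2.50637/(\log\log n)^2$ that \emph{depends on $n$}, and it \emph{shrinks} as $n$ grows. Knowing the bound at $N_k$ gives $\frac{n}{\varphi(n)\log\log n}\leq e^\gamma + 2.50637/(\log\log N_k)^2$, and since $n\geq N_k$ one has $(\log\log N_k)^2\leq(\log\log n)^2$, so this is \emph{weaker} than the target $e^\gamma+2.50637/(\log\log n)^2$, not stronger. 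The reduction does not go through in the form you state. The repair is to work with the equivalent form $\frac{n}{\varphi(n)}\leq h(n)$ where $h(x)=e^\gamma\log\log x+\tfrac{2.50637}{\log\log x}$, and to use the chain $\frac{n}{\varphi(n)}\leq\frac{N_k}{\varphi(N_k)}\leq h(N_k)\leq h(n)$. The last step requires verifying that $h$ is increasing, which holds precisely when $(\log\log x)^2>2.50637\,e^{-\gamma}$, i.e.\ for $x\gtrsim 27$ (so from $N_3=30$ onward); the cases $\omega(n)\leq 2$ and small $n$ then need a separate, but easy, direct argument, because for $\omega(n)\leq 2$ one has $n/\varphi(n)\leq 3$ while $h(n)\to\infty$. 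You should make this monotonicity observation explicit, because without it the reduction to primorials does not suffice.
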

Rosser and Schoenfeld also remarked that they do not know whether there are infinitely many natural numbers $n$ satisfying
\begin{equation}\label{NicolasIneq}
 \frac{n}{\varphi(n) \log{\log{n}}} > e^\gamma.
\end{equation}

In \cite{nicolas1983}, Nicolas expressed the preceding remark by asking the following question. 

\begin{question}[{\cite[p. 375]{nicolas1983}}]\label{RSQ}
 Do there exist infinitely many $n \in \N$ for which 
$\tfrac{n}{\varphi(n)\log\log{n}} > e^\gamma?$
\end{question}

In the same paper, he resolves this question in the following theorem. 

\begin{theorem}[{\cite[Theorem 1]{nicolas1983}}]\label{nicolas1}
 There exist infinitely many $n \in \N$ for which $\frac{n}{\varphi(n) \log\log{n}} > e^\gamma$.
\end{theorem}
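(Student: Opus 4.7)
The plan is to establish the inequality along the subsequence of primorials $N_k=\prod_{i\le k}p_i$ and exploit a dichotomy on the truth of the Riemann Hypothesis. Taking logarithms,
\[
\log\!\Bigl(\frac{N_k}{\varphi(N_k)\log\log N_k}\Bigr)-\gamma
= -\sum_{p\le p_k}\log\!\Bigl(1-\tfrac{1}{p}\Bigr)-\gamma-\log\log\log N_k.
\]
Mertens' theorem, with an effective remainder, rewrites the first two terms as $\log\log p_k+r(p_k)$ with $r(x)\to 0$, while $\log N_k=\theta(p_k)$ for Chebyshev's $\theta$-function. Thus the desired inequality $\frac{N_k}{\varphi(N_k)\log\log N_k}>e^{\gamma}$ reduces, after a first-order Taylor expansion of $\log\log\theta(p_k)-\log\log p_k$, to a comparison whose sign is driven by $\theta(p_k)-p_k$ balanced against the Mertens remainder.

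First, I would assume the Riemann Hypothesis. Under RH one has the strong bound $\theta(x)=x+\bigO(\sqrt{x}\log^{2} x)$, together with the corresponding sharpening of $r(x)$, and these dominate the $\theta$-contribution. A careful computation then shows that the reduced main term has a definite negative sign, giving $\frac{N_k}{\varphi(N_k)\log\log N_k}>e^{\gamma}$ for all sufficiently large $k$. Next, I would assume RH fails, so some zeta zero has real part $\beta>\tfrac{1}{2}$. A classical oscillation theorem of Littlewood yields $\theta(x)-x=\Omega_{\pm}(x^{\beta})$, so the relevant terms no longer balance: one can select primorials along which the $\theta$-oscillation overwhelms the Mertens remainder with the favorable sign, producing infinitely many $k$ with $\frac{N_k}{\varphi(N_k)\log\log N_k}>e^{\gamma}$ (and infinitely many with the reverse inequality). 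In either case, Theorem \ref{nicolas1} follows.

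The main obstacle is to calibrate the two auxiliary estimates with matched precision. Mertens' theorem and the PNT for $\theta$ must be compared at the same order, since $r(p_k)$ and $\log\log\theta(p_k)-\log\log p_k$ both live at scales finer than the leading $\log\log p_k$; in particular one needs an Abel-summation identity that faithfully translates $\theta(x)-x$ into the Mertens remainder so that their signs can be compared. Under RH this calibration is routine, but in the RH-failure case it is delicate: one must argue that Littlewood's oscillation survives the Abel-summation step and the Taylor expansion of the double logarithm without being swamped by lower-order errors. This quantitative matching is the technical heart of the argument.
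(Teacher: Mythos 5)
Your proposal is correct and takes essentially the same route the paper attributes to Nicolas: the paper does not re-prove Theorem~\ref{nicolas1} but derives it from the dichotomy of Theorem~\ref{Nicolas83} (Nicolas' Theorem~2), in which one shows under RH that the inequality holds at every primorial via the strong bound $\theta(x)=x+\bigO(\sqrt{x}\log^{2}x)$, while under $\neg$RH one invokes a Landau/Littlewood-type oscillation theorem to transfer $\theta(x)-x=\Omega_{\pm}(x^{\beta})$ through an Abel-summation identity relating the Mertens remainder to $\theta$, yielding infinitely many primorials with each sign. Your plan, including the identification of the Abel-summation calibration as the technical crux, matches both Nicolas' original argument and the paper's generalization to arithmetic progressions (Proposition~\ref{logup}, Theorems~\ref{omegaJ2} and~\ref{big5}).
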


Nicolas' proof leverages properties of the Riemann zeta function, $\zeta(s)$, against the behavior of $\varphi(n)$ at primorials. Recall that the Riemann zeta function has a pole at $s = 1$ and trivial zeroes at $s = {-2}, {-4}, {-6}\ldots$. Its \emph{nontrivial} zeroes are those found in the \emph{critical strip} $0<\Re(s)<1$.  The Riemann Hypothesis (RH) predicts the location of these nontrivial zeroes.
\begin{hypothesis}[{RH}]
The nontrivial zeroes of $\zeta(s)$ have real part $1/2$.
\end{hypothesis}

In relation to Question \ref{RSQ}, Nicolas considered the behavior of $\varphi(N_k)$ under two possible resolutions to the Riemann Hypothesis. 
\begin{theorem}[{\cite[Theorem 2]{nicolas1983}}] \label{Nicolas83} If the Riemann Hypothesis is true, then for all \emph{primorials} $N_k = \prod_{i=1}^k p_i$, where $p_i$ is the $i$-th prime, we have
\[ \frac{N_k}{\varphi(N_k) \log{\log{N_k}}} > e^\gamma.\]
If the Riemann Hypothesis is false, then there are infinitely many primorials for which the above inequality holds and also infinitely many primorials for which the above inequality does not hold. 
\end{theorem}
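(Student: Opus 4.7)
Setting $x=p_k$, so that $\log N_k=\theta(x)$ and $\varphi(N_k)/N_k=\prod_{p\leq x}(1-1/p)$, the inequality to be proved is equivalent to
$$h(x)\;:=\;e^{\gamma}\log\theta(x)\prod_{p\leq x}\bigl(1-\tfrac{1}{p}\bigr)\;<\;1.$$
Observe that $h$ is piecewise constant and jumps only at primes, so any sign change of $h(x)-1$ on $(1,\infty)$ is realised at some $x=p_k$; this automatically converts statements ``in $x$'' into statements ``in $k$'' later. To analyse $\log h(x)$, I would expand $\log(1-1/p)=-\sum_{k\geq 1}1/(kp^k)$, split off the contribution of $\sum_{p\leq x}1/p$, and use the classical identity $M=\gamma-\sum_{p}\sum_{k\geq 2}1/(kp^k)$ relating Euler's constant $\gamma$ to Mertens' constant $M$. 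Combined with Abel summation applied to $\sum_{p\leq x}1/p$ via $\theta$, and a Taylor expansion of $\log(\log\theta(x)/\log x)$ about $1$, the boundary terms of size $(\theta(x)-x)/(x\log x)$ cancel pleasantly and one is left with
$$\log h(x)\;=\;\int_{x}^{\infty}\frac{\theta(t)-t}{t^{2}\log t}\Bigl(1+\tfrac{1}{\log t}\Bigr)\,dt\;+\;O\!\left(\tfrac{(\theta(x)-x)^{2}}{x^{2}\log x}\right)\;+\;O(1/x).$$
Thus the sign of $\log h(x)$ is governed by a tail average of $\theta(t)-t$.

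Under RH, the explicit formula $\theta(t)=t-\sum_{\rho}t^{\rho}/\rho+O(1)$ and termwise integration yield
$$\int_{x}^{\infty}\frac{\theta(t)-t}{t^{2}\log t}\,dt\;=\;\sum_{\rho}\frac{x^{\rho-1}}{\rho(\rho-1)\log x}\;+\;(\text{lower order}).$$
Under RH, $|\rho(\rho-1)|=|\rho|^{2}$ and $\sum_{\rho}|\rho|^{-2}<\infty$, so the sum converges absolutely and is $O\bigl(1/(\sqrt{x}\log x)\bigr)$. To promote this bound to the strict inequality $\log h(x)<0$ at every primorial, I would couple an effective Schoenfeld-type explicit estimate $|\theta(x)-x|\leq\varepsilon(x)\sqrt{x}$ (valid under RH) with a direct evaluation of the tail integral, verifying that it is strictly positive for all $x\geq x_{0}$ for some explicit threshold $x_{0}$, and then checking the finitely many primorials $N_{k}$ with $p_{k}\leq x_{0}$ by numerical computation.

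If RH is false, let $\Theta$ denote the supremum of the real parts of the nontrivial zeros of $\zeta$, so $1/2<\Theta\leq 1$. The dominating zeros (together with their conjugates, by the functional equation) contribute a term of size $x^{\Theta}$ to $\theta(x)-x$, swamping every other zero-contribution. Landau's oscillation theorem for sign changes, applied to the Mellin transform attached to the integral displayed above (or directly to $\log h(x)$), then forces $\log h(x)$ to change sign infinitely often with oscillations of amplitude at least $x^{\Theta-1-\varepsilon}$. Since $h$ is a step function whose jumps occur precisely at primes, every such sign change is witnessed by some index $k$, producing infinitely many $k$ with $h(p_{k})<1$ and infinitely many with $h(p_{k})>1$.

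\medskip
\noindent The main technical obstacle is the RH-case sign analysis: the bound $\log h(x)=O(1/(\sqrt{x}\log x))$ is sharp in size but \emph{a priori} of indefinite sign, and pinning down that the tail integral is positive at every primorial $N_{k}$ requires both an effective explicit estimate of the Schoenfeld type and a careful finite numerical check at small $k$.
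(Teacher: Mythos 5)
Your reduction to the function $h(x)=e^{\gamma}\log\theta(x)\prod_{p\le x}(1-1/p)$ and the tail-integral representation
\[
\log h(x)=\int_x^{\infty}\frac{\theta(t)-t}{t^2\log t}\Bigl(1+\tfrac{1}{\log t}\Bigr)\,dt+O(1/x)
\]
match the paper's Proposition \ref{logup} specialized to $q=a=1$, and your treatment of the RH-false direction via Landau's oscillation theorem is the right idea (cf.\ Theorems \ref{omegaJ}, \ref{omegaJ2}, \ref{big5}). The gap is in the RH direction, and it is structural, not merely technical. You write the explicit formula as $\theta(t)=t-\sum_{\rho}t^{\rho}/\rho+O(1)$, but this is the explicit formula for $\psi$, not $\theta$; the correct relation is $\theta(t)=\psi(t)-t^{1/2}+O(t^{1/3})$, so the $-\sqrt t$ coming from prime squares must appear. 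That term is precisely what makes the argument go through: it produces a contribution of size $-2/(\sqrt x\log x)$ to $\log h(x)$ of \emph{definite negative sign}, which in the paper's framework is the $-\mathcal R_{q,a}\,m/((m-1)\varphi(q)x^{(m-1)/m}\log x)$ term in Proposition \ref{transferup} and Theorem \ref{oscillation}. Under RH the zero sum contributes an oscillating quantity bounded in magnitude by $\mathcal F_1/(\sqrt x\log x)\approx 0.046/(\sqrt x\log x)$, and the whole point is that $\mathcal F_1<2$, so the prime-square bias dominates for large $x$. Without that bias, your estimate $\log h(x)=O(1/(\sqrt x\log x))$ is indeed, as you say, of indefinite sign, and no Schoenfeld-type bound $|\theta(x)-x|\le\varepsilon(x)\sqrt x$ can single out a sign; there is simply nothing to tip the balance. (You also seem to want the tail integral to be ``strictly positive,'' but since $\log h(x)\approx$ the tail integral and you need $\log h(x)<0$, you in fact need it to be strictly negative; that sign confusion is minor compared with the missing $-\sqrt t$ term.) To repair the RH case you must keep track of the $\theta$-to-$\psi$ transfer as the paper does, isolate the negative prime-square term, and compare its coefficient against an absolutely convergent bound for the zero sum (the quantity $\mathcal F_1$ in the paper, computed in Section \ref{comp}); only then does the finite numerical verification for small $x$ close the argument.
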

As a direct corollary of the above theorem, we have the following criterion for the Riemann Hypothesis.

\begin{criterion}[Nicolas' Criterion for the RH]\label{NicolasRH}
 The Riemann Hypothesis is true if and only if  there exists $k_0 >0$ such that for all $k\geq k_0$,
\[ \frac{N_k}{\varphi(N_k) \log{\log{N_k}}} > e^\gamma.\]
\end{criterion}

In this article, we are motivated to generalize the above criterion in the context of Dirichlet $L$-functions. Towards this aim we seek  a generaliziation of Theorem \ref{Nicolas83} to a setting involving primes in arithmetic progressions. We will restrict our attention to the behavior of $\varphi(n)$ at elements of the set 
 \[S_{q, a} := \left\{n \in \N \mbox{ ; } p \mid n \implies p \equiv a \imod{q}\right\},\]
 where $q, a$ are coprime natural numbers. This set contains an analogue of the primorials, the $k$-th \emph{primorial in} $S_{q,a}$ given by
 \[\bar{N}_k =N_{q,a}(k) := \prod_{i=1}^k \bar{p}_i,\]
where $\bar{p}_i$ is the $i$-th prime in the arithmetic progression $a \imod{q}$. Throughout this article, $q$ and $a$ will be fixed and coprime. For notational convenience, we often suppress reference to $q$ and $a$ and use $\bar{N}_k$ to denote the $k$-th primorial in $S_{q,a}$. (Note that for $q=a=1$, we have $\bar{N}_k = N_k$.) In this context, we have analogues for both Mertens' theorem and the Prime Number Theorem. The analogue of Mertens' theorem was originally established by Williams \cite{williams1974}. Here, however, we refer to the work of Languasco and Zaccagnini (\cite{languasco2007}, \cite{languasco2009}, \cite{languasco2010}) where they have provided an explicit form for the constant appearing in the generalized Mertens' theorem.

\begin{theorem}[{\cite[p. 46]{languasco2007}}]\label{mertensap}
Let $x \geq 2$ and $q, a \in \N$ be coprime. Then, 
\[\prod_{\substack{p \leq x \\ p \equiv a\imod{q}}} \left(1 - \frac{1}{p}\right) \sim \frac{C(q, a)}{(\log{x})^\frac{1}{\varphi(q)}},\]
as $x \rightarrow \infty$, where 
\[C(q, a)^{\varphi(q)} = e^{-\gamma} \prod_{p} \left(1 - \frac{1}{p}\right)^{\alpha(p; q, a)} \]
and 
\[ \alpha(p; q, a) = 
\begin{cases}
\varphi(q) - 1 & \textnormal{if } p \equiv a \imod{q}, 
\\
-1 &\textnormal{otherwise. }
\end{cases}
\]
\end{theorem}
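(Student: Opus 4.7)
The plan is to reduce the stated asymptotic to the classical Mertens' third theorem $\prod_{p\leq x}(1-1/p) \sim e^{-\gamma}/\log x$ by a careful comparison of partial products. I would work with the ratio
\[R(x) := \prod_{\substack{p\leq x \\ p\equiv a\imod{q}}}\left(1-\tfrac{1}{p}\right)^{\varphi(q)} \cdot \prod_{p\leq x}\left(1-\tfrac{1}{p}\right)^{-1},\]
and aim to show that $R(x) \to \Lambda$ for some positive constant $\Lambda$. Granting this, classical Mertens immediately gives $\prod_{p\leq x,\, p\equiv a\imod{q}}(1-1/p)^{\varphi(q)} \sim e^{-\gamma}\Lambda/\log x$; extracting a $\varphi(q)$-th root then yields the stated asymptotic with $C(q,a)^{\varphi(q)} = e^{-\gamma}\Lambda$, and the remaining problem is to identify $\Lambda$ with $\prod_{p}(1-1/p)^{\alpha(p;q,a)}$.

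Convergence of $R(x)$ is where Dirichlet characters enter. I would use the orthogonality identity $\sum_{\chi \bmod q}\bar\chi(a)\chi(p) = \varphi(q)$ when $p\equiv a\imod{q}$ and $0$ otherwise (in particular $0$ when $p\mid q$), applied to $\log R(x)$. The principal-character contribution cancels against $\sum_{p\leq x}\log(1-1/p)$ except for the finitely many primes $p\mid q$, giving
\[\log R(x) = -\sum_{\substack{p\leq x \\ p\mid q}}\log\left(1-\tfrac{1}{p}\right) + \sum_{\chi\neq\chi_0}\bar\chi(a)\sum_{p\leq x}\chi(p)\log\left(1-\tfrac{1}{p}\right).\]
Taylor-expanding $\log(1-1/p)=-1/p+O(1/p^2)$, the tail of order $1/p^2$ converges absolutely, so convergence of the non-principal character sums reduces to convergence of $\sum_{p\leq x}\chi(p)/p$ for every non-principal $\chi$ modulo $q$.

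The main obstacle is precisely this convergence, which sits just beyond the standard complex-analytic proof of Dirichlet's theorem: it requires not only $L(1,\chi)\neq 0$, but a Tauberian-type passage from the absolutely convergent region $\Re(s)>1$ down to $s=1$. I would handle it via the decomposition $\log L(s,\chi) = \sum_p \chi(p)p^{-s} + H_\chi(s)$ with $H_\chi$ holomorphic and bounded on $\Re(s) > 1/2$, combined with partial summation against the prime number theorem for arithmetic progressions (equivalently, a Mertens-style identity for $\sum_{p\leq x}\chi(p)\log p/p$ followed by Abel summation). This is the most technical ingredient of the argument, and it is the only place where nontrivial analytic information about Dirichlet $L$-functions is invoked.

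To conclude, I would identify $\Lambda$ with the claimed Euler product by reversing the orthogonality step: for $p\nmid q$ one has $\alpha(p;q,a) = \sum_{\chi\neq\chi_0}\bar\chi(a)\chi(p)$, while the stated convention $\alpha(p;q,a)=-1$ for $p\mid q$ matches the finite correction term in the expression for $\log R(x)$. Substituting back, the limit of $\log R(x)$ becomes $\sum_{p}\alpha(p;q,a)\log(1-1/p)$, so $\Lambda = \prod_{p}(1-1/p)^{\alpha(p;q,a)}$ and the stated formula for $C(q,a)^{\varphi(q)}$ follows.
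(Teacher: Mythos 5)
Theorem \ref{mertensap} is stated in the paper as an imported result, cited from Languasco and Zaccagnini (the asymptotic itself goes back to Williams); the paper supplies no proof of it, so there is no internal proof for your attempt to be compared against. Evaluated on its own merits, your plan is the standard route and is correct in outline. The orthogonality decomposition of $\log R(x)$, the bookkeeping of the principal-character contribution versus the full product $\prod_{p\leq x}(1-1/p)$, the treatment of the finitely many primes $p \mid q$ (for which $\chi(p)=0$ for every $\chi$, matching the convention $\alpha(p;q,a)=-1$), and the reverse-orthogonality identity $\alpha(p;q,a)=\sum_{\chi\neq\chi_0}\overline{\chi}(a)\chi(p)$ for $p\nmid q$ are all correct, and the algebra delivering $C(q,a)^{\varphi(q)}=e^{-\gamma}\Lambda$ checks out. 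You have also correctly isolated the single nontrivial analytic ingredient: the convergence of $\sum_{p\leq x}\chi(p)/p$ for each non-principal $\chi$ modulo $q$, which does require more than $L(1,\chi)\neq 0$ alone. Either of the two routes you sketch --- a Tauberian passage to $s=1$ for $\log L(s,\chi)=\sum_p \chi(p)p^{-s}+H_\chi(s)$, or a Mertens-type bound on $\sum_{p\leq x}\chi(p)\log p/p$ followed by Abel summation --- is sound and would close the argument. Until one of those is written out in full, this remains a plan rather than a proof, but there is no strategic gap; the architecture is the same one used in the classical proofs of this result.
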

We note that, in agreement with the classical Mertens' theorem, $C(1,1)$ is $e^{-\gamma}$ since $\alpha(p;1,1) = 0$ for all primes $p$. 

For an analogue of the prime number theorem, we have
\begin{equation}\label{pntap}
\theta(x; q, a) \sim \frac{x}{\varphi(q)},
\end{equation}
as $x \rightarrow \infty$ (see {\cite[Theorem 6.8]{narkiewicz2000}}), where
\[\theta(x; q, a) := \sum_{\substack{p \leq x \\ p \equiv a\imod{q}}} \log{p}.\]

Hence, we have all of the tools required to establish a generalization of Theorem \ref{landauthm} for primes in arithmetic progressions. 

\begin{theorem}\label{app}
Let $q, a \in \N$ be coprime. Then
\[\limsup_{n \in S_{q, a}}  \frac{n}{\varphi(n) (\log(\varphi(q)\log{n}))^{1/\varphi(q)}} = \frac{1}{C(q,a)}, \]
where $C(q, a)$ is defined in Theorem \ref{mertensap}. 
\end{theorem}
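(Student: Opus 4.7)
The plan is to establish the two bounds $\limsup \geq 1/C(q,a)$ and $\limsup \leq 1/C(q,a)$ separately, following the classical strategy behind Theorem \ref{landauthm} but substituting Theorem \ref{mertensap} and the progression PNT \eqref{pntap} for their classical counterparts. The primorials $\bar{N}_k$ will play the role that the ordinary primorials play in Landau's proof.

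For the lower bound, I would evaluate the expression along the subsequence of primorials $\bar{N}_k\in S_{q,a}$. Since $\bar{N}_k$ is squarefree with prime factors $\bar{p}_1,\ldots,\bar{p}_k$, Theorem \ref{mertensap} yields
\[\frac{\varphi(\bar{N}_k)}{\bar{N}_k} = \prod_{\substack{p\leq \bar{p}_k \\ p\equiv a\imod{q}}}\left(1-\frac{1}{p}\right) \sim \frac{C(q,a)}{(\log\bar{p}_k)^{1/\varphi(q)}}.\]
From \eqref{pntap}, $\log\bar{N}_k = \theta(\bar{p}_k;q,a)\sim \bar{p}_k/\varphi(q)$, so $\varphi(q)\log\bar{N}_k/\bar{p}_k\to 1$ and consequently $\log(\varphi(q)\log\bar{N}_k)\sim\log\bar{p}_k$. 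Multiplying these two asymptotics gives
\[\frac{\bar{N}_k}{\varphi(\bar{N}_k)(\log(\varphi(q)\log\bar{N}_k))^{1/\varphi(q)}} \longrightarrow \frac{1}{C(q,a)},\]
which forces $\limsup \geq 1/C(q,a)$.

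For the upper bound, the key observation is that primorials are extremal: if $n\in S_{q,a}$ has distinct prime factors $q_1<\cdots<q_k$ with $k=\omega(n)$, then $q_i\geq \bar{p}_i$ for every $i$, so (using that $x\mapsto x/(x-1)$ is decreasing) $n/\varphi(n)=\prod_i q_i/(q_i-1)\leq \bar{N}_k/\varphi(\bar{N}_k)$, while $n\geq q_1\cdots q_k\geq \bar{N}_k$. These combine to give
\[\frac{n}{\varphi(n)(\log(\varphi(q)\log n))^{1/\varphi(q)}} \leq \frac{\bar{N}_k}{\varphi(\bar{N}_k)(\log(\varphi(q)\log\bar{N}_k))^{1/\varphi(q)}}.\]
Along any sequence $n_j\to\infty$ in $S_{q,a}$ approaching the $\limsup$, I would split on whether $\omega(n_j)$ is bounded. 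If it is bounded by some $K$, then $n_j/\varphi(n_j)\leq \bar{N}_K/\varphi(\bar{N}_K)$ stays bounded while $\log n_j\to\infty$, forcing the expression to $0$; otherwise $\omega(n_j)\to\infty$, and the right-hand side of the displayed inequality tends to $1/C(q,a)$ by the lower-bound calculation. Either way, $\limsup \leq 1/C(q,a)$.

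The overall argument is essentially a careful transcription of Landau's proof and I do not anticipate any serious obstacle. The only step that warrants attention is the asymptotic $\log(\varphi(q)\log\bar{N}_k)\sim\log\bar{p}_k$, which is an immediate consequence of \eqref{pntap} and is precisely the relation that makes the normalization $(\log(\varphi(q)\log n))^{1/\varphi(q)}$ — rather than some simpler-looking expression — the natural one in the progression setting.
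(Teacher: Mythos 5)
Your proof is correct; the lower bound (evaluating along the primorials $\bar{N}_k$ via Theorem \ref{mertensap} and \eqref{pntap}) matches the paper's argument, but your upper bound takes a genuinely different route. The paper follows the Hardy--Wright template directly: for a given $n \in S_{q,a}$, split its prime divisors at the threshold $\varphi(q)\log n$, count the primes above the threshold ($r < \log n/\log(\varphi(q)\log n)$), and dominate the contribution of primes below the threshold by the full Mertens product over the progression. Your approach instead reduces each $n$ to the primorial $\bar{N}_{\omega(n)}$ by observing that, among $n \in S_{q,a}$ with $\omega(n) = k$, the primorial $\bar{N}_k$ simultaneously maximizes $n/\varphi(n)$ and minimizes $n$, and then disposes of the bounded-$\omega$ and unbounded-$\omega$ cases separately. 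Your route is arguably more transparent in that it identifies primorials as the extremal sequence at the outset and never needs the threshold splitting; the paper's route avoids a case split and handles all $n$ uniformly in a single asymptotic estimate. One small point of exposition: the dichotomy \emph{$\omega(n_j)$ bounded or $\omega(n_j)\to\infty$} is not exhaustive as stated; one should pass to a subsequence on which $\omega(n_j)$ is bounded (giving limit $0$) or $\omega(n_j)\to\infty$ (giving limit $1/C(q,a)$), which suffices since the limsup is then $\leq 1/C(q,a)$ either way. That is cosmetic; the underlying argument is sound.
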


\begin{figure}[h]
  \centering
  \begin{minipage}[t]{0.49\textwidth}
    \includegraphics[width=\textwidth]{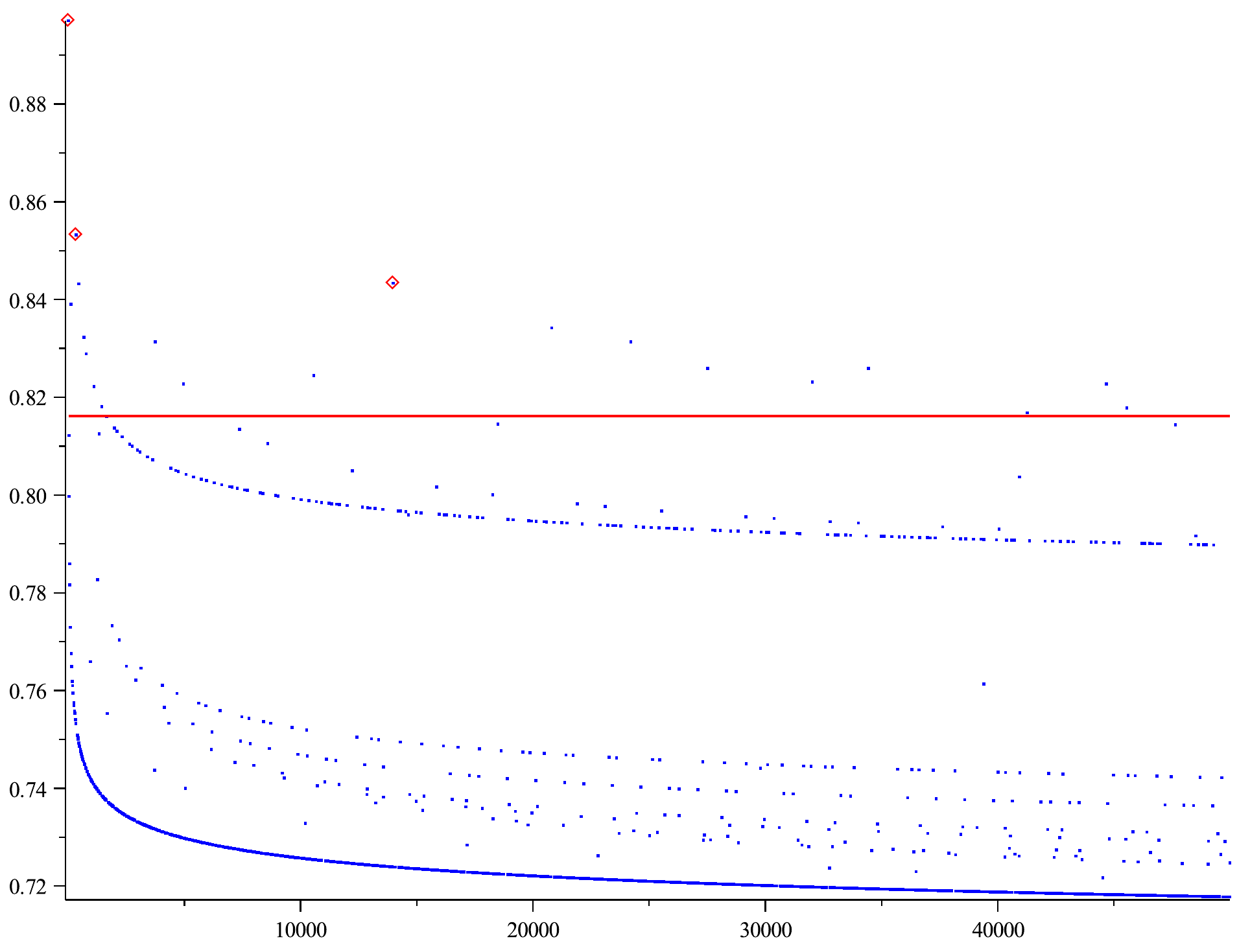}
\caption[Visualization of Theorem \ref{app}]{Visualization of Theorem \ref{app} for $q = 5$, $a = 1$. Points represent $(n,\tfrac{n}{(\varphi(n) \log{\log{n}})^\frac{1}{\phi(q)}})$ for $n \in S_{5,1}$ between 11 and 49991. Points marked with red diamonds correspond to $11$, $341$, and $13981$, the first three primorials in $S_{5,1}$. The red line is $C(5,1)^{-1} \approx 1.2252$.}\label{figure:filter51}
  \end{minipage}
  \hfill
  \begin{minipage}[t]{0.49\textwidth}
    \includegraphics[width=\textwidth]{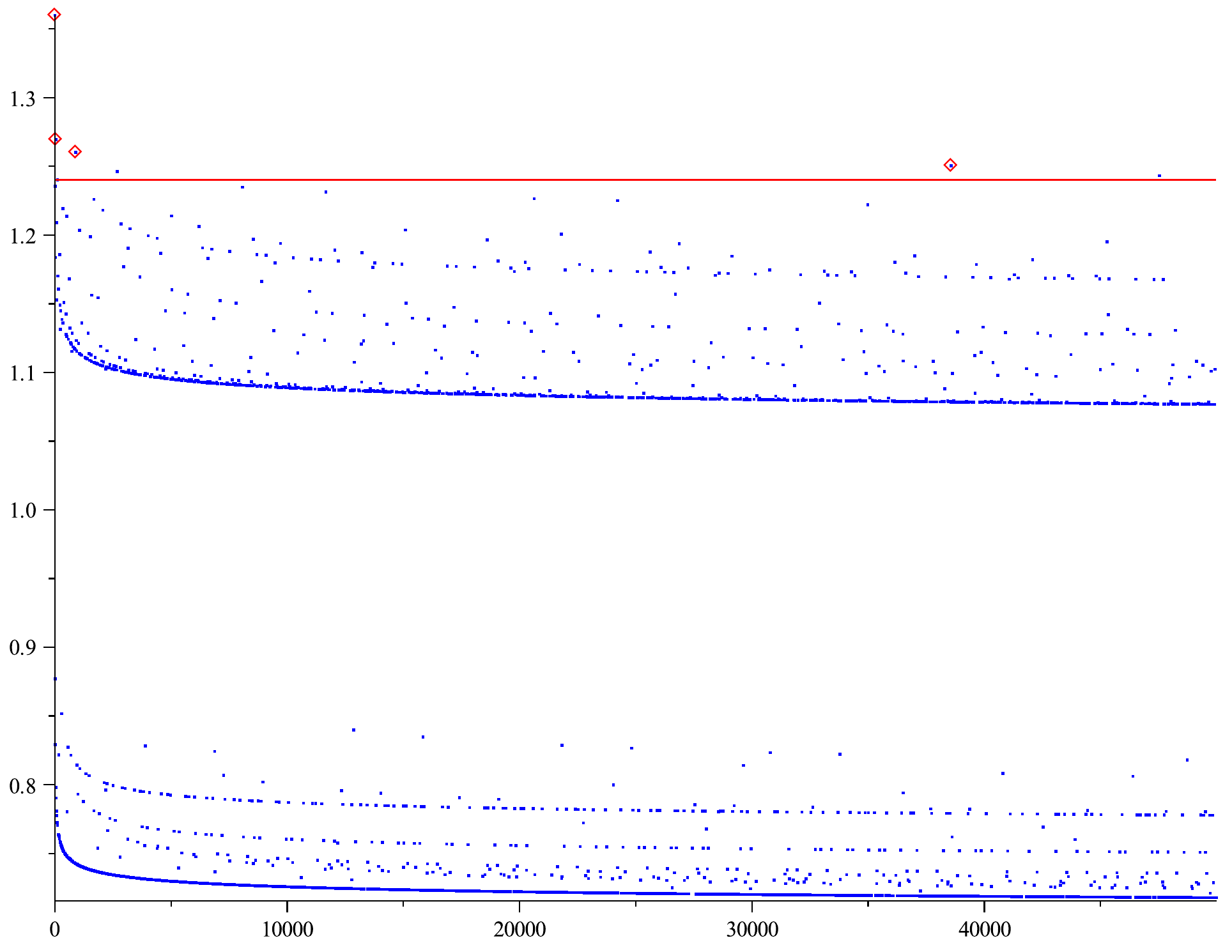}
\caption[Visualization of Theorem \ref{app}]{Visualization of Theorem \ref{app} for $q = 5$, $a = 3$. Points represent $(n,\tfrac{n}{(\varphi(n) \log{\log{n}})^\frac{1}{\phi(q)}})$ for $n \in S_{5,3}$ between 3 and 49993. Points marked with red diamonds correspond to $3, 39, 897$, and $38571$, the first four primorials in $S_{5,3}$. The red line is $C(5,3)^{-1} \approx 0.8060$.}\label{figure:filter53}
  \end{minipage}
\end{figure}

In each fixed set $S_{q,a}$, one can observe behavior congruous with the behavior in Figure \ref{figure:landauthm}. Figures \ref{figure:filter51} and \ref{figure:filter53} visualize Theorem \ref{app} for $q=5$ and $a = 1, 3$. 

At this point, it seems reasonable to extend this generalization along the line of study begun by Nicolas. We are interested in the following question.
\begin{question}\label{GQ}
 Let $q, a \in \N$ be coprime and consider the inequality 
 \begin{equation}\label{GeneralIneq}
  \frac{n}{\varphi(n)(\log(\varphi(q)\log{n}))^{\frac{1}{\varphi(q)}}} > \frac{1}{C(q,a)}.
 \end{equation}
Are there infinitely many $n \in S_{q,a}$ for which \eqref{GeneralIneq} is satisfied?
\end{question}

Nicolas \cite[pp. 376-77]{nicolas1983} observed that one can encode information regarding \eqref{GeneralIneq} at primorials in $S_{1,1}$, using a real-valued function. Mimicking his construction, let $\bar{p}$ represent any prime in the progression $a \imod{q}$. Define 
\[f(x;q,a) := \frac{{\left(\log(\varphi(q){\theta(x;q,a)})\right)}^{\frac{1}{\varphi(q)}}}{C(q,a)}\cdot \prod_{\bar{p}\leq x} \left(1-\frac{1}{\bar{p}}\right).\]
Hence, for any $x\in \left[\bar{p}_k, \bar{p}_{k+1}\right)$, 
\[f(x;q,a) = \frac{{\left(\log(\varphi(q)\log{\bar{N}_k})\right)}^{\frac{1}{\varphi(q)}}}{C(q,a)}\cdot\frac{\varphi(\bar{N}_k)}{\bar{N}_k}.\]
It is therefore apparent that \eqref{GeneralIneq} holds for $\bar{N}_k$ if and only if $f(x;q,a) < 1$ for any $x \in \left[\bar{p}_k, \bar{p}_{k+1}\right)$ or, equivalently,
\begin{equation}\label{equivalence}
 \log{f(x;q,a)} = \frac{\log\log(\varphi(q){\theta(x;q,a)})}{\varphi(q)} + \sum_{\bar{p} \leq x} \log\left(1-\frac{1}{\bar{p}}\right) - \log{C(q,a)} < 0,
\end{equation}
for $x \in \left[\bar{p}_k, \bar{p}_{k+1}\right)$.

We supply plots of $\log f(\bar{p}_k;q,a)$ for several values of $q$ and $a$. Since $\log f(x;q,a)$ is fixed between primes in the progression $a \imod{q}$, the horizontal axis in each plot is $k$, the index of $\bar{p}_k$, rather than $x$. Each plot presents data for primes $\bar{p}_k < 50,000$. For example, the first plot indicates that $f(x;1,1) < 1$ for $x \leq 49,999$. In Theorem 3(a) of \cite{nicolas1983}, Nicolas showed (using estimates of Rosser and Schoenfeld \cite{rosser1962}) that $f(x;1,1) < 1$ for $2 \leq x \leq 10^8$ and further that, assuming RH, it will remain negative for all values of $x$. The plots distinguish between three cases of residues modulo $q$. For $a = 1$, $\log{f(p_k;q,a)}$ is black; for other square $a$, $\log{f(p_k;q,a)}$ is red or yellow; for non-square $a$, $\log{f(p_k;q,a)}$ is a cool color. These plots suggest that the behavior of $\log{f(p_k;q,a)}$ around 0 differs depending on whether $a$ is a square or a non-square modulo $q$. Of note is the plot for $q = 7$, where we come across several examples where $\log{f(\bar{p}_k;7,a)} > 0$, all of which occur when $a$ is not square modulo 7.  

\begin{figure}[!bp]
  \centering
  \begin{minipage}[t]{0.49\textwidth}
    \includegraphics[width=\textwidth]{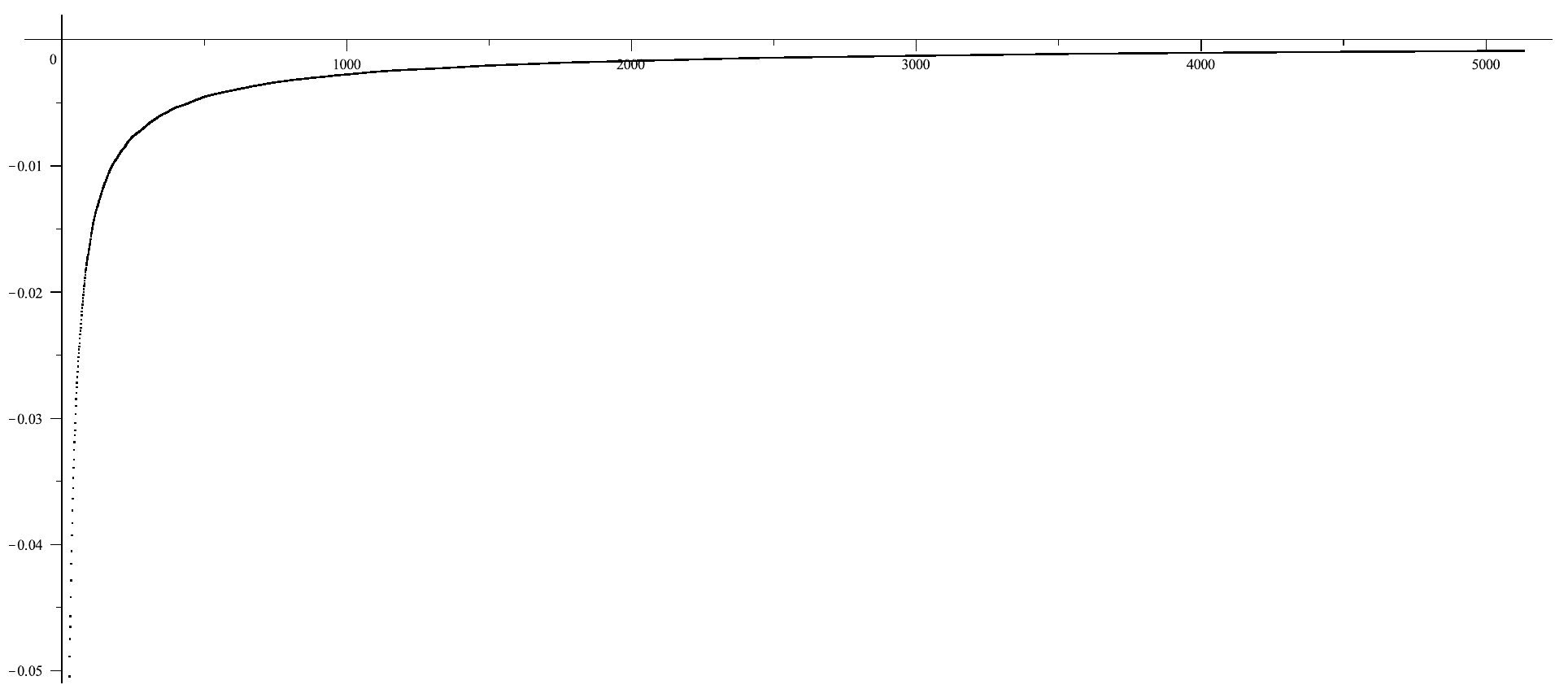}
\caption[Plot of $\log{f(p_k;1,1)}$]{Plot of $\log{f(p_k;1,1)}$.}
  \end{minipage}
  \hfill
  \begin{minipage}[t]{0.49\textwidth}
    \includegraphics[width=\textwidth]{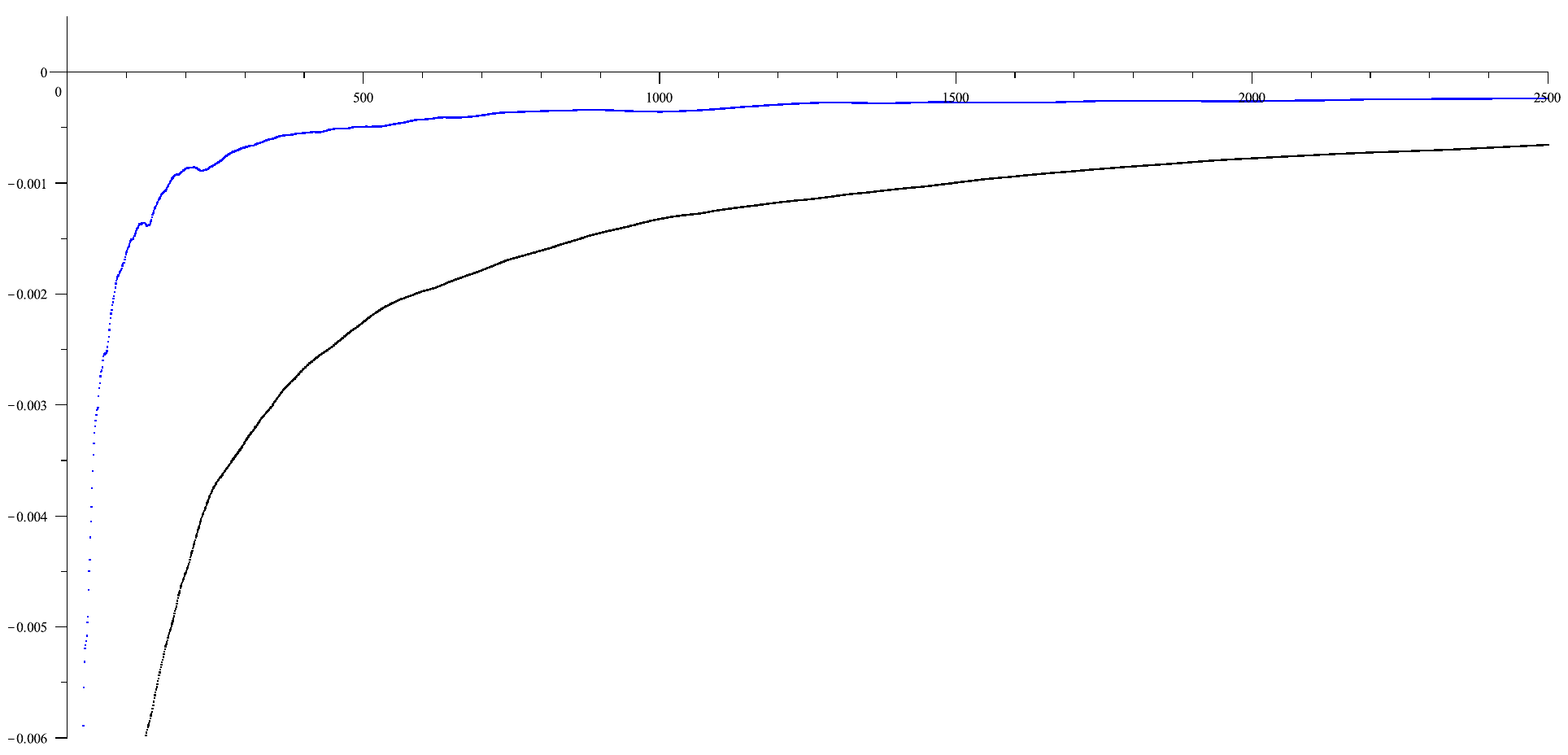}
\caption[Plot of $\log{f(\bar{p}_k;3,a)}$]{Plot of $\log{f(\bar{p}_k;3,a)}$. \newline The black plot corresponds to $a = 1$ and the blue plot to $a = 2$.} 
  \end{minipage}
\end{figure}

\begin{figure}[!tbp]
  \centering
  \begin{minipage}[t]{0.49\textwidth}
    \includegraphics[width=\textwidth]{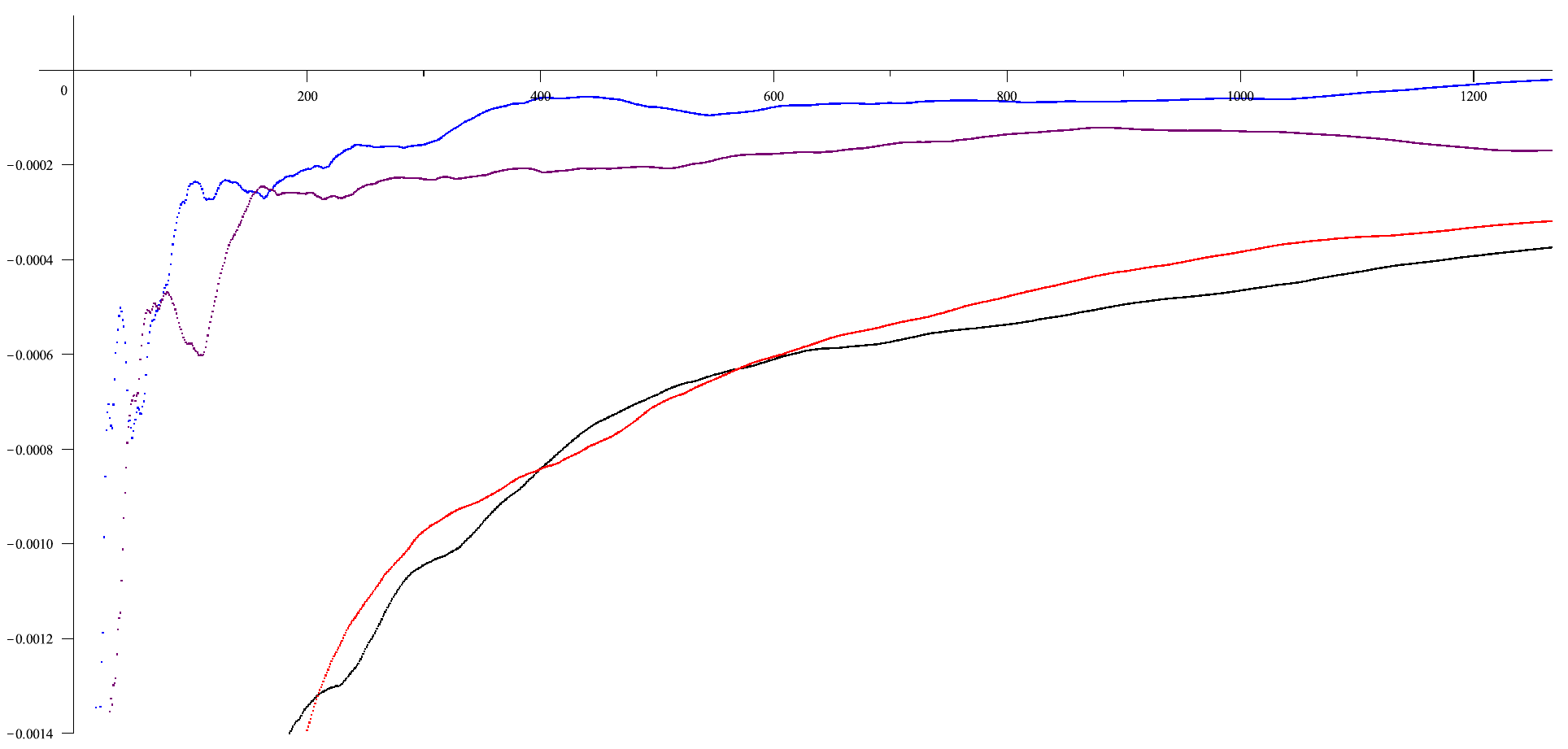}
\caption[Plot of $\log{f(\bar{p}_k;5,a)}$]{Plot of $\log{f(\bar{p}_k;5,a)}$. The black plot corresponds to $a = 1$, the red plot to $a = 4$, the blue plot to $a = 2$, and the purple plot to $a = 3$.} 
  \end{minipage}
  \hfill
  \begin{minipage}[t]{0.49\textwidth}
    \includegraphics[width=\textwidth]{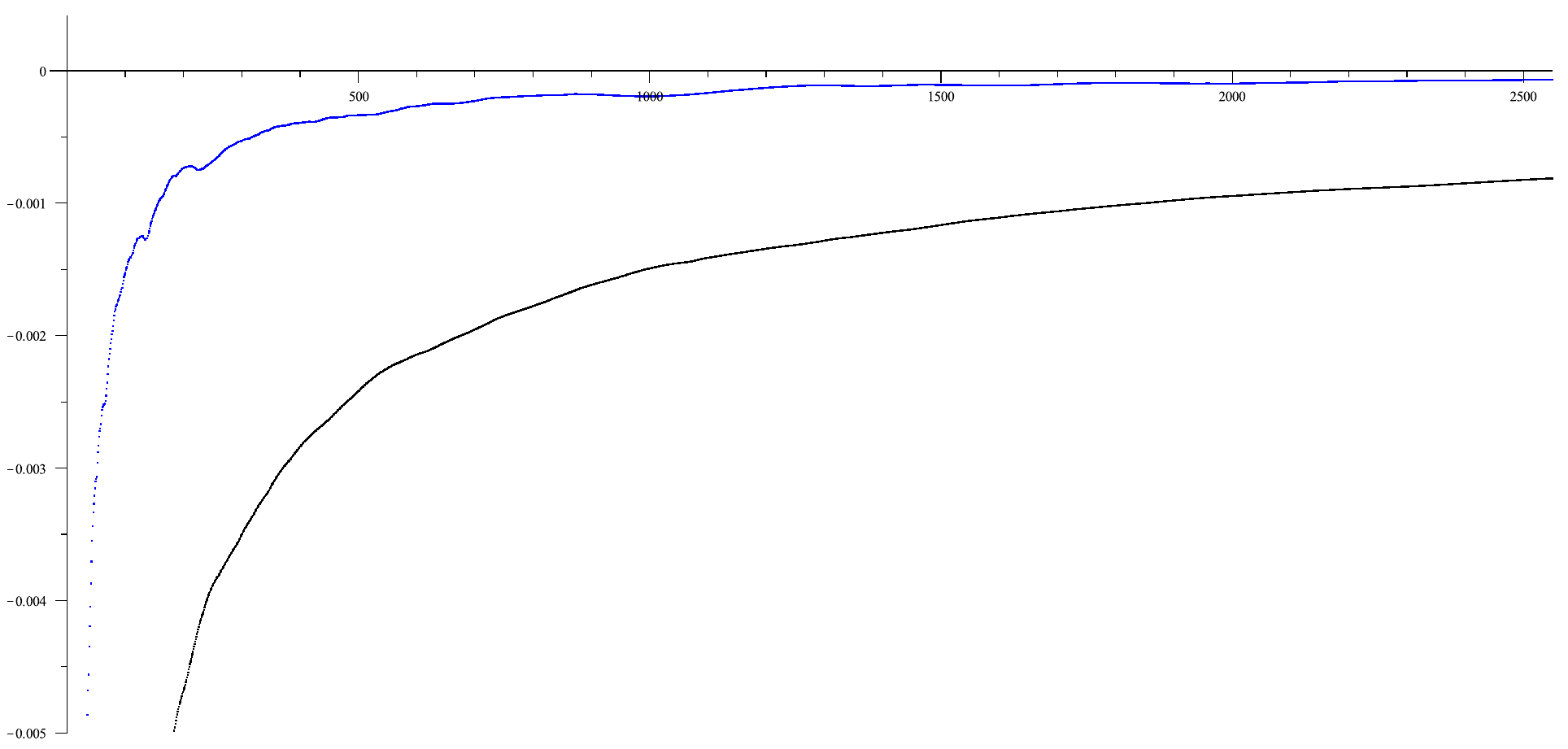}
\caption[Plot of $\log{f(\bar{p}_k;6,a)}$]{Plot of $\log{f(\bar{p}_k;6,a)}$. The black plot corresponds to $a = 1$ and the blue plot to $a = 5$.} 
  \end{minipage}
\end{figure}

\begin{figure}[!tbp]
  \centering
  \begin{minipage}[t]{0.49\textwidth}
    \includegraphics[width=\textwidth]{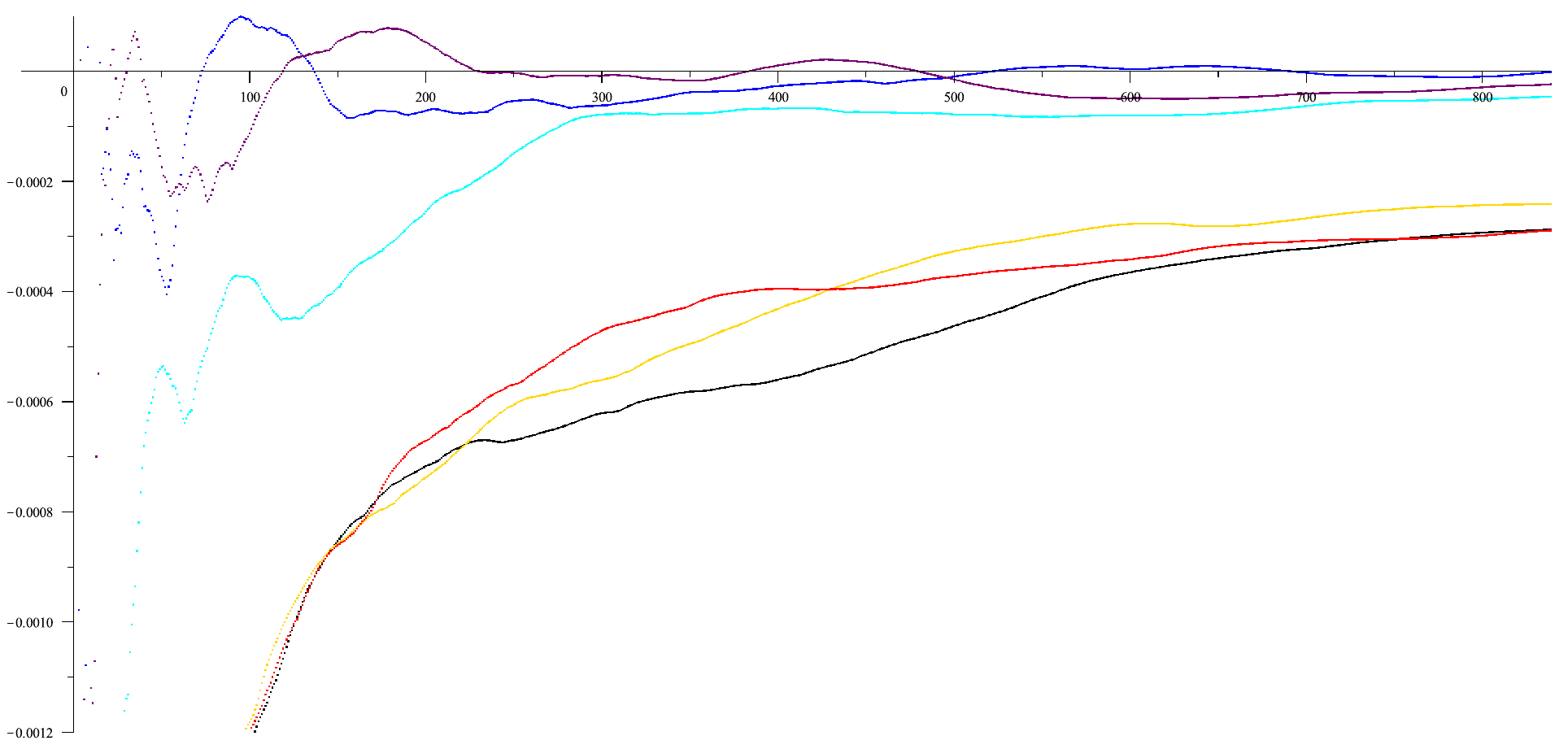}
\caption[Plot of $\log{f(\bar{p}_k;7,a)}$]{Plot of $\log{f(\bar{p}_k;7,a)}$. The black plot corresponds to $a = 1$, the yellow plot to $a = 2$, the red plot to $a = 4$, the blue plot to $a = 3$, the purple plot to $a = 5$, and the cyan plot to $a = 6$.} 
  \end{minipage}
  \hfill
  \begin{minipage}[t]{0.49\textwidth}
    \includegraphics[width=\textwidth]{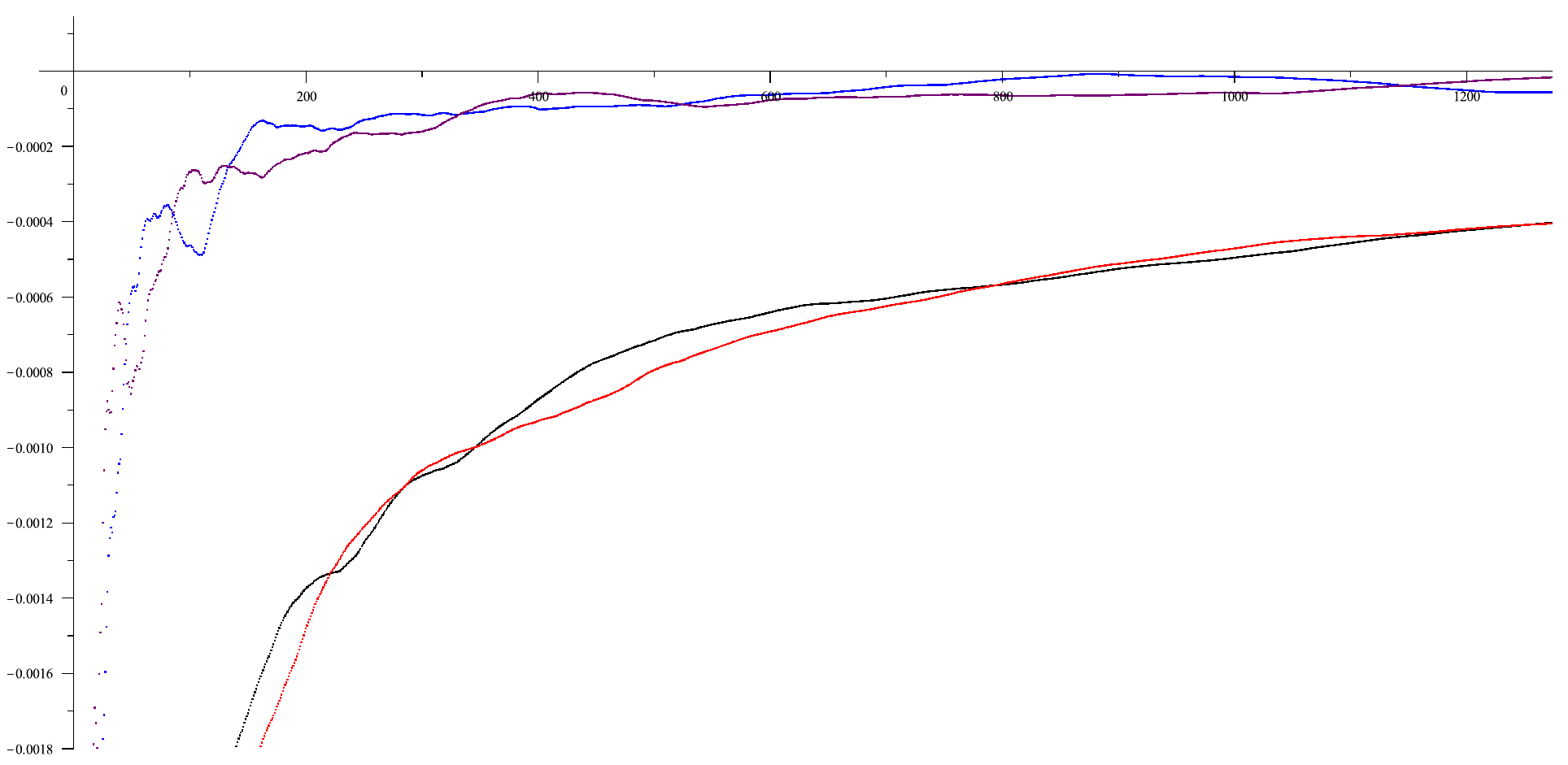}
\caption[Plot of $\log{f(\bar{p}_k;10,a)}$]{Plot of $\log{f(\bar{p}_k;10,a)}$. The black plot corresponds to $a = 1$, the red plot to $a = 9$, the blue plot to $a = 3$, and the purple plot to $a = 7$.} 
  \end{minipage}
\end{figure}

In this paper we study Question \ref{GQ} by examining the function $\log{f(x;q, a)}$ along the line of approach developed by Nicolas for the case $q=1$ in \cite{nicolas1983}. Among other results we propose an answer regarding the different behaviour of $\log{f(x;q, a)}$ when $a$ is not a square modulo $q$. We will study the function $\log f(x;q,a)$ by appealing to the behavior of Dirichlet $L$-functions $L(s,\chi)$ arising from Dirichlet characters $\chi \imod{q}$. As with the Riemann zeta function, we say that the nontrivial zeroes of $L(s, \chi)$ are located on the critical strip $0<\Re(s)<1$. The relevant analogue of the Riemann Hypothesis for Dirichlet $L$-functions mod $q$ is the following.

\begin{hypothesis}[{{GRH}${}_q$}]\label{GRHq}
 For fixed $q$ and all Dirichlet characters $\chi$  modulo $q$, all
 the nontrivial zeroes of $L(s, \chi)$ have real part $1/2$.
\end{hypothesis}
It is known that $$\zeta_{{\mathbb{Q}}(e^{2\pi i/q})}(s)=\prod_{\chi \imod{q}} L(s, \chi^\prime),$$ where $\zeta_{{\mathbb{Q}}(e^{2\pi i/q})}(s)$ is the Dedekind zeta function of the cyclotomic field ${{\mathbb{Q}}(e^{2\pi i/q})}$ and $\chi^\prime$ is the primitive Dirichlet character that induces the Dirichlet character $\chi$ modulo $q$ (see \cite[Theorem 65, p. 296]{FM}). Since the nontrivial zeroes of $\zeta_{{\mathbb{Q}}(e^{2\pi i/q})}(s)$ are located in the critical strip $0<\Re(s)<1$, Hypothesis \ref{GRHq} (GRH${}_q$) is equivalent to  the statement of the Generalized Riemann Hypothesis (GRH) for $\zeta_{{\mathbb{Q}}(e^{2\pi i/q})}(s)$ (i.e., all the nontrivial zeroes of $\zeta_{{\mathbb{Q}}(e^{2\pi i/q})}(s)$ are located on the line $\Re(s)=1/2$). Also note that GRH${}_q$ is equivalent to the statement that all the singularities of the function 
 \[\mathscr{L}(s;q,1) = \sum_{\chi \imod{q}}  \frac{L'}{L}(s,\chi)\]
 on the critical strip are located on the line $\Re(s)=1/2$.

An important function in this article is the following linear combination of logarithmic derivatives of the Dirichlet $L$-functions $L(s, \chi)$. For $q$ and $a$, fixed coprime integers, we set
 \[\mathscr{L}(s;q,a) = \sum_{\chi \imod{q}} \overline{\chi}(a) \frac{L'}{L}(s,\chi).\]

The function $\mathscr{L}(s;q,a)$ has a potential singularity at $s$ when $s$ is a nontrivial zero of a Dirichlet $L$-function corresponding to a character modulo $q$. We formulate the following Singularity Hypothesis (SH${}_{q, a}$) for $\mathscr{L}(s;q,a)$.

\begin{hypothesis}[SH${}_{q, a}$]\label{zeroconj}
 Let $q$ and $a$ be fixed coprime integers. Then the singularities of  $\mathscr{L}(s;q,a)$ on the critical strip $0<\Re(s)<1$ are located on the line $\Re(s)=1/2$.
\end{hypothesis}
Note that GRH${}_q$ implies SH${}_{q, a}$, however SH${}_{q, a}$ for $a\neq 1$  does not eliminate the existence of the nontrivial zeroes $\rho$ of $L(s, \chi)$ with $\Re(\rho)\neq 1/2$.  More precisely, SH${}_{q, a}$ implies that if there exists a nontrivial zero $\rho$ of $L(s,\chi)$ for some $\chi$ modulo $q$ for which $\Re(\rho)$ is not ${1}/{2}$, then
\[
  \sum_{\chi \imod{q}} \overline{\chi}(a) m_\rho(\chi) = 0,
\]
where $m_\rho(\chi)$ is the multiplicity of the zero $\rho$ of $L(s,\chi)$. For $a = 1$, SH${}_{q, 1}$ is equivalent to GRH${}_{q}$.

It comes as a consequence of the work of Platt \cite[Theorem 10.1, Theorem 10.2]{platt2016} that for $L$-functions associated with primitive Dirichlet characters of modulus $q$ less than 400,000, there are no zeroes on the interval (0,1). Moreover such $L$-functions have zeroes on the line $\Re(s)=1/2$. Hence, by the definition of $\mathscr{L}(s;q,a)$, we have the following corollary.

\begin{corollary}\label{plattwow}
 For all  pairs $(q, a)$ with  $q \leq 400,000$ and $a$ coprime to $q$, the function $\mathscr{L}(s;q,a)$ has no singularities on the interval $(0,1)$ and has singularities on the line $\Re(s)=1/2$. 
\end{corollary}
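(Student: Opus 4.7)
The plan is to deduce Corollary \ref{plattwow} directly from the definition of $\mathscr{L}(s;q,a)$ together with Platt's computational results on zeros of Dirichlet $L$-functions, by tracking how zeros of the individual $L(s,\chi)$ for $\chi$ modulo $q$ produce singularities of the character sum. The entire argument is really a reduction step (passing from induced characters modulo $q$ to primitive characters of conductor dividing $q$) followed by the invocation of Platt's Theorems 10.1 and 10.2.

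First I would observe that, inside the critical strip $0<\Re(s)<1$, the only possible singularities of $\mathscr{L}(s;q,a)=\sum_{\chi \imod{q}} \overline{\chi}(a)\frac{L'}{L}(s,\chi)$ occur at nontrivial zeros of the individual $L(s,\chi)$, since each logarithmic derivative is meromorphic there with simple poles precisely at the zeros of $L(s,\chi)$, with residues equal to the multiplicities. Every character $\chi$ modulo $q$ is induced from a primitive character $\chi^\prime$ of some conductor $q^\prime \mid q$, and the two $L$-functions are related by $L(s,\chi)=L(s,\chi^\prime)\prod_{p\mid q,\,p\nmid q^\prime}(1-\chi^\prime(p)p^{-s})$. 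The finite Euler product on the right has zeros only on $\Re(s)=0$, so the nontrivial zeros of $L(s,\chi)$ and $L(s,\chi^\prime)$ coincide inside the critical strip. Since $q^\prime\leq q\leq 400{,}000$, Platt's Theorem 10.1 implies that no such $L(s,\chi^\prime)$ vanishes on $(0,1)$, and therefore neither does any $L(s,\chi)$. This yields the first half of the corollary: $\mathscr{L}(s;q,a)$ has no singularities on $(0,1)$.

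For the second half, Platt's Theorem 10.2 produces explicit zeros of each primitive $L(s,\chi^\prime)$ of conductor at most $400{,}000$ on the line $\Re(s)=1/2$. Such a zero $\rho$ is a pole of $\frac{L'}{L}(s,\chi)$ with residue $m_\rho(\chi)$, so it contributes a residue of $\sum_\chi \overline{\chi}(a)m_\rho(\chi)$ to $\mathscr{L}(s;q,a)$. The main obstacle is to rule out the possibility that this total residue vanishes at \emph{every} zero on the critical line, i.e., to exhibit at least one $\rho$ at which the cancellation identity $\sum_\chi \overline{\chi}(a)m_\rho(\chi)=0$ fails. For $a=1$ this is immediate, since every summand $m_\rho(\chi)$ is a non-negative integer with at least one strictly positive; equivalently, $\mathscr{L}(s;q,1)=\zeta_K^\prime/\zeta_K(s)$ with $K=\mathbb{Q}(e^{2\pi i/q})$ has a pole at every zero of $\zeta_K$ on $\Re(s)=1/2$, and such zeros certainly exist. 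For general $a$ coprime to $q$, I would invoke Platt's explicit numerical data: the low-lying zeros of the various primitive $L(s,\chi^\prime)$ of conductor up to $400{,}000$ are simple and distinct across characters, which forces the $\varphi(q)$-term cancellation to fail for at least one of them and hence produces a genuine singularity of $\mathscr{L}(s;q,a)$ on $\Re(s)=1/2$.
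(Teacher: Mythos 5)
Your argument matches the paper's intended (and very terse) justification: both reduce to Platt's computational results after observing that the zeros of $L(s,\chi)$ inside the critical strip coincide with those of the primitive $L(s,\chi')$, since the finite Euler product relating them vanishes only on $\Re(s)=0$. You also helpfully address why the residue $\sum_{\chi} \overline{\chi}(a)\,m_\rho(\chi)$ cannot vanish at \emph{every} zero on $\Re(s)=1/2$, a point the paper passes over silently but which, as you note, relies on the distinctness of low-lying zeros across characters being visible in Platt's data rather than being literally stated in the two cited theorems.
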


Our next result states  that it is possible to resolve Question \ref{GQ} under certain assumptions on the singularities of $\mathscr{L}(s;q,a)$. 

\begin{theorem}\label{BigUncond}
If $\mathscr{L}(s;q,a)$ has a singularity $\rho$ for which $0 < \Re(\rho) < 1$, but does not have singularities on the interval $(0,1)$, then there exists a sequence of $x$ that tends to infinity for which $\log{f(x;q,a)} < 0$. 
\end{theorem}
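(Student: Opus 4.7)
The plan is to adapt Nicolas's method for $q=1$ \cite{nicolas1983} to the Dirichlet-$L$ setting and to prove the theorem by contradiction using Landau's oscillation theorem.

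First, I derive an asymptotic formula for $\log f(x;q,a)$ expressing it in terms of the error term $R(t;q,a) := \theta(t;q,a) - t/\varphi(q)$ of the prime number theorem for the progression $a \imod q$. Taylor-expanding $\log\log(\varphi(q)\theta(x;q,a))$ about $\log\log x$, applying partial summation to $\sum_{\bar{p}\leq x} 1/\bar{p}$, and combining with Theorem \ref{mertensap} together with the absolutely convergent prime-power correction $\sum_{\bar p}\sum_{m\geq 2}(m\bar p^m)^{-1}$, one finds that the ``diagonal'' contribution $R(x;q,a)/(x\log x)$ cancels between the Taylor and Mertens pieces, leaving
\[
\log f(x;q,a) \;=\; \int_x^\infty \frac{R(t;q,a)}{t^2\log t}\,dt \;+\; O\!\left(\tfrac{1}{x}\right).
\]

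Second, Perron's formula applied to the identity $\sum_{n\equiv a \imod q} \Lambda(n) n^{-s} = -\mathscr{L}(s;q,a)/\varphi(q)$ represents $R(t;q,a)$ as a sum over the singularities $\rho$ of $\mathscr{L}(s;q,a)$ in the critical strip, weighted by $m_\rho(q,a) := \sum_{\chi \imod q}\bar\chi(a) m_\rho(\chi)$. Substituting this into the displayed formula and integrating term by term yields
\[
\log f(x;q,a) \;=\; -\frac{1}{\varphi(q)\log x}\sum_\rho \frac{m_\rho(q,a)\,x^{\rho-1}}{\rho(1-\rho)} \;+\; (\text{lower order}),
\]
so that the sign of $\log f(x;q,a)$ is governed by an oscillating sum over singularities of $\mathscr{L}(s;q,a)$.

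Third, assume for contradiction that $\log f(x;q,a) \geq 0$ for all $x \geq x_0$. Consider the Mellin transform of (a suitable positive multiple of) $\log f(x;q,a)$ on $[x_0,\infty)$; by the representation just obtained, this transform extends meromorphically to a domain containing $(0,1)$ on the real axis, with singularities in the critical strip matching (after a trivial shift) those of $\mathscr{L}(s;q,a)$. Landau's oscillation theorem applied to the nonnegativity of $\log f(x;q,a)$ forces the abscissa of convergence $\sigma_0$ to be a real singularity. The first hypothesis places $\sigma_0$ strictly inside $(0,1)$; the second hypothesis rules out any real singularity of $\mathscr{L}(s;q,a)$ there. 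This is a contradiction, so $\log f(x;q,a) < 0$ must hold for a sequence of $x$ tending to infinity.

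The main obstacle is the Landau step: the auxiliary Mellin transform must be set up so that its singularities in $(0,1)$ exactly reflect those of $\mathscr{L}(s;q,a)$, without spurious cancellations arising from the weights $m_\rho(q,a)$ or singularities being concealed in a holomorphic remainder, and the abscissa of convergence must be located inside $(0,1)$. A related technical issue is the rigorous termwise substitution of Perron's formula into the tail integral of Step 1, which requires careful control of truncation and convergence of the sum over zeros.
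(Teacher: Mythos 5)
Your overall strategy (Mellin transform plus Landau's oscillation theorem) is the same as the paper's, and Step 1 is essentially Proposition \ref{logup} of the paper. But there is a genuine gap that stems from a conflation of $\theta$ and $\psi$, and this gap is exactly where the real content of the theorem lies.

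You define $R(t;q,a):=\theta(t;q,a)-t/\varphi(q)$ in Step 1 (the paper's $S$), and this is correct for the identity
\[
\log f(x;q,a)=\int_x^\infty \frac{S(t;q,a)}{t^2\log t}\,dt + O\!\left(\tfrac{1}{x}\right),
\]
which comes from partial summation of $\sum_{\bar p\le x}1/\bar p$ (so only primes, hence $\theta$). But in Step 2 you invoke Perron's formula for $\sum_{n\equiv a}\Lambda(n)n^{-s}$, which is the Mellin transform of $\psi(t;q,a)$, not $\theta(t;q,a)$. The correction term $\psi(t;q,a)-\theta(t;q,a)$ is not absorbed into ``lower order'': by the paper's Proposition \ref{transfer}, it is $\sim\tfrac{\mathcal R_{q,a}}{\varphi(q)}t^{1/m}$ where $m=\mathrm{Ind}_q(a)$. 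When $a$ is a square mod $q$ (so $m=2$), this is $t^{1/2}$, i.e.\ \emph{the same order} as the oscillatory zero-sum when all singularities of $\mathscr L(s;q,a)$ lie on $\Re(s)=1/2$.

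This wrecks Step 3. The Mellin transform of $K(x;q,a)=\int_x^\infty S(t)g(t)\,dt$ inherits a genuine real branch-point singularity at $s=1/m$ from the $t^{1/m}$ correction. Landau's theorem only says the abscissa of convergence $\sigma_0$ is a real singularity of the transform; with the real singularity at $s=1/m$ present, you cannot conclude $\sigma_0\le 0$ and hence cannot derive a contradiction from the complex singularity $\rho$. Concretely, if $m=2$ and $\Theta(q,a)=1/2$ (the GRH-like case), Landau would happily allow $\sigma_0=1/2$, and no contradiction arises.

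The paper's proof avoids this entirely. It applies the Landau argument to $J(x;q,a)=\int_x^\infty R(t)g(t)\,dt$ with $R=\psi-t/\varphi(q)$ (Theorem \ref{omegaJ}); since the transform of $J$ has no real singularities on $(0,1)$, Landau does force the contradiction, and $J$ oscillates in sign. To transfer from $J$ to $\log f$, the paper uses a one-sided inequality, not another Mellin argument: by Proposition \ref{transferup},
\[
\log f(x;q,a)=J(x;q,a)-\frac{\mathcal R_{q,a}}{\varphi(q)}\,\frac{m}{(m-1)x^{(m-1)/m}\log x}+O\!\left(\frac{1}{x^{(m-1)/m}\log^2 x}\right),
\]
and the explicit correction term is strictly negative, so $\log f(x;q,a)<J(x;q,a)$ for large $x$. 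Combined with the $\Omega_-$ conclusion for $J$, this gives exactly the one-sided conclusion ``$\log f<0$ on a sequence tending to infinity,'' which is all Theorem \ref{BigUncond} asserts. You need both ingredients — tracking the $\psi$-versus-$\theta$ discrepancy and then using its sign — neither of which appears in your proposal.
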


The following is a direct corollary of Theorem \ref{BigUncond} and Corollary \ref{plattwow}.

\begin{corollary}
Let $q \leq 400,000$. Then there are infinitely many primorials $\overline{N}_k$ in $S_{q,a}$ for which 
\[\frac{\bar{N}_k}{\varphi(\bar{N}_k)(\log(\varphi(q)\log{\bar{N}_k}))^{\frac{1}{\varphi(q)}}} > \frac{1}{C(q,a)}.\]
\end{corollary}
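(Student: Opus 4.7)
The plan is to deduce the corollary directly from Theorem~\ref{BigUncond}, with Corollary~\ref{plattwow} supplying the required hypothesis. Fix any coprime pair $(q,a)$ with $q \leq 400{,}000$. By Corollary~\ref{plattwow}, the function $\mathscr{L}(s;q,a)$ has no singularities on the interval $(0,1)$ yet does possess singularities on the critical line $\Re(s)=1/2$. Any such singularity $\rho$ automatically satisfies $0 < \Re(\rho) < 1$, so the hypotheses of Theorem~\ref{BigUncond} are met; the theorem then yields a sequence $x_n \to \infty$ along which $\log f(x_n;q,a) < 0$.

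It then remains only to transfer this conclusion to primorials. Since $\log f(x;q,a)$ is constant on each interval $[\bar{p}_k, \bar{p}_{k+1})$, the sequence $\{x_n\}$ singles out infinitely many indices $k$ for which $\log f(\bar{p}_k;q,a) < 0$. By the equivalence recorded in \eqref{equivalence}, the inequality $\log f(\bar{p}_k;q,a) < 0$ is exactly
\[\frac{\bar{N}_k}{\varphi(\bar{N}_k)(\log(\varphi(q)\log{\bar{N}_k}))^{\frac{1}{\varphi(q)}}} > \frac{1}{C(q,a)},\]
so this inequality holds for infinitely many $k$, which is the desired conclusion.

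Since both inputs are already in hand, no serious obstacle arises: the argument is purely a matter of assembly, and the substantive work lives in Theorem~\ref{BigUncond} (which produces the sequence from analytic information about $\mathscr{L}(s;q,a)$) and in Platt's rigorous zero computations that underlie Corollary~\ref{plattwow}. The only point meriting a moment's care is to verify that the singularity of $\mathscr{L}(s;q,a)$ on $\Re(s)=1/2$ needed to invoke Theorem~\ref{BigUncond} is a genuine singularity of the linear combination (rather than merely a zero of some individual $L(s,\chi)$ that could be cancelled by contributions from other characters), but this is exactly the content guaranteed by Corollary~\ref{plattwow}.
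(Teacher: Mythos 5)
Your proof is correct and follows exactly the route the paper intends: the corollary is stated in the paper as a direct consequence of Theorem~\ref{BigUncond} and Corollary~\ref{plattwow}, with the implicit translation from $\log f(\bar{p}_k;q,a) < 0$ to the displayed inequality via~\eqref{equivalence} being the only remaining step, which you spell out correctly. Your closing remark distinguishing a genuine singularity of $\mathscr{L}(s;q,a)$ from a mere zero of some individual $L(s,\chi)$ is a worthwhile observation, and you are right that Corollary~\ref{plattwow} is precisely what settles it.
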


We will write $\Theta:= \Theta (q, a)$ to denote the supremum of the real parts of the singularities of $\mathscr{L}(s;q,a)$ in the strip $0 < \Re(s) < 1$.  By appealing to the functional equations of Dirichlet $L$-functions, it is straightforward to show that the falsehood of SH${}_{q,a}$ is equivalent to $\Theta>1/2$. 
The following theorem provides more precise answers to Question \ref{GQ} under certain assumptions on the singularities of $\mathscr{L}(s;q,a)$.

\begin{theorem}\label{big5}
(a)  Suppose Hypothesis \ref{zeroconj}  (SH${}_{q, a}$) is false. For $1- \Theta < b < \frac{1}{2}$ assume that 
$\mathscr{L}(s;q,a)$ has no singularities on the segment $(1-b,1)$. Then, we have 
 \[\log f(x;q,a) = \Omega_{\pm}(x^{-b}),\]
 as $x$ tends to infinity.\footnote{See Notation \ref{notation} at the end of this section for a definition of the $\Omega$ notation.}
 
(b) For integer $a$ that is not a square modulo $q$, suppose Hypothesis \ref{zeroconj} (SH${}_{q, a}$) is true and $\mathscr{L}(s;q,a)$ does have a singularity on the line $\Re(s)=1/2$ and does not have a singularity at $s=1/2$. Then, for  $\frac{1}{2} < b < \frac{2}{3}$,
 \[\log f(x;q,a) = \Omega_{\pm}(x^{-b}),\]
as $x$ tends to infinity. 

\end{theorem}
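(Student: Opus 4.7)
The plan is to follow the strategy of Nicolas \cite{nicolas1983} adapted to arithmetic progressions: express $\log f(x;q,a)$ as a contour integral whose singular behaviour is governed by $\mathscr{L}(s;q,a)$, and then upgrade residue contributions from the relevant singularities into $\Omega_\pm$ bounds via a Landau-type oscillation argument.

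\medskip

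\textbf{Step 1 (Integral representation).} Starting from
\[\log f(x;q,a) = \frac{\log\log(\varphi(q)\theta(x;q,a))}{\varphi(q)} + \sum_{\bar p \le x}\log(1-1/\bar p) - \log C(q,a),\]
I would expand $\log(1-1/\bar p) = -\sum_{m\ge 1} 1/(m\bar p^m)$, use Mertens' theorem (Theorem \ref{mertensap}) together with \eqref{pntap}, and apply partial summation to obtain the leading-order expression
\[\log f(x;q,a) = \frac{\theta(x;q,a) - x/\varphi(q)}{\varphi(q)\, x \log x} + T(x;q,a) + o\bigl(x^{-1}\bigr),\]
where $T(x;q,a)$ collects the residual contribution of the $m\ge 2$ prime-power tails and the higher-order Taylor corrections in the $\log\log$ factor. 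Invoking the explicit formula for $\theta(t;q,a)$ in terms of the nontrivial zeros of $L(s,\chi)$ modulo $q$, together with the definition of $\mathscr{L}(s;q,a)$, then produces a Mellin--Perron representation of the form
\[\log f(x;q,a) = \frac{1}{2\pi i\,\varphi(q)}\int_{(c)} \mathscr{L}(1-s;q,a)\, K(s)\, x^{-s}\, ds \;+\; T(x;q,a),\]
valid for a suitable bounded kernel $K(s)$ and small $c > 0$, the pole of $L(s,\chi_0)$ at $s=1$ being cancelled by $\log C(q,a)$ via Theorem \ref{mertensap}.

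\medskip

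\textbf{Step 2 (Part (a), singularity off the critical line).} Under the hypothesis of (a), $\mathscr{L}$ has a singularity $\rho$ with $\Re(\rho) > 1-b$ and none on $(1-b,1)$. In the above representation, $\mathscr{L}(1-s;q,a)$ has a corresponding singularity at $s_\rho := 1-\rho$ with $\Re(s_\rho) < b$. Shifting the contour rightward past $\Re(s) = b$ picks up a residue contribution of magnitude $x^{-(1-\Re(\rho))} > x^{-b}$. To translate this into a two-sided oscillation, I would apply Landau's oscillation theorem: if $\log f(x;q,a) + c_1 x^{-b}$ were of one sign for all large $x$, its Mellin transform would be analytic on $\Re(s) > 1-b$ apart from a simple pole at $s=-b$ contributed by the $c_1 x^{-b}$ term, contradicting the singularity of $\mathscr{L}(1-s;q,a)$ at $s_\rho$. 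The symmetric argument with the opposite sign yields $\log f(x;q,a) = \Omega_\pm(x^{-b})$.

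\medskip

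\textbf{Step 3 (Part (b), non-square $a$ and a singularity on $\Re(s)=1/2$).} Under SH${}_{q,a}$, all singularities of $\mathscr{L}$ in the critical strip lie on $\Re(s)=1/2$, so the singularities of $\mathscr{L}(1-s;q,a)$ in the integrand lie on $\Re(s)=1/2$. The hypothesized singularity at $\rho = 1/2 + i\gamma_0 \ne 1/2$ thus yields, after shifting the contour past $\Re(s)=1/2$, an oscillating residue of magnitude $x^{-1/2}$. The crucial role of the non-square hypothesis on $a$ appears in $T(x;q,a)$: the $m=2$ Chebyshev-bias term, arising from primes $p$ with $p^2\equiv a\imod q$, is proportional to $N_2(a;q) := \#\{b\imod q : b^2 \equiv a\}$, and this number vanishes precisely when $a$ is not a square modulo $q$. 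With no constant-sign $x^{-1/2}$ Chebyshev-bias competing against it, the oscillation from $\rho$ is visible. The residual $m\ge 3$ bias contributions are of size $x^{-2/3}$ and absorbed into the $O(x^{-b})$ error for any $b<2/3$, which is what forces the upper endpoint of the stated range. Applying the Landau argument of Step 2 now to the singularity $\rho=1/2+i\gamma_0$ yields $\log f(x;q,a)=\Omega_\pm(x^{-b})$ for every $1/2 < b < 2/3$.

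\medskip

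The main obstacle will be the precise bookkeeping of Step 1: producing an integral representation clean enough that the singularities of $\mathscr{L}$ are transferred faithfully to oscillations of $\log f$, and, in part (b), the interplay between the non-square hypothesis on $a$ and the Chebyshev-bias contribution of the $m=2$ tail can be cleanly isolated at the $x^{-1/2}$ scale. Once the representation is established, the Landau-type oscillation arguments in Steps 2 and 3 are essentially standard.
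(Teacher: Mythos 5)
Your high-level plan (Landau oscillation theorem for the singularities of $\mathscr{L}$, plus the observation that non-square $a$ pushes the prime-power bias to order $x^{-2/3}$) matches the paper's strategy, and your Step~3 correctly pinpoints why $\mathrm{Ind}_q(a)\ge 3$ is what makes the upper endpoint $b<2/3$ appear. However, Step~1 contains a genuine error that would derail the argument as written.

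The claimed ``leading-order expression''
\[
\log f(x;q,a) = \frac{\theta(x;q,a) - x/\varphi(q)}{\varphi(q)\, x \log x} + T(x;q,a) + o\bigl(x^{-1}\bigr)
\]
misidentifies the dominant term. When one does the partial summation carefully (as in Proposition~\ref{logup}), the pointwise boundary term $S(x;q,a)/(x\log x)$ coming from $\sum_{\bar p\le x}1/\bar p$ cancels, up to $O\bigl(1/(x\log^2 x)\bigr)$, against the difference $\log\log(\varphi(q)\theta(x;q,a))-\log\log x$ via the mean value theorem. What survives as the main term is the \emph{tail integral}
\[
K(x;q,a)=\int_x^\infty S(t;q,a)\,g(t)\,dt, \qquad g(t)=\frac{1+\log t}{t^2\log^2 t},
\]
and the correct identity is $\log f(x;q,a)=K(x;q,a)+O(1/x)$. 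Your $T(x;q,a)$, which you describe as collecting only the $m\ge 2$ prime-power tails and the Taylor corrections, in fact has to carry the entire tail integral (minus the boundary term you kept), and under the hypothesis of part~(a) that quantity is $\Omega_\pm(x^{\Theta-1})$ --- of the very same size as the oscillation you are trying to detect. So the decomposition is not ``main term plus small error''; the proof cannot be closed from your Step~1. (There is also a small slip in Step~2: the Mellin transform of the compensating term $x^{-b}$ contributes a pole at $s=1-b$, not $s=-b$.)

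The fix is to run the Landau argument on the tail integral rather than on the pointwise error: the paper shows the Laplace-type transform $\int_{\bar p_1}^\infty J(x;q,a)\,x^{-s}\,dx$ (where $J$ is the analogous tail integral for $\psi$) extends holomorphically across the real segment $(1-b,1)$ but inherits a genuine pole at any off-line singularity $\rho$ of $\mathscr{L}$, which forces $J(x;q,a)=\Omega_\pm(x^{-b})$ (Theorem~\ref{omegaJ2}); combining with $\log f = J - \tfrac{\mathcal{R}_{q,a}}{\varphi(q)}\tfrac{m}{(m-1)x^{(m-1)/m}\log x} + O\bigl(x^{-(m-1)/m}\log^{-2}x\bigr)$ from Proposition~\ref{transferup} then gives both parts, with $b<1/2$ and $b<2/3$ matching $(m-1)/m\ge 1/2$ and $\ge 2/3$ respectively. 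Your Steps~2 and~3 contain these ideas; Step~1 needs to be rebuilt around the tail integral before they can be applied.
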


Combining the results of part (a) and (b) of the above theorem for $a$ not a square modulo $q$ with Corollary \ref{plattwow} yields the following.

\begin{corollary}\label{wow2} Let $q\leq 400,000$ and let $a$ not be a square modulo $q$. Then there are infinitely many $k \in \N$ for which 
\begin{equation}\label{intronico}
\frac{\bar{N}_k}{\varphi(\bar{N}_k)(\log(\varphi(q)\log{\bar{N}_k}))^{\frac{1}{\varphi(q)}}} > \frac{1}{C(q,a)}
\end{equation}
and also infinitely many $k \in \N$ for which \eqref{intronico} does not hold.
\end{corollary}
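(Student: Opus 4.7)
The strategy is to apply Theorem \ref{big5} in either of the two possible scenarios for the Singularity Hypothesis SH${}_{q,a}$, using Corollary \ref{plattwow} to verify the auxiliary hypotheses for $q \leq 400{,}000$. Corollary \ref{plattwow} tells us two things: $\mathscr{L}(s;q,a)$ has no singularities anywhere on the real segment $(0,1)$ (so in particular none at $s=1/2$ and none on any subinterval $(1-b,1) \subset (0,1)$), and it does have a singularity on the critical line $\Re(s)=1/2$.

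If SH${}_{q,a}$ is false, then $\Theta>1/2$ by the equivalence recorded before Theorem \ref{big5}, so one may choose $b$ with $1-\Theta<b<1/2$; the required no-singularities-on-$(1-b,1)$ hypothesis follows from Corollary \ref{plattwow}, and Theorem \ref{big5}(a) yields $\log f(x;q,a) = \Omega_\pm(x^{-b})$. If instead SH${}_{q,a}$ is true, then since $a$ is not a square modulo $q$ and Corollary \ref{plattwow} supplies both remaining inputs to Theorem \ref{big5}(b), namely the presence of a singularity on $\Re(s)=1/2$ and the absence of one at $s=1/2$, we conclude for any $b \in (1/2,2/3)$ that $\log f(x;q,a) = \Omega_\pm(x^{-b})$. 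Thus, irrespective of the status of SH${}_{q,a}$, the same $\Omega_\pm$ conclusion holds (with a possibly different choice of $b$).

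Now $\log f(x;q,a) = \Omega_\pm(x^{-b})$ with $x^{-b}>0$ forces $\log f(x;q,a)$ to be positive on a sequence $x_m \to \infty$ and negative on another sequence $y_m \to \infty$. By the identity \eqref{equivalence}, $\log f(x;q,a)$ is constant on each interval $[\bar{p}_k,\bar{p}_{k+1})$, and its sign there is precisely equivalent to whether \eqref{GeneralIneq}---i.e.\ \eqref{intronico}---holds at $\bar{N}_k$. Consequently, there are infinitely many $k$ for which \eqref{intronico} holds and infinitely many $k$ for which it fails. The only delicate point is handling the case split on SH${}_{q,a}$ cleanly: since we take no position on whether SH${}_{q,a}$ is true, the argument must be uniform in the dichotomy, and the corollary becomes unconditional precisely because Theorem \ref{big5}(a) and (b) between them cover both possibilities, with Corollary \ref{plattwow} furnishing the extra hypotheses in each branch.
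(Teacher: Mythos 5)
Your proposal is correct and follows exactly the route the paper intends, which it compresses into the single remark ``Combining the results of part (a) and (b) of the above theorem for $a$ not a square modulo $q$ with Corollary~\ref{plattwow} yields the following''; you have simply unpacked the dichotomy on SH${}_{q,a}$ that the paper leaves implicit, checked in each branch that Corollary~\ref{plattwow} supplies the needed hypotheses of Theorem~\ref{big5}(a) or (b), and then translated $\Omega_{\pm}$ oscillation of $\log f$ back to the primorial inequality via the step-function identity.
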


Theorem \ref{big5} provides a satisfactory answer to Question \ref{GQ} when $a$ is not a square modulo $q$ and for a square $a$ mod $q$ when SH${}_{q, a}$ is false. It remains to study Question \ref{GQ} when  $a$ is a square modulo $q$ under the assumption of the truth of SH${}_{q, a}$. We study this case by developing an explicit formula for $\log f(x;q,a)$. We start with some notation.

Let $\mathrm{Ind}_q(a)$,  the \emph{index of $a \imod{q}$}, be the least natural number $n >1$ for which $a$ is an $n$-th power modulo $q$
and let \[\mathcal{R}_{q,a} = \#\{b \in {\left(\Z/q\Z\right)^\times}; ~ b^{\mathrm{Ind}_q(a)} \equiv a \imod{q}\}.\] 
Observe that since $q$ and $a$ are coprime, we have $a^{\varphi(q)+1} \equiv a \imod{q}$.  Therefore, $2 \leq \mathrm{Ind}_q(a) \leq \varphi(q)+1$ and thus $\mathrm{Ind}_q(a)$ and $\mathcal{R}_{q,a}$ are well-defined. Also note that $a$ is a square modulo $q$ if and only if $\mathrm{Ind}_q(a)=2$.

With the additional notation 
\[\mathcal{Z}(\chi) = \{ \rho \in \C \text{ ; } L(\rho,\chi) = 0 \text{, } \Re(\rho) \geq 0 \text{ and } \rho \neq 0 \},\]
where as before $\chi'$ denotes the primitive Dirichlet character which induces the Dirichlet character $\chi$, we have the following explicit formula for $\log{f(x;q,a)}$.

\begin{theorem}\label{oscillation}
 Assume Hypothesis \ref{zeroconj} (SH${}_{q, a}$). Writing $m = \mathrm{Ind}_q(a)$, we have
\begin{equation}\label{bigequal}
\log{f(x;q,a)} = \frac{1}{\varphi(q)}\left(\frac{1}{\sqrt{x}\log{x}} \sum_{\chi \imod{q}} \overline{\chi}(a) \sum_{\rho \in \mathcal{Z}(\chi')} \frac{x^{i\Im(\rho)}}{\rho(\rho -1)} - \frac{m\cdot\mathcal{R}_{q,a}}{(m-1)x^{\frac{m-1}{m}}\log{x}}\right)
+ \bigO\left(\frac{1}{\sqrt{x}\log^2{x}}\right).
\end{equation}
\end{theorem}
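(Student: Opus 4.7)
Define $E_\theta(t) := \theta(t;q,a) - t/\varphi(q)$ and $B_1(q,a) := \sum_{\bar p}\sum_{k\geq 2}\frac{1}{k\bar p^k}$. The strategy is to reduce $\log f(x;q,a)$ to an integral involving $E_\theta(t)$, substitute the explicit formula arising from $\mathscr L(s;q,a)$, and evaluate term by term. Expanding $\log(1 - 1/\bar p) = -1/\bar p - \sum_{k\geq 2}1/(k\bar p^k)$ and noting that $\sum_{\bar p \leq x}\sum_{k\geq 2}1/(k\bar p^k) = B_1(q,a) + O(1/x)$ by the prime number theorem for arithmetic progressions, I would first obtain
\[\log f(x;q,a) = \frac{\log\log(\varphi(q)\theta(x;q,a))}{\varphi(q)} - \sum_{\bar p\leq x}\frac{1}{\bar p} - B_1(q,a) - \log C(q,a) + O\Bigl(\frac{1}{x}\Bigr).\]
Then I would apply Abel summation to $\sum_{\bar p \leq x}1/\bar p$ against $d\theta(t;q,a)$. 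Splitting $\theta(t;q,a) = t/\varphi(q) + E_\theta(t)$, the main piece contributes $(\log\log x)/\varphi(q) + \text{const}$ via the substitution $u = \log t$, while the error piece yields $E_\theta(x)/(x\log x) + \text{const} - \int_x^\infty E_\theta(t)(\log t + 1)/(t^2\log^2 t)\,dt$, the tail integral converging absolutely under $\mathrm{SH}_{q,a}$ since $E_\theta(t) = O(\sqrt t\log^2 t)$. This gives
\[\sum_{\bar p \leq x}\frac{1}{\bar p} = \frac{\log\log x}{\varphi(q)} + M(q,a) + \frac{E_\theta(x)}{x\log x} - \int_x^\infty \frac{E_\theta(t)(\log t + 1)}{t^2\log^2 t}\,dt,\]
for the resulting constant $M(q,a)$.

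Comparing with Theorem \ref{mertensap} forces the consistency relation $M(q,a) + B_1(q,a) + \log C(q,a) = 0$. Combined with the Taylor expansion
\[\frac{\log\log(\varphi(q)\theta(x;q,a))}{\varphi(q)} - \frac{\log\log x}{\varphi(q)} = \frac{E_\theta(x)}{x\log x} + O\Bigl(\frac{\log^3 x}{x}\Bigr),\]
(the remainder controlled by the $\mathrm{SH}_{q,a}$ bound on $E_\theta(x)$), the two copies of $E_\theta(x)/(x\log x)$ cancel, leaving
\[\log f(x;q,a) = \int_x^\infty\frac{E_\theta(t)(\log t + 1)}{t^2\log^2 t}\,dt + O\Bigl(\frac{1}{\sqrt x\log^2 x}\Bigr).\]

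Next I would apply Perron's formula to $\mathscr L(s;q,a) = -\varphi(q)\sum_{n \equiv a\imod q}\Lambda(n)n^{-s}$. Under $\mathrm{SH}_{q,a}$ the singularities of $\mathscr L(s;q,a)$ in the critical strip lie on $\Re(s) = 1/2$, and additional poles at $s = 1/k$ ($k \geq m = \mathrm{Ind}_q(a)$) arise from the prime-power contributions. Combining the resulting explicit formula for $\psi(x;q,a)$ with the PNT-for-AP estimate
\[\psi(x;q,a) - \theta(x;q,a) = \frac{\mathcal R_{q,a}}{\varphi(q)}x^{1/m} + O\bigl(x^{1/(m+1)}\log^2 x\bigr)\]
(summing $\theta(x^{1/m};q,b)$ over the $\mathcal R_{q,a}$ residues $b$ with $b^m \equiv a \imod q$) produces
\[E_\theta(t) = -\frac{1}{\varphi(q)}\sum_{\chi\imod q}\overline\chi(a)\sum_{\rho\in\mathcal Z(\chi')}\frac{t^\rho}{\rho} - \frac{\mathcal R_{q,a}}{\varphi(q)}t^{1/m} + (\text{lower-order terms}).\]

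Substituting this into the integral above and invoking the identity (one integration by parts with $u = 1/\log t$, $dv = t^{\alpha - 2}dt$, valid for $\Re(\alpha) < 1$)
\[\int_x^\infty \frac{t^{\alpha - 2}(\log t + 1)}{\log^2 t}\,dt = -\frac{x^{\alpha - 1}}{(\alpha - 1)\log x} + O\Bigl(\frac{x^{\Re(\alpha) - 1}}{\log^2 x}\Bigr)\]
with $\alpha = \rho$ (so under $\mathrm{SH}_{q,a}$ the off-$1/2$-line zeros' contributions cancel in the $\chi$-sum and $x^{\rho - 1} = x^{i\Im(\rho)}/\sqrt x$ in what survives) and $\alpha = 1/m$ (yielding the bias $-m\mathcal R_{q,a}/(\varphi(q)(m-1)x^{(m-1)/m}\log x)$, while the $k > m$ contributions are absorbed in the error since $x^{1/k - 1} = o(1/(\sqrt x\log x))$) produces the claimed formula. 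The main obstacle is a rigorous justification of the term-by-term integration of the explicit formula: this requires truncating the zero-sum at $|\Im(\rho)| \leq T$, using the absolute convergence of $\sum_\rho 1/|\rho|^2$ together with standard zero-counting bounds to estimate the tail uniformly by $O(1/(\sqrt x\log^2 x))$, and then letting $T \to \infty$.
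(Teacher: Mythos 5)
Your high-level strategy matches the paper's: reduce $\log f(x;q,a)$ to the integral $\int_x^\infty S(t;q,a)g(t)\,dt$ with $g(t) = (\log t + 1)/(t^2\log^2 t)$ (the paper's Proposition \ref{logup}), transfer from $\theta$ to $\psi$ to pick up the bias $\frac{\mathcal R_{q,a}}{\varphi(q)}t^{1/m}$ (Propositions \ref{transfer} and \ref{transferup}), insert an explicit formula, and integrate term by term against $g$. But there are two issues.

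First, a conceptual confusion: you write that ``additional poles at $s=1/k$ \dots arise from the prime-power contributions'' in $\mathscr L(s;q,a)$. This is false --- $\mathscr L(s;q,a) = \sum_\chi\overline\chi(a)\frac{L'}{L}(s,\chi)$ has singularities only at $s=1$ and at zeros of the $L(s,\chi)$; nothing at $s=1/k$. The bias term is not a pole of $\mathscr L$; it comes entirely from the arithmetic identity $\psi(x;q,a) - \theta(x;q,a) = \sum_b\eta_b(m)\theta(x^{1/m};q,b) + \cdots$, which you do eventually invoke, but the earlier framing would mislead anyone following the argument.

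Second, and more seriously, the ``main obstacle'' you flag --- justifying the term-by-term integration of the explicit formula against $g(t)$ --- is where the actual work of the theorem lives, and your sketch does not resolve it. The pointwise explicit formula for $\psi(x;q,a)$ has the conditionally convergent zero sum $\sum_\rho x^\rho/\rho$, and when you plug this into $\int_x^\infty(\cdot)\,g(t)\,dt$ you cannot interchange sum and integral by invoking ``absolute convergence of $\sum_\rho 1/|\rho|^2$'' alone, because the individual terms $\int_x^\infty t^{\rho}g(t)\,dt/\rho$ carry a factor $\sim 1/|\rho|^2$ only after the integration is performed, and the interchange is exactly what needs justification. The paper avoids this by working with McCurley's smoothed explicit formula for $\int_1^x\psi(t;q,a)\,dt$ (Theorem \ref{explicitformula}), whose zero sum $\sum_\rho x^{\rho+1}/(\rho(\rho+1))$ is absolutely convergent from the outset; it then integrates $g'(t)$ against this and does a clean integration by parts (Lemma \ref{ghatprime}). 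The dominated convergence argument in Lemma \ref{ghatprime} also makes essential and explicit use of SH${}_{q,a}$: it is exactly the cancellation $\sum_\chi\overline\chi(a)m_\rho(\chi)=0$ for off-line $\rho$ that lets the paper dominate the truncated sums by an integrable function depending only on the zeros with $\Re(\rho)\in\{0,1/2\}$. Your appeal to $E_\theta(t) = O(\sqrt t\log^2 t)$ ``under SH${}_{q,a}$'' is not justified without first carrying out this cancellation argument inside a truncated formula, so the convergence step is circular as written. If you want to run your version rather than the paper's, you would need to (i) state the truncated explicit formula with explicit error, (ii) use SH${}_{q,a}$ to remove off-line zeros from the truncated $\chi$-weighted sum, (iii) estimate the truncation error uniformly after integration against $g$, and (iv) pass to the limit --- which amounts to reinventing Lemma \ref{ghatprime} with more effort.

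A small additional point: your claimed bound $\psi(x;q,a)-\theta(x;q,a) = \frac{\mathcal R_{q,a}}{\varphi(q)}x^{1/m} + O(x^{1/(m+1)}\log^2 x)$ implicitly assumes GRH-quality error terms for $\theta(x^{1/m};q,b)$ for residues $b$ with $b^m\equiv a$, which does not follow from SH${}_{q,a}$. The paper's Proposition \ref{transfer} only claims $O(x^{1/m}/\log x)$, which is unconditional and suffices here since $(m-1)/m\geq 1/2$.
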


For $m = 2$, we observe that the negative term in \eqref{bigequal} is of the same order of magnitude as the term corresponding to the sum over zeroes. Furthermore, if the constant $2\mathcal{R}_{q,a}$ is larger than the limit superior (with respect to  $x$) of 
\begin{equation}\label{sumzeroes}
\sum_{\chi \imod{q}} \overline{\chi}(a) \sum_{\rho \in \mathcal{Z}(\chi')} \frac{x^{i\Im(\rho)}}{\rho(\rho -1)},
\end{equation}
then we would have that $\log{f(x;q,a)}\geq 0$ for only finitely many $x$. It is suspected that as $x$ varies, \eqref{sumzeroes} oscillates in sign, so if $2\mathcal{R}_{q,a}$ is \emph{not} larger than the limit superior of \eqref{sumzeroes}, then the sign of $\log{f(x;q,a)}$ will change infinitely often. On the other hand, if $m > 2$, then the negative term has smaller order of magnitude than the order of magnitude of the error term in \eqref{bigequal}. In this case, the oscillatory behavior of \eqref{sumzeroes} will dominate the behavior of $\log{f(x;q,a)}$, so as we proved in part (b) of Theorem \ref{big5} $\log{f(x;q,a)}$ will change sign infinitely often. 

By the above discussion, we have established the following assertion as a corollary of Theorem \ref{oscillation}.

\begin{corollary}\label{chapter6}
Suppose that $a$ is a square modulo $q$ and that Hypothesis \ref{zeroconj} (SH${}_{q, a}$) holds. Then there is a positive number $x_0$ such that 
$$\log{f(x;q,a)}<0$$ 
for $x>x_0$ if and only if 
\begin{equation}
\label{sup}
\limsup_{x \to \infty}{\sum_{\chi \imod{q}} \overline{\chi}(a) \sum_{\rho \in \mathcal{Z}(\chi')} \frac{x^{i\Im(\rho)}}{\rho(\rho -1)} < 2\mathcal{R}_{q,a}}.
\end{equation}
\end{corollary}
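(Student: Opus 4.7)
The plan is to substitute the hypothesis that $a$ is a square modulo $q$ directly into the explicit formula of Theorem \ref{oscillation}. Being a square modulo $q$ is equivalent to $m := \mathrm{Ind}_q(a) = 2$, so $\tfrac{m \mathcal{R}_{q,a}}{m-1} = 2\mathcal{R}_{q,a}$ and $x^{(m-1)/m} = \sqrt{x}$, and \eqref{bigequal} collapses to
\[
\log f(x;q,a) = \frac{S(x) - 2\mathcal{R}_{q,a}}{\varphi(q)\sqrt{x}\log x} + \bigO\!\left(\frac{1}{\sqrt{x}\log^{2} x}\right),
\]
where
\[
S(x) := \sum_{\chi \imod{q}} \overline{\chi}(a) \sum_{\rho \in \mathcal{Z}(\chi')} \frac{x^{i\Im(\rho)}}{\rho(\rho-1)}.
\]
Note that $S(x)$ is real, as one sees by pairing each character $\chi$ with its conjugate $\overline{\chi}$ and each zero $\rho$ with its conjugate $\overline{\rho}$. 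The guiding observation for the proof is that the main term here is of order $1/(\sqrt{x}\log x)$ whenever $S(x)-2\mathcal{R}_{q,a}$ is bounded away from $0$, while the error is of strictly smaller order $1/(\sqrt{x}\log^{2} x)$. Therefore, for $x$ large, the sign of $\log f(x;q,a)$ is governed by the sign of $S(x) - 2\mathcal{R}_{q,a}$.

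For the \emph{if} direction, assume $\limsup_{x\to\infty} S(x) < 2\mathcal{R}_{q,a}$ and choose $\delta > 0$ so that $\limsup S(x) \leq 2\mathcal{R}_{q,a} - 2\delta$. Then $S(x) - 2\mathcal{R}_{q,a} < -\delta$ for all sufficiently large $x$, making the main term at most $-\delta/(\varphi(q)\sqrt{x}\log x)$. Taking $x_0$ large enough that this dominates the $\bigO$ error term in absolute value (which is possible since $1/\log x \to 0$), we conclude $\log f(x;q,a) < 0$ for all $x > x_0$. For the \emph{only if} direction I would argue by contrapositive: if $\limsup S(x) > 2\mathcal{R}_{q,a}$, pick $\delta > 0$ with $\limsup S(x) \geq 2\mathcal{R}_{q,a} + 2\delta$, and extract a sequence $x_n \to \infty$ along which $S(x_n) - 2\mathcal{R}_{q,a} > \delta$. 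The same comparison of orders then yields $\log f(x_n;q,a) > 0$ for all large $n$, so no threshold $x_0$ can exist.

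The only subtle point, and the part I expect to be genuinely delicate, is the borderline case $\limsup S(x) = 2\mathcal{R}_{q,a}$, where the leading term vanishes faster than $1/(\sqrt{x}\log x)$ and the explicit formula alone no longer determines the sign of $\log f(x;q,a)$. Because $S(x)$ behaves as an almost periodic trigonometric series in $\log x$ with frequencies $\Im(\rho)$ coming from the nontrivial zeros, this equality is a non-generic coincidence; it is not encompassed by the stated dichotomy and I would simply leave it outside the scope of the corollary.
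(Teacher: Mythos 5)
Your proposal follows essentially the same route as the paper: specialize Theorem \ref{oscillation} to $m=\mathrm{Ind}_q(a)=2$, read off the sign of the main term $\bigl(S(x)-2\mathcal{R}_{q,a}\bigr)/\bigl(\varphi(q)\sqrt{x}\log x\bigr)$ against the $\bigO(1/(\sqrt{x}\log^2 x))$ error, and deduce the two implications by comparing orders. Your quantified version (extracting a $\delta>0$ and a subsequence $x_n$) is a cleaner rendering of the paper's informal argument, and your observation that $S(x)$ is real by pairing $(\chi,\rho)$ with $(\overline{\chi},\overline{\rho})$ is correct and worth stating.

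You have also put your finger on the genuine weak spot: the borderline case $\limsup_{x\to\infty}S(x)=2\mathcal{R}_{q,a}$. The negation of \eqref{sup} is $\limsup S(x)\geq 2\mathcal{R}_{q,a}$, so the ``only if'' direction requires producing infinitely many $x$ with $\log f(x;q,a)\geq 0$ even when $S(x)-2\mathcal{R}_{q,a}$ only approaches $0$ from below at a rate comparable to $1/\log x$; in that regime the leading term is swallowed by the $\bigO(1/(\sqrt{x}\log^2 x))$ error and the explicit formula is silent on the sign. The paper's discussion preceding the corollary does not resolve this either --- it says ``It is suspected that as $x$ varies, \eqref{sumzeroes} oscillates in sign, so if $2\mathcal{R}_{q,a}$ is not larger than the limit superior $\ldots$ then the sign of $\log{f(x;q,a)}$ will change infinitely often,'' which is heuristic language rather than a proof. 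So the gap you flag is not a defect of your argument relative to the paper: it is inherited from the paper itself. Your handling is therefore the more honest one; the right thing to do, if one wanted an airtight statement, would be either to restrict the ``only if'' direction to the strict inequality $\limsup S(x) > 2\mathcal{R}_{q,a}$, or to add the unproven oscillation hypothesis as an explicit assumption.
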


Computing the limit superior in Corollary \ref{chapter6} appears to be difficult. Towards an understanding of this limit, we note that the sum over zeroes in Corollary \ref{chapter6} can be bounded in absolute value by
$$
\sum_{\chi \imod{q}} \sum_{\substack{\rho \in \mathcal{Z}(\chi') \\ \Re(\rho) = 1/2}} \frac{1}{\rho(1-\rho)}.
$$
Under the assumption of  $\textrm{GRH}_q$  the above sum is the same as
\begin{equation}
\label{fqdef}\mathcal{F}_q := \sum_{\chi \imod{q}} \sum_{\substack{\rho \in \mathcal{Z}(\chi') }} \frac{1}{\rho(1-\rho)}.
\end{equation}

For several small values of $q$, we have computed $\mathcal{F}_q$ as listed in Table \ref{computable} using a combination of theoretical tools (most notably formulas \eqref{f1} and \eqref{Fcalc}) and Sage \cite{sagemath}.

\begin{table}[h]
\centering
\caption{$\mathcal{F}_q$ for some small values of $q$}
\label{computable}
\begin{tabular}{ll|ll}
\hline
\rowcolor[HTML]{EFEFEF} 
$q$ & \multicolumn{1}{c|}{\cellcolor[HTML]{EFEFEF}$\mathcal{F}_q$} & $q$ & \multicolumn{1}{c}{\cellcolor[HTML]{EFEFEF}$\mathcal{F}_q$} \\ \hline
1   & 0.04619                                                      & 8   & 0.75326                                                     \\
2   & 0.04619                                                      & 9   & 1.41121                                                     \\
3   & 0.15942                                                      & 10  & 0.60919                                                     \\
4   & 0.20176                                                      & 11  & 4.26098                                                     \\
5   & 0.60919                                                      & 12  & 0.64516                                                     \\
6   & 0.15942                                                      & 13  & 6.45484                                                     \\
7   & 1.41418                                                      & 14  & 1.41418                                                    
\end{tabular}
\end{table}

Note that for $q\neq 11, 13$ and a square $a$ mod $q$, none of the values of $\mathcal{F}_q$ in the above table are larger than the smallest possible value of $2\mathcal{R}_{q,a}$, which is 4 for $q > 2$. (We have also calculated the values of $\mathcal{F}_q$ for $q = p, 2p$ for primes $p \leq 149$. However, once $p > 7$, we found that $\mathcal{F}_p = \mathcal{F}_{2p}$ is larger than $4=2\mathcal{R}_{q, a}$ for square $a$ mod $q$.) 
Therefore, we have the following proposition.

\begin{proposition}\label{listcor}
Let $q \leq 10$ or $q= 12, 14$ and assume $\textrm{GRH}_q$. We have 
\[\limsup_{x \to \infty}{\sum_{\chi \imod{q}} \overline{\chi}(a) \sum_{\rho \in \mathcal{Z}(\chi')} \frac{x^{i\Im(\rho)}}{\rho(\rho -1)}\leq \mathcal{F}_q < 2\mathcal{R}_{q,a}}.\] 
\end{proposition}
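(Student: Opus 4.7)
The plan is to prove the two inequalities separately: the first (the limsup bound by $\mathcal{F}_q$) reduces under Hypothesis \ref{GRHq} to a clean triangle-inequality estimate, while the second ($\mathcal{F}_q<2\mathcal{R}_{q,a}$) is a finite arithmetic check against the values of $\mathcal{F}_q$ recorded in Table \ref{computable}.

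For the first inequality, I would begin by noting that under $\mathrm{GRH}_q$ every $\rho\in\mathcal{Z}(\chi')$ satisfies $\Re(\rho)=1/2$, hence $1-\rho=\overline{\rho}$ and $\rho(1-\rho)=|\rho|^2>0$; in particular $|\rho(\rho-1)|=\rho(1-\rho)$. Since $|\overline{\chi}(a)|=1$ (as $\gcd(a,q)=1$) and $|x^{i\Im(\rho)}|=1$, the triangle inequality delivers
\[
\left|\sum_{\chi\imod{q}}\overline{\chi}(a)\sum_{\rho\in\mathcal{Z}(\chi')}\frac{x^{i\Im(\rho)}}{\rho(\rho-1)}\right|\leq\sum_{\chi\imod{q}}\sum_{\rho\in\mathcal{Z}(\chi')}\frac{1}{\rho(1-\rho)}=\mathcal{F}_q
\]
uniformly in $x$. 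A brief pairing argument, using $(\chi,\rho)\leftrightarrow(\overline{\chi},\overline{\rho})$ together with the fact that $\rho$ is a zero of $L(s,\chi')$ if and only if $\overline{\rho}$ is a zero of $L(s,\overline{\chi'})$, shows that the inner quantity is real-valued, so taking the limit superior yields the bound by $\mathcal{F}_q$.

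For the second inequality I would run a finite case check. When $q\in\{1,2\}$ the unit group is trivial and $\mathcal{R}_{q,1}=1$, so $2\mathcal{R}_{q,1}=2$, which exceeds the tabulated value $\mathcal{F}_q=0.04619$. When $q>2$ and $a$ is a square modulo $q$, the congruence $b^2\equiv a\imod{q}$ has at least two solutions in $(\Z/q\Z)^{\times}$, since $b\neq -b$ whenever $q>2$; thus $\mathcal{R}_{q,a}\geq 2$ and $2\mathcal{R}_{q,a}\geq 4$. Consulting Table \ref{computable} shows that $\mathcal{F}_q<4$ for every $q\in\{3,4,\ldots,10,12,14\}$, which finishes the check. (In the two non-cyclic cases $q=8,12$ one actually has $\mathcal{R}_{q,1}=4$ and $2\mathcal{R}_{q,1}=8$, strengthening the margin further, but the uniform lower bound $2\mathcal{R}_{q,a}\geq 4$ suffices.) The only substantive obstacle here is the computation of $\mathcal{F}_q$ itself, but this is handled separately via the formulas \eqref{f1} and \eqref{Fcalc} together with a Sage calculation, and its outputs are taken as input.
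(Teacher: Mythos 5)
Your proof is correct and matches the paper's argument, which is given informally in the discussion preceding the proposition: under $\mathrm{GRH}_q$ the inner sum is bounded in absolute value by $\mathcal{F}_q$ via the triangle inequality and $|\rho(\rho-1)|=\rho(1-\rho)$ when $\Re(\rho)=1/2$, and the inequality $\mathcal{F}_q<2\mathcal{R}_{q,a}$ is verified from Table \ref{computable} together with the lower bound $\mathcal{R}_{q,a}\geq 2$ for $q>2$ and $a$ a square modulo $q$. Your pairing observation that the sum is real (so the limsup bound follows from the absolute-value bound) is a detail the paper leaves implicit, but it is the right justification.
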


Combining the results of Proposition \ref{listcor} and Corollary \ref{chapter6}  we arrive at the following connection between $\textrm{GRH}_q$ and inequality \eqref{GeneralIneq}.

\begin{proposition}\label{Conc1}
Let $q \leq 10$ or $q=12, 14$ and let $a$ be a square modulo $q$. Assuming $\textrm{GRH}_q$, there are at most finitely $k \in \N$ for which 
\[\frac{\bar{N}_k}{\varphi(\bar{N}_k)(\log(\varphi(q)\log{\bar{N}_k}))^{\frac{1}{\varphi(q)}}} \leq \frac{1}{C(q,a)}.\]
\end{proposition}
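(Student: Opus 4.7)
The plan is to obtain Proposition \ref{Conc1} as an essentially immediate consequence of Proposition \ref{listcor} and Corollary \ref{chapter6}, combined with the equivalence \eqref{equivalence} between the primorial inequality and the sign of $\log f(x;q,a)$. So the argument will be one of ``wiring together'' existing results rather than a fresh computation.

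First I would verify that the hypotheses of Corollary \ref{chapter6} are satisfied. Since $a$ is a square modulo $q$ by assumption, it remains to check SH${}_{q,a}$. This is where $\textrm{GRH}_q$ enters: as noted in the discussion following Hypothesis \ref{zeroconj}, $\textrm{GRH}_q$ implies SH${}_{q,a}$ for every $a$ coprime to $q$. Thus Corollary \ref{chapter6} applies and reduces the proposition to verifying the limsup inequality
\[
\limsup_{x \to \infty}\sum_{\chi \imod{q}} \overline{\chi}(a) \sum_{\rho \in \mathcal{Z}(\chi')} \frac{x^{i\Im(\rho)}}{\rho(\rho -1)} < 2\mathcal{R}_{q,a}.
\]

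Next I would invoke Proposition \ref{listcor}, which asserts exactly this inequality for the moduli $q\leq 10$ and $q=12,14$ (under $\textrm{GRH}_q$, via the explicit numerical comparison $\mathcal{F}_q<2\mathcal{R}_{q,a}$ recorded in Table \ref{computable}). Applying Corollary \ref{chapter6} in the ``if'' direction then yields an $x_0=x_0(q,a)$ such that $\log f(x;q,a)<0$ for every $x>x_0$.

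Finally I would translate this back into the statement of the proposition using the identity displayed before \eqref{equivalence}: for $x\in[\bar p_k,\bar p_{k+1})$,
\[
f(x;q,a) \;=\; \frac{(\log(\varphi(q)\log \bar N_k))^{1/\varphi(q)}}{C(q,a)}\cdot\frac{\varphi(\bar N_k)}{\bar N_k},
\]
so $\log f(x;q,a)<0$ for such $x$ is the \emph{strict} form of the desired inequality for $\bar N_k$. Therefore every $k$ with $\bar p_k>x_0$ satisfies
\[
\frac{\bar N_k}{\varphi(\bar N_k)(\log(\varphi(q)\log \bar N_k))^{1/\varphi(q)}} \;>\; \frac{1}{C(q,a)},
\]
and only the finitely many $k$ with $\bar p_k\leq x_0$ can possibly fail the $>$ inequality, i.e.\ satisfy the $\leq$ inequality in the proposition. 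The only delicate point I would be careful about is the strict-versus-non-strict nature of the inequalities: Corollary \ref{chapter6} produces a strict $<$ on $\log f$, which is exactly what is needed to upgrade to a strict $>$ in \eqref{GeneralIneq} and thereby make the set of offending $k$ finite; since this is already packaged correctly in the statements of the corollary and the equivalence \eqref{equivalence}, no additional work is needed. Hence there is really no substantial obstacle here — the heavy lifting was already done in establishing Theorem \ref{oscillation} (which gives Corollary \ref{chapter6}) and in the numerical computation of the $\mathcal{F}_q$ underlying Proposition \ref{listcor}.
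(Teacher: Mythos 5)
Your proof is correct and follows exactly the route the paper intends: the paper's own "proof" of Proposition \ref{Conc1} is literally the sentence preceding it, namely to combine Proposition \ref{listcor} with Corollary \ref{chapter6}, and your write-up supplies the same wiring together with the (correct) translation via \eqref{equivalence} from the sign of $\log f(x;q,a)$ on $[\bar p_k, \bar p_{k+1})$ to the strict primorial inequality for $\bar N_k$.
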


Combining the results of Proposition \ref{Conc1} with  part (a) of Theorem \ref{big5} and by employing Corollary \ref{plattwow} and some numerical computations we deduce several generalizations of Criterion \ref{NicolasRH}.

\begin{theorem}\label{criterion}
For $q\leq 10$ and for $q=12, 14$, $\textrm{GRH}_q$ is true if and only if 
for all positive integers $k$ we have
\[\frac{\bar{N}_k}{\varphi(\bar{N}_k)(\log(\varphi(q)\log{\bar{N}_k}))^{\frac{1}{\varphi(q)}}} > \frac{1}{C(q,1)}.\]
\end{theorem}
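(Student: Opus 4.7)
The plan is to prove the two directions of the biconditional separately, leveraging the earlier results.

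For the forward direction, assume $\textrm{GRH}_q$. Since $a=1$ is a square modulo $q$ for every $q$ in our range, Proposition \ref{Conc1} applies and gives that at most finitely many $k \in \mathbb{N}$ fail to satisfy
\[
\frac{\bar{N}_k}{\varphi(\bar{N}_k)(\log(\varphi(q)\log{\bar{N}_k}))^{\frac{1}{\varphi(q)}}} > \frac{1}{C(q,1)}.
\]
To upgrade ``at most finitely many'' to ``none'', I would trace through the argument behind Proposition \ref{Conc1} via the explicit formula of Theorem \ref{oscillation} to extract an effective threshold $x_0 = x_0(q)$ such that $\log f(x;q,1) < 0$ for all $x \geq x_0$. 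Concretely, one bounds the error term in \eqref{bigequal}, bounds the oscillatory sum over zeros by $\mathcal{F}_q$, and uses that $2\mathcal{R}_{q,1} - \mathcal{F}_q > 0$ from the comparison in Proposition \ref{listcor} to obtain an explicit $x_0$. The remaining finite verification for all primes $\bar{p}_k \leq x_0$ is handled by direct computation (in the Maple/Sage computations already used in the paper).

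For the converse, I proceed by contrapositive: assume $\textrm{GRH}_q$ is false. Since $a=1$, SH${}_{q,1}$ is equivalent to $\textrm{GRH}_q$, so SH${}_{q,1}$ also fails, which means $\Theta = \Theta(q,1) > 1/2$. Because $q \leq 14 \leq 400{,}000$, Corollary \ref{plattwow} guarantees that $\mathscr{L}(s;q,1)$ has no singularities on the interval $(0,1)$, so in particular none on any segment $(1-b,1)$ with $1-\Theta < b < 1/2$. Thus the hypotheses of Theorem \ref{big5}(a) are met (with $a=1$), yielding
\[
\log f(x;q,1) = \Omega_{\pm}(x^{-b})
\]
as $x \to \infty$. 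The $\Omega_+$ assertion gives a sequence of $x$ tending to infinity with $\log f(x;q,1) > 0$. Since $f(\cdot;q,1)$ is constant on each interval $[\bar p_k,\bar p_{k+1})$, this produces infinitely many $k$ for which the displayed inequality fails, and in particular at least one such $k$, contradicting the hypothesis of the reverse direction.

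The main obstacle is the forward direction: Proposition \ref{Conc1} alone only rules out an infinite sequence of exceptional $k$, but the criterion demands that no $k \geq 1$ be exceptional. The delicate step is therefore to make the threshold $x_0(q)$ explicit (controlling the $O(1/(\sqrt{x}\log^2 x))$ error in Theorem \ref{oscillation} and the size of the zero-sum under $\textrm{GRH}_q$ using the computed values of $\mathcal{F}_q$ in Table \ref{computable}) and then to carry out a finite but substantial numerical verification over all primorials up to $x_0(q)$ for each of the twelve moduli $q \in \{1,\dots,10,12,14\}$. The choice to restrict to this list of $q$ in the statement is precisely dictated by the inequality $\mathcal{F}_q < 2\mathcal{R}_{q,1} = 4$ holding in Table \ref{computable}, which both powers Proposition \ref{Conc1} and keeps $x_0(q)$ within computationally feasible range.
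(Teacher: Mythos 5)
Your proposal is correct and follows essentially the same route as the paper: the converse is obtained from Theorem \ref{big5}(a) together with Corollary \ref{plattwow} (which supplies the required absence of real singularities of $\mathscr{L}(s;q,1)$ on $(0,1)$ for $q\leq 14$), and the forward direction rests on turning the qualitative conclusion of Proposition \ref{Conc1} into an explicit threshold $x_0(q)$ plus a finite numerical check, using the fact that $\mathcal{F}_q$ is comfortably smaller than $2\mathcal{R}_{q,1}$ for the listed $q$.

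One implementation caveat worth flagging on the forward direction: the coefficient $\tfrac{m}{m-1}\mathcal{R}_{q,a}=2\mathcal{R}_{q,1}$ in \eqref{bigequal} comes from the asymptotic relation of Proposition \ref{transfer}, whose $\bigO(x^{1/m}/\log x)$ error is not explicit, so you cannot simply ``bound the error term in \eqref{bigequal}'' and keep the full factor $2$. The paper sidesteps this by working from \eqref{inception} and Lemma \ref{ghatprime} directly, replacing the asymptotic coefficient by an explicit lower bound $0.6\mathcal{R}_{q,1}/\varphi(q)\cdot x^{1/2}$ for $\psi-\theta$ (valid for $x>x_q$, via explicit PNT-in-APs bounds of Bennett--Martin--O'Bryant--Rechnitzer and Ramar\'e--Rumely), and with Nicolas's bound on $F_{1/2}$ this yields an effective coefficient $1.2\mathcal{R}_{q,1}$ in \eqref{logpbound}; the numerical verification then checks $\mathcal{F}_q-1.2\mathcal{R}_{q,1}+\mathcal{P}_q<0$, not $\mathcal{F}_q<2\mathcal{R}_{q,1}$. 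You should also note that the paper handles $q=1$ by citing Nicolas directly and $q=2$ by the comparison $f(x;2,1)<f(x;1,1)$ (since $C(2,1)=2C(1,1)$), rather than rerunning the explicit computation for those moduli.
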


\begin{remarks}\label{rem}
(i) By using a theorem of Dirichlet (\cite[Theorem 201, p. 218]{hardy2008}) on simultaneous approximation of real numbers by rationals one can show 
$$\liminf_{x \to \infty}{\sum_{\chi \imod{q}}}  \sum_{\rho \in \mathcal{Z}(\chi')} \frac{x^{i\Im(\rho)}}{\rho(\rho -1)} =\sum_{\chi \imod{q}}  \sum_{\rho \in \mathcal{Z}(\chi')} \frac{1}{\rho(\rho -1)}=-\mathcal{F}_q.$$
(ii) Following an argument analogous to \cite[Theorem 33]{ingham1932} we can show that 
$$\limsup_{x \to \infty}{\sum_{\chi \imod{q}}}  \sum_{\rho \in \mathcal{Z}(\chi')} \frac{x^{i\Im(\rho)}}{\rho(\rho -1)}>\frac{{\rm Res}_{s=\rho_1}\mathscr{L}(s;q,1)}{|\rho_1 (1-\rho_1)|},$$
where $\rho_1$ is the first singularity (the singularity with the lowest ordinate)  of $\mathscr{L}(s;q,1)$ in the critical strip. 
 (iii) By employing \eqref{f1} and \eqref{Fcalc} we have (for $q>2$)
 \begin{equation}
 \label{fq}
 \mathcal{F}_q=\sum_{\substack{d\mid q\\ d\neq 1}} \varphi^*(d) \log\frac{d}{\pi}+2\sum_{\substack{d\mid q \\d\neq 1}} \sum_{\chi \starmod{d}} \frac{L^\prime}{L}(1, \chi)-\varphi(q)(\gamma+\log{2})+2\gamma-\log{\pi}+2,
 \end{equation}
 where 
 $\varphi^*(d)$ is the number of primitive characters mod $d$ and $\chi \starmod{d}$ denotes a primitive Dirichlet character mod $d$. From \cite[Theorem 1.4]{PR} we know that
 $$\frac{1}{\varphi^*(d)} \sum_{\chi \starmod{d}} \left| \frac{L^\prime}{L} (1, \chi) \right|^2=\sum_{n=1}^{\infty} \frac{\Lambda(n)^2}{n^2}- \sum_{p\mid d} \frac{\log^2{p}}{h(p, d)}+O(d^{-1/10}),$$
where $\Lambda(n)$ is the von Mangoldt function, $h(p, d)=(p-1)^2$ when $p^2\mid d$ and $h(p, d)=p^2-1$ otherwise. By applying the Cauchy-Schwarz inequality in the term involving $\frac{L'}{L}(1,\chi)$ in \eqref{fq} and employing the above identity for  $\sum_{\chi \starmod{d}} \left| \frac{L^\prime}{L} (1, \chi) \right|^2$, we conclude that 
\begin{equation}
\label{previous}
\mathcal{F}_q=\sum_{d\mid q} \varphi^*(d) \log\frac{d}{\pi} +O(\varphi(q)).
\end{equation}
Since $\displaystyle{\phi^*(q)=q\prod_{p\| q} (1-\tfrac{2}{p})\prod_{p^2\mid q} (1-\tfrac{1}{p})^2}$ (see \cite[p. 286]{montgomery2007}) and $\mathcal{R}_{q,1}=O(2^{\omega(q)})$ (see Proposition \ref{rqa}), where $\omega(q)$ is the number of prime divisors of $q$, from \eqref{previous} we conclude that 
$$\lim_{q\rightarrow \infty} {\mathcal{F}_q}/{\mathcal{R}_{q,1}}=\infty.$$
Thus, one can ask for determination of the finite set of integers $q$ for which $\mathcal{F}_q < 2\mathcal{R}_{q, 1}.$
\end{remarks}

In view of the above discussion and remarks it would be interesting to investigate the following.

\begin{question}\label{question}
Is it true that 
$$\limsup_{x \to \infty}{\sum_{\chi \imod{q}}}  \sum_{\rho \in \mathcal{Z}(\chi')} \frac{x^{i\Im(\rho)}}{\rho(\rho -1)}=\mathcal{F}_q?$$
\end{question}
By part (iii) of Remarks \ref{rem}, a positive answer to Question \ref{question} implies that a Nicolas type criterion for $\textrm{GRH}_q$ (similar to the one given in Theorem \ref{criterion}) can be established only for finitely many values of $q$. 

\medskip\par

The structure of this paper is as follows. First, we will ensure that the questions we are asking are justified by proving Theorem \ref{app}. From there, in Section \ref{Useful Expressions}
we will establish several useful estimates for $\log{f(x;q,a)}$ to be used throughout the paper. Once these estimates are in place,  in Sections \ref{GRH False} and \ref{sec:Omega Theoremsf}, we will turn our attention to establishing Theorem  \ref{BigUncond} and Theorem \ref{big5}. In Section \ref{GRH True} we
 prove Theorem \ref{oscillation}, which is a key tool for examining the behavior of $\log{f(x;q,a)}$.  Section \ref{comp} is dedicated to computation of several values of $\mathcal{F}_q$. Finally we prove Theorem \ref{criterion} in Section \ref{eight}.

\medskip\par

\begin{notation}\label{notation}
Throughout this paper, $\phi(n)$ is Euler's totient function and $\gamma$ is always the Euler-Mascheroni constant. The numbers $q$ and $a$ will be fixed positive integers, usually coprime. For a pair of coprime $q$ and $a$ we have the set $S_{q,a}= \left\{n \in \N \mbox{ ; } p \mid n \implies p \equiv a \imod{q}\right\}$, which includes every $k$-th prime $\bar{p}_k$ in the progression $p \equiv a \imod{q}$ and also every $k$-th primorial $\bar{N}_k =  \prod_{i = 1}^k \bar{p}_k$ arising from this progression. We follow the usual conventions of analytic number theory with respect to asymptotic notations, with the inclusion of the less common $\Omega$ notation. For one, $f(x) = \Omega_{+}(g(x))$ if there exists a positive constant c and an increasing real sequence which tends to infinity along which $f(x) > c g(x)$. Likewise $f(x) = \Omega_{-}(g(x))$ if there exists a positive constant c and an increasing real sequence which tends to infinity along which $f(x) < - c g(x)$. If both $f(x) = \Omega_{+}(g(x))$ and $f(x) = \Omega_{-}(g(x))$, we write $f(x) = \Omega_{\pm}(g(x))$. We use $\left(\Z/q\Z\right)^\times$ to denote the multiplicative group of integers modulo $q$. The real and imaginary parts of a complex number $\rho$ are denoted by $\Re(\rho)$ and $\Im(\rho)$, respectively.
\end{notation}

\section{Proof of Theorem \ref{app}}

\begin{proof}[Proof of Theorem \ref{app}]
The proof is an adaptation of the proof of {\cite{hardy2008}*{Theorem 328}} to the case of integers in $S_{q, a}$.
 For $n \in S_{q, a}$, let $r$ be the number of prime divisors of $n$ that are larger than $\varphi(q)\log{n}$. Writing $n = p_1^{\alpha_1} p_2^{\alpha_2} \ldots p_k^{\alpha_k}$, we have $(\varphi(q)\log{n})^r < n$ and thus,
\[r < \frac{\log{n}}{\log(\varphi(q)\log{n})}.\]
Employing the above bound for $r$ yields

\begin{equation}\label{star}
\begin{aligned}[b]
 \frac{n}{\varphi(n)} = \prod_{i = 1}^k \left(1 - \frac{1}{p_i}\right)^{-1} &\leq (1 - \frac{1}{\varphi(q)\log{n}})^{-r} \prod_{\substack{p \leq \varphi(q)\log{n} \\ p \mid n}} \left(1 - \frac{1}{p}\right)^{-1} \\
&< \left(1 - \frac{1}{\varphi(q)\log{n}}\right)^{\frac{-\log{n}}{\log(\varphi(q)\log{n})}} \prod_{\substack{p \leq \varphi(q)\log{n} \\ p \equiv a \imod{q}}} \left(1 - \frac{1}{p}\right)^{-1}. 
\end{aligned}
\end{equation}
The first factor on the right of \eqref{star} tends to 1 as $n \rightarrow \infty$. By invoking Theorem \ref{mertensap} for the latter product, we conclude that

\begin{equation}\label{starstar}
  \left(1 - \frac{1}{\varphi(q)\log{n}}\right)^{\frac{-\log{n}}{\log(\varphi(q)\log{n})}} \prod_{\substack{p \leq \varphi(q)\log{n} \\ p \equiv a \imod{q}}} \left(1 - \frac{1}{p}\right)^{-1} 
\sim \frac{(\log(\varphi(q)\log{n}))^\frac{1}{\varphi(n)}}{C(q, a)},
\end{equation}
as $n \rightarrow \infty$. From \eqref{star} and \eqref{starstar}, we deduce

\[\limsup_{n \in S_{q, a}} \frac{n}{\varphi(n)(\log(\varphi(q)\log{n}))^{1/\varphi(q)}} \leq \frac{1}{C(q,a)}. \]

To establish a sequence which attains this bound, consider $\bar{N}_k$, the $k$-th primorial in $S_{q,a}$. 
Then, by Theorem \ref{mertensap} 

\[\frac{\bar N_k}{\varphi(\bar N_k)} = \prod_{\substack{p \leq \bar p_k \\ p \equiv a \imod{q}}} \left(1 - \frac{1}{p}\right)^{-1} \sim \frac{(\log{\bar p_k})^\frac{1}{\varphi(q)}}{C(q,a)}, \]
as $k \rightarrow \infty$. Next, we apply \eqref{pntap} to obtain

\[\log(\varphi(q)\log{\bar N_k}) = \log(\varphi(q){\theta(\bar p_k ; a, q)}) \sim \log{\bar p_k},\]
as $k \rightarrow \infty$. 
Hence, we have

\[\frac{\bar N_k}{\varphi(\bar N_k)} \sim \frac{\left(\log(\varphi(q)\log{\bar N_k})\right)^{\frac{1}{\varphi(q)}}}{C(q,a)},\]
as $k \rightarrow \infty$. That is,
\[\lim_{k \rightarrow \infty} \frac{\bar N_k}{\varphi(\bar N_k)(\log(\varphi(q)\log{\bar N_k}))^{1/\varphi(q)}} = \frac{1}{C(q,a)}.\]
This concludes the proof. 
\end{proof}

\section{Useful Expressions for $\ensuremath{\log f(x;q,a)}$}\label{Useful Expressions}

To establish an initial expression for $\log{f(x;q,a)}$ in terms of prime counting functions, it is beneficial to develop a variety of estimates for some related functions. First, let
\[g(x) := -\diff[2]{}{x}\left(\log\log{x}\right) = \frac{1+\log{x}}{x^2\log^2{x}}.\]

Second, for a given arithmetic progression consider the error term in the prime number theorem, which will be denoted
\[S(x;q,a) := \theta(x;q,a) - \frac{x}{\varphi(q)}.\]

We next obtain an identity for $\log{f(x;q, a)}$ with respect to
\[K(x;q,a) := \int_x^\infty \! S(t;q,a) g(t) \,dt.\]

\begin{proposition}\label{logup}
Let $q, a \in \N$ be fixed coprime integers. Then, as $x\rightarrow \infty$,
 \begin{equation}\label{inceptionfinal}
\log{f(x;q,a)} = K(x;q,a) + \bigO\left(\tfrac{1}{x}\right).
\end{equation}
\end{proposition}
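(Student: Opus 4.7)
The plan is to manipulate the three summands in the defining identity
\[
\log f(x;q,a) = \frac{1}{\varphi(q)}\log\log(\varphi(q)\theta(x;q,a)) + T(x) - \log C(q,a),
\]
where $T(x) := \sum_{\bar p \le x}\log(1-1/\bar p)$, so that the divergent $\log\log x$ pieces cancel pairwise, the additive constants collapse by virtue of Mertens' theorem (Theorem \ref{mertensap}), and the integral $K(x;q,a)$ emerges as the principal surviving piece.

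First, I would expand $\log(1-1/\bar p) = -1/\bar p - \sum_{k \ge 2} 1/(k\bar p^k)$: the $k \ge 2$ double sum converges absolutely to a constant $E_0$ whose tail beyond $x$ is $O(\sum_{n>x}1/n^2) = O(1/x)$. For the remaining harmonic-type sum I would apply Abel summation against $\theta(t;q,a)$, noting $g(t) = -\frac{d}{dt}(1/(t\log t))$, which gives
\[
\sum_{\bar p \le x} \frac{1}{\bar p} = \frac{\theta(x;q,a)}{x\log x} + \int_2^x \theta(t;q,a)\, g(t)\, dt.
\]
Splitting $\theta(t;q,a) = t/\varphi(q) + S(t;q,a)$ and using the antiderivative $\int tg(t)\,dt = \log\log t - 1/\log t$ evaluates the smooth part exactly: it produces $\log\log x/\varphi(q)$ together with a $-1/(\varphi(q)\log x)$ that annihilates the matching piece of $\theta(x;q,a)/(x\log x)$, while the fluctuating part splits as $I_0 - K(x;q,a)$ for an absolute constant $I_0$. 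Packaging these steps gives, for a suitable constant $M$,
\[
T(x) = -\frac{\log\log x}{\varphi(q)} - \frac{S(x;q,a)}{x\log x} + K(x;q,a) - (M + E_0) + O(1/x).
\]

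Second, Taylor-expanding $\log\log(\varphi(q)\theta(x;q,a))$ around $\log\log x$ (by writing $\varphi(q)\theta(x;q,a) = x\bigl(1+\varphi(q)S(x;q,a)/x\bigr)$ and iterating $\log(1+u) = u + O(u^2)$) yields
\[
\frac{\log\log(\varphi(q)\theta(x;q,a))}{\varphi(q)} = \frac{\log\log x}{\varphi(q)} + \frac{S(x;q,a)}{x\log x} + O\!\left(\frac{S(x;q,a)^2}{x^2\log x}\right).
\]
Adding the two displays produces perfect pairwise cancellation of the $\log\log x/\varphi(q)$ and of the $S(x;q,a)/(x\log x)$ contributions, and the residual constants $-(M+E_0+\log C(q,a))$ vanish by Theorem \ref{mertensap}: comparing the Mertens asymptotic $T(x) = \log C(q,a) - \log\log x/\varphi(q) + o(1)$ with the formula above forces $\log C(q,a) = -(M+E_0)$.

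The main obstacle is shrinking the higher-order Taylor remainder $O(S(x;q,a)^2/(x^2\log x))$ down to $O(1/x)$; with only the Chebyshev bound $S(x;q,a) = O(x)$ this contribution is merely $O(1/\log x)$, which is too weak. Closing the gap requires invoking an effective form of the prime number theorem for arithmetic progressions (for instance Siegel--Walfisz together with explicit bounds on $\theta(x;q,a)$). As an alternative, avoiding the Taylor expansion by integrating by parts directly in $T(x) = \int_{2^-}^x \log(1-1/t)/\log t\,d\theta(t;q,a)$ and using the short computation $\frac{d}{dt}[\log(1-1/t)/\log t] = g(t) + O(1/(t^3\log t))$ keeps the main intermediate error at $O(1/(x\log x))$ throughout, but the final step still reduces to bounding $\log\log(\varphi(q)\theta(x;q,a)) - \log\log x - \varphi(q)S(x;q,a)/(x\log x)$, so an effective $S$-bound is needed either way.
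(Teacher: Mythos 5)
Your decomposition is essentially the same as the paper's: Abel summation on $\sum 1/\bar p$, the split $\theta = t/\varphi(q) + S$, and a comparison of $\log\log(\varphi(q)\theta(x;q,a))$ with $\log\log x$ (the paper uses a double application of the mean value theorem where you use a Taylor expansion, but these are the same computation). Both routes land on the remainder $O\bigl(S(x;q,a)^2/(x^2\log x)\bigr)$, and your worry about that term is the right one. In fact, it points at a genuine error in the paper's own proof: in the displayed inequality \eqref{mvt}, the equality step drops the factor $(1+\epsilon)x - x = \epsilon x$ that the mean value theorem produces, and since $\epsilon x \geq \varphi(q)\lvert S(x;q,a)\rvert$, the correct bound on the middle term of \eqref{inception} is $O\bigl(\varphi(q)\,S^2/(x^2\log x)\bigr)$, not $O(1/(x\log^2 x))$. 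The stated $O(1/x)$ in Proposition \ref{logup} is therefore not provable unconditionally.

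Where I would push back on your proposal is the claim that Siegel--Walfisz (or an explicit Chebyshev bound) closes the gap. It does not: Siegel--Walfisz only gives $S(x;q,a) = O(x\,\exp(-c\sqrt{\log x}))$, so $S^2/(x^2\log x) = O(\exp(-2c\sqrt{\log x})/\log x)$, which is $o(1/\log^A x)$ for every $A$ but nowhere near $O(1/x)$. To get $O(1/x)$ you would need $S(x;q,a) = O(\sqrt{x\log x})$, which is essentially GRH${}_q$. So either the proposition should carry the weaker remainder $O(S(x;q,a)^2/(x^2\log x))$, or it should be stated conditionally. What actually rescues the paper's downstream use (see \eqref{K1}) is a different observation you should add to your proof: since $c$ lies strictly between $x$ and $\varphi(q)\theta(x;q,a)$ and $t\mapsto 1/(t\log t)$ is decreasing, the term $S(x;q,a)\bigl(\tfrac{x\log x - c\log c}{(x\log x)(c\log c)}\bigr)$ is always nonpositive, yielding the unconditional one-sided bound $\log f(x;q,a) \leq K(x;q,a) + O(1/x)$, which is what the later applications in fact need.
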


\begin{proof}

By partial summation
\[
   \begin{aligned}[b]
     \sum_{\substack{p \leq x \\ p \equiv a\imod{q}}} \frac{1}{p} 
     &= \frac{\theta(x;q,a)}{x\log{x}} + \int_{\bar{p}_1}^x \! \theta(t;q,a)g(t) \, dt.
   \end{aligned}
\]
From here with the substitution $\theta(t;q,a) = S(t;q,a) + \tfrac{t}{\varphi(q)}$, we obtain
\begin{equation}\label{mertensadd}
 \begin{aligned}
   \sum_{\substack{p \leq x \\ p \equiv a\imod{q}}} \frac{1}{p} &= \frac{S(x;q,a)}{x\log{x}} + \frac{1}{\varphi(q)\log{x}} + \int_{\bar{p}_1}^x \! S(t;q,a)g(t) \, dt + \frac{1}{\varphi(q)}\int_{\bar{p}_1}^x \! \frac{1}{t}\frac{\log{t}+1}{\log^2{t}} \, dt. 
 \end{aligned}
\end{equation}
Hence, we may write \eqref{mertensadd} as
\begin{equation}\label{psf}
 \sum_{\substack{p \leq x \\ p \equiv a\imod{q}}} \frac{1}{p} = \frac{S(x;q,a)}{x\log{x}} + \frac{\log\log{x}}{\varphi(q)} - K(x;q,a) + M(q,a),
\end{equation}
where
\[M(q,a) = \int_{\bar{p}_1}^\infty \! S(t;q,a)g(t) \, dt + \frac{1}{\varphi(q)\log{\bar{p}_1}} -\frac{\log\log{\bar{p}_1}}{\varphi(q)}\]
is the constant term. By comparison with Mertens' second theorem for arithmetic progressions (\cite[(1-1)]{languasco2010}), it can be shown that this constant has the expression 
\begin{equation}\label{Mexpr}
 M(q,a) = \sum_{p \equiv a\imod{q}} \left\{ \log{\left(1-\frac{1}{p}\right)} + \frac{1}{p} \right\} - \log{C(q,a)},
\end{equation}
(see \cite[(1-3)]{languasco2010}).
Now it can be readily verified, by \eqref{equivalence} and \eqref{Mexpr}, that 
\begin{equation}\label{fuu}
\begin{aligned}[b]
\varphi(q)\log{f(x;q,a)} &= \log\log(\varphi(q)\theta(x;q,a)) + \varphi(q)\sum_{\substack{p \leq x \\ p \equiv a\imod{q}}} \log{\left(1-\frac{1}{p}\right)} -\varphi(q)\log{C(q,a)} \\&=  \bar{U}(x) + \underline{u}(x),
\end{aligned} 
\end{equation} 
where
\begin{equation}\label{bigu} \bar{U}(x) = \log\log(\varphi(q)\theta(x;q,a)) - \varphi(q)\sum_{\substack{p \leq x \\ p \equiv a\imod{q}}} \frac{1}{p} + \varphi(q)M(q,a)\end{equation}
and
\begin{equation}\label{littleu} \begin{aligned}[b]
    \underline{u}(x) &= 
-\varphi(q)\sum_{\substack{p > x \\ p \equiv a\imod{q}}}  \left\{ \log{\left(1-\frac{1}{p}\right)}  + \frac{1}{p} \right\}.
   \end{aligned}
\end{equation}
By crudely bounding \eqref{littleu} with a geometric series, we see that 
\begin{equation}\label{geou}
0 < \underline{u}(x) \leq \frac{\varphi(q)}{2(x-1)}.
\end{equation} 

Now from \eqref{fuu}, \eqref{bigu}, and \eqref{geou}, we have for $x \geq \bar{p}_1$ that
\[\varphi(q)\log{f(x;q,a)} = \log\log(\varphi(q)\theta(x;q,a)) -\varphi(q) \sum_{\substack{p \leq x \\ p \equiv a\imod{q}}} \frac{1}{p} + \varphi(q)M(q,a) + \underline{u}(x).\]
Substituting equation \eqref{psf} for the series in the above equation yields
\begin{equation}\label{brackee}
   \varphi(q)\log{f(x;q,a)} = \log\log(\varphi(q)\theta(x;q,a)) - \frac{\varphi(q)S(x;q,a)}{x\log{x}} - \log\log{x} + \varphi(q)K(x;q,a) + \underline{u}(x).
\end{equation}

By the mean value theorem for $h(t)=\log\log{t}$, there exists a number $c$ between $x$ and $\varphi(q)\theta(x;q,a)$ for which
\begin{equation*}\label{tcc}
\log{\log(\varphi(q)\theta(x;q,a))} = \log\log{x} + \frac{\varphi(q)S(x;q,a)}{c\log{c}}.
\end{equation*}
From here, we arrive at
\begin{equation}\label{equal2}
\log\log(\varphi(q)\theta(x;q,a)) - \log\log{x} - \frac{\varphi(q)S(x;q,a)}{x\log{x}} = \varphi(q)S(x;q,a)\left(\frac{x\log{x} - c\log{c}}{(x\log{x})(c\log{c})}\right).
\end{equation}
Combining \eqref{brackee} and \eqref{equal2}, we have the identity
\begin{equation}\label{inception}
\log{f(x;q,a)} = K(x;q,a) + S(x;q,a)\left(\frac{x\log{x} - c\log{c}}{(x\log{x})(c\log{c})}\right) +\frac{\underline{u}(x)}{\phi(q)}.
\end{equation}

Consider the second term in the right hand side of  \eqref{inception} and assume that $x<\varphi(q)\theta(x;q,a)$ . We see that with $\epsilon>0$ chosen so that $x < c < \varphi(q)\theta(x;q,a) \leq (1+\epsilon)x$, we have 

\begin{equation}
\label{mvt}
  \left| \frac{1}{x\log{x}} - \frac{1}{c\log{c}} \right| \leq \left| \frac{1}{x\log{x}} - \frac{1}{(1+\epsilon)x\log{(1+\epsilon)x}} \right| = \left| \frac{\log{\xi} + 1}{(\xi \log{\xi})^2} \right| \ll \frac{1}{x^2 \log{x}}
\end{equation}
for $x < \xi <(1+\epsilon)x$ arising from an application of the mean value theorem for the function $s(t)=1/t\log{t}$. Using the upper bound $S(x;q,a) \ll \tfrac{x}{\log{x}}$ together with \eqref{mvt}, we  have
\[
 \left|S(x;q,a)\left(\frac{x\log{x} - c\log{c}}{(x\log{x})(c\log{c})}\right)\right| \ll  \frac{1}{x \log^2{x}}.
\]
A similar bound holds if $\varphi(q)\log\log{x}<x$.

Therefore, recalling \eqref{geou}, \eqref{inception} becomes
\[\log{f(x;q,a)} = K(x;q,a) + \bigO\left(\tfrac{1}{x}\right),\]
as desired. \end{proof}
Next, we shift our attention from $\theta(x;q,a)$ to 
\[\psi(x;q,a) = \sum_{\substack{p^k \leq x \\ p^k \equiv a \imod{q}}} \log{p},\]
for which we have analogues
\begin{equation}
\label{Rdefinition}
R(x;q,a) := \psi(x;q,a) - \frac{x}{\varphi(q)}
\end{equation}
and
\[J(x;q,a) := \int_x^\infty \! R(t;q,a) g(t) \,dt.\]

By definition, $\theta(x;q,a) \leq \psi(x;q,a)$, and therefore $K(x;q,a) \leq J(x;q,a)$ when $x>e^{-1}$. In order to study the precise relation between $K(x;q, a)$ and $J(x;q,a)$ we need some notations. Recall that we denoted by $\mathrm{Ind}_q(a)$, the \emph{index of $a \imod{q}$}, the least natural number $n >1$ for which $a$ is an $n$-th power modulo $q$. Furthermore, we set
\[\mathcal{R}_{q,a} = \#\{b \in {\left(\Z/q\Z\right)^\times};~  b^{\mathrm{Ind}_q(a)} \equiv a \imod{q}\}.\]
It will be useful to have a closed form for $\mathcal{R}_{q,a}$. 

\begin{proposition}\label{rqa}
 Write $q = 2^\alpha q_1^{\alpha_1} q_2^{\alpha_2} \ldots q_r^{\alpha_r}$, where $q_i$ are the distinct odd prime divisors of $q$. Let $m = \mathrm{Ind}_q(a)$. We have
 \[\mathcal{R}_{q,a} = \begin{cases} 
 \prod_{i=1}^r \left(m, \varphi(q_i^{\alpha_i})\right) &\mbox{if } \alpha \leq 1, \\ 
               \left(m,2\right)\left(m, 2^{\alpha-2}\right) \prod_{i=1}^r \left(m, \varphi(q_i^{\alpha_i})\right) & \mbox{otherwise. } 
               \end{cases}\]
\end{proposition}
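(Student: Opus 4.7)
The plan is to reduce the counting problem to a product of counts in local groups of units at each prime power divisor of $q$, via the Chinese Remainder Theorem. Since $q$ and $a$ are coprime, CRT gives
\[
(\Z/q\Z)^\times \;\cong\; (\Z/2^{\alpha}\Z)^\times \times \prod_{i=1}^{r} (\Z/q_i^{\alpha_i}\Z)^\times,
\]
and the equation $b^m \equiv a \imod{q}$ has a solution in $(\Z/q\Z)^\times$ if and only if it has a solution in each factor; moreover, the number of solutions is the product of the numbers of solutions in each factor. Because $m = \mathrm{Ind}_q(a)$, the class of $a$ is by hypothesis an $m$-th power in $(\Z/q\Z)^\times$, hence it is an $m$-th power in every factor, so each local count is nonzero.

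For each odd prime power $q_i^{\alpha_i}$, the group $(\Z/q_i^{\alpha_i}\Z)^\times$ is cyclic of order $\varphi(q_i^{\alpha_i})$. In a cyclic group of order $n$, if $a$ is an $m$-th power then the map $b \mapsto b^m$ is an endomorphism whose image has index $\gcd(m,n)$ and whose kernel has size $\gcd(m,n)$, so the fiber over $a$ contains exactly $\gcd(m, \varphi(q_i^{\alpha_i}))$ elements. This explains the factor $\prod_{i=1}^r (m, \varphi(q_i^{\alpha_i}))$ appearing in both cases of the proposition.

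For the 2-part, I would split by the value of $\alpha$. When $\alpha = 0$ there is no factor; when $\alpha = 1$ the group $(\Z/2\Z)^\times$ is trivial, contributing $1$; in both of these cases no extra factor appears, which matches the first branch of the formula. When $\alpha \geq 2$, I would use the well-known structure
\[
(\Z/2^{\alpha}\Z)^\times \;\cong\; \Z/2\Z \times \Z/2^{\alpha-2}\Z,
\]
(with the second factor trivial when $\alpha = 2$). Applying the same cyclic-group count to each coordinate, given that $a$ is an $m$-th power, yields exactly $(m,2)\cdot(m,2^{\alpha-2})$ preimages under $b \mapsto b^m$, which matches the second branch.

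The main obstacle is the bookkeeping for the $2$-adic part, in particular making sure the formula is correct at the boundary $\alpha = 2$ (where $2^{\alpha-2} = 1$ and $(m,1) = 1$, so the displayed formula reduces to $(m,2)$, as required) and verifying that the decomposition $\Z/2\Z \times \Z/2^{\alpha-2}\Z$ can be used for counting preimages coordinatewise; this is legitimate because $m$-th-power maps commute with direct product decompositions. Multiplying the contributions from every prime power factor then gives the stated closed form for $\mathcal{R}_{q,a}$.
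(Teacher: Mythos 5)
Your proposal is correct and follows essentially the same route as the paper: reduce via CRT to prime-power moduli, count $m$-th-power preimages in each local unit group, and multiply. The only difference is presentational — the paper cites LeVeque's Theorems 3.21, 4.13, and 4.14 for the local counts, whereas you derive those counts directly from the structure theorems for $(\Z/p^k\Z)^\times$ and the elementary fact that in a finite cyclic group of order $n$ the $m$-th-power map has fibers of size $\gcd(m,n)$ over its image; this is exactly the content of the cited theorems, so the underlying argument is identical.
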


\begin{proof}
By Theorem 3.21 of \cite{leveque1996}, we know that the solutions of $x^m \equiv a \imod{q}$ are in 1-1 correspondence with the solutions of the system
 \[
\left\{ 
\begin{array}{c}
x^m \equiv a \imod{2^\alpha}, \\ 
 x^m \equiv a \imod{q_1^{\alpha_1}}, \\ 
 \vdots \\
 x^m \equiv a \imod{q_r^{\alpha_r}}.
\end{array}
\right.
\]
 For each odd prime $q_i$, Theorem 4.13 of \cite{leveque1996} establishes that there are $\left(m, \varphi(q_i^{\alpha_i})\right)$ solutions to each congruence $x^m \equiv a \imod{q_i^{\alpha_i}}$.
 On the other hand, the congruence $x^m \equiv a \imod{2^{\alpha}}$ has 1 solution if $\alpha = 1$, again by \cite[Theorem 4.13]{leveque1996}. If $\alpha \geq 2$, then $x^m \equiv a \imod{2^{\alpha}}$ has $(m, 2)\cdot(m, 2^{\alpha-2})$ solutions via Theorem 4.14 of \cite{leveque1996}. The formula for $\mathcal{R}_{q,a}$ follows by taking the product of the number of solutions as we range over congruences corresponding to prime divisors of $q$. 
\end{proof}

\begin{proposition}\label{transfer}
 Writing $m = \mathrm{Ind}_q(a)$, we have
 \[\theta(x;q,a) = \psi(x;q,a) -\frac{\mathcal{R}_{q,a}}{\varphi(q)}{x^{\frac{1}{m}}} + \bigO\left(\frac{x^{\frac{1}{m}}}{\log{x}}\right).\]
\end{proposition}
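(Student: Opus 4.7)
The plan is to write the difference $\psi(x;q,a) - \theta(x;q,a)$ as a sum over prime powers and analyze it layer by layer. By the definitions of the two functions,
\[
\psi(x;q,a) - \theta(x;q,a) = \sum_{k \geq 2} \sum_{\substack{p \leq x^{1/k} \\ p^k \equiv a \imod{q}}} \log{p}.
\]
Since $\gcd(a,q)=1$, any prime $p$ contributing must satisfy $\gcd(p,q)=1$, so $p$ lies in some residue class $b \imod{q}$ with $b^k \equiv a \imod{q}$. This allows me to rewrite the inner sum as $\sum_{b^k \equiv a} \theta(x^{1/k};q,b)$.

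The key structural observation is that, by the definition of $m = \mathrm{Ind}_q(a)$ as the \emph{least} integer $n>1$ for which $a$ is an $n$-th power modulo $q$, the residue-class set $\{b \in (\Z/q\Z)^\times : b^k \equiv a \imod{q}\}$ is empty for every $2 \leq k < m$. Those layers therefore contribute zero and the sum collapses to $k \geq m$.

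For the main layer $k = m$, there are exactly $\mathcal{R}_{q,a}$ admissible residue classes $b$, so I apply the Prime Number Theorem in arithmetic progressions to each, in the form $\theta(y;q,b) = y/\varphi(q) + O(y\exp(-c\sqrt{\log y}))$, which is certainly $y/\varphi(q) + O(y/\log y)$. Substituting $y = x^{1/m}$ (with $m$ a fixed constant so that $\log(x^{1/m}) \asymp \log x$) yields
\[
\sum_{b^m \equiv a} \theta(x^{1/m};q,b) = \frac{\mathcal{R}_{q,a}}{\varphi(q)}\, x^{1/m} + \bigO\!\left(\frac{x^{1/m}}{\log x}\right).
\]
For the higher layers $k \geq m+1$, I use the crude Chebyshev-type bound $\theta(x^{1/k};q,b) \leq \theta(x^{1/k}) \ll x^{1/k}$, so the total contribution of all layers with $k \geq m+1$ is at most
\[
\sum_{k=m+1}^{\lfloor \log_2 x \rfloor} \#\{b : b^k \equiv a\} \cdot x^{1/k} \ll x^{1/(m+1)} \log x,
\]
and this is $o\!\left(x^{1/m}/\log x\right)$ because $x^{1/m - 1/(m+1)} = x^{1/(m(m+1))}$ grows faster than any power of $\log x$. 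Combining the $k=m$ main term with this negligible tail and rearranging gives the stated identity.

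The step most likely to need care is the $k=m$ layer: verifying that the set of admissible residues has cardinality exactly $\mathcal{R}_{q,a}$ as \emph{defined}, rather than some variant that inadvertently double-counts or omits classes, and cleanly propagating the PNT error to the $x^{1/m}/\log x$ form. Both issues are essentially bookkeeping, supported by the definition of $\mathrm{Ind}_q(a)$ and the preceding Proposition \ref{rqa}, so I expect no serious obstacle beyond writing the estimates out carefully.
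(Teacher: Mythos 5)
Your proof is correct and follows essentially the same route as the paper's: decompose $\psi - \theta$ into layers indexed by the prime-power exponent $k$, observe that layers $2 \leq k < m$ vanish by the definition of $\mathrm{Ind}_q(a)$, apply the prime number theorem in arithmetic progressions to the $k = m$ layer (obtaining the main term $\mathcal{R}_{q,a}\, x^{1/m}/\varphi(q)$), and absorb the $k \geq m+1$ tail into the error. If anything, you are slightly more explicit than the paper in bounding the tail contribution.
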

\begin{proof}
Consider
\begin{equation}\label{breaking}
\psi(x;q,a) - \theta(x;q,a) = \sum_{k=2}^\infty \sum_{b \in ({\Z}/{q\Z})^\times} \sum_{\substack{p^k \leq x \\ p^k \equiv a\imod{q} \\ p \equiv b\imod{q}}} \log{p}. 
\end{equation}
Equivalently,
\[\theta(x;q,a) = \psi(x;q,a) - \sum_{k=2}^\infty \sum_{b \in ({\Z}/{q\Z})^\times} \eta_b(k) \sum_{\substack{p \leq x^{\frac{1}{k}} \\ p \equiv b\imod{q}}} \log{p},\]
where 
\[\eta_b(k) = \begin{cases}
1 & \textnormal{if } b^k \equiv a \imod{q}, 
\\
0 &\textnormal{otherwise. }
\end{cases}\]
Observe that $\eta_b(k) = 0$ for all $k < m=\mathrm{Ind}_q(a)$. Thus, we have
\begin{equation}\label{tetasi}
\theta(x;q,a) = \psi(x;q,a) - \sum_{b \in ({\Z}/{q\Z})^\times}\eta_b(m)\theta(x^{\frac{1}{m}};q,b) - \sum_{k = m+1}^\infty \sum_{b \in ({\Z}/{q\Z})^\times} \eta_b(k) \theta(x^{\frac{1}{k}};q,b).
\end{equation}

By \cite[Theorem 6.8]{narkiewicz2000}, we know that, for a positive constant $\beta_	q$, depending only on $q$,
\[\theta(x;q,b) = \frac{x}{\varphi(q)} + \bigO(x \exp(-\log^{\beta_q} x))\]
and therefore,
\begin{equation}
\label{pnt}
\sum_{b \in ({\Z}/{q\Z})^\times} \eta_b(k)\theta(x^{\frac{1}{k}};q,b) = \frac{\mathcal{R}_{q,a}}{\varphi(q)}{x^{\frac{1}{k}}} + \bigO(x^{\frac{1}{k}}\exp(-\log^{\beta_q}{(x^{\frac{1}{k}})})).
\end{equation}

Applying \eqref{pnt} in \eqref{tetasi} yields the result.
\end{proof}
In order to apply Proposition \ref{transfer} in an expression for $J(x;q, a)$ we need to integrate a version of the identity of Proposition \ref{transfer} weighted with $g(x)$ introduced at the beginning of this section. In this direction we consider
\[F_s (x) = \int_x^\infty \! t^{s}g(t) \,dt.\]

The following lemma is due to Nicolas (\cite[Lemma 2.2]{nicolas2012}).

\begin{lemma}\label{nicolaslem}
 Let $s$ be a complex number such that $\Re(s) < 1$. Then, for $x >1$,
\[F_s(x) = -\frac{x^{s-1}}{(s-1)\log{x}} + r_s(x),\]
where 
\[r_s(x) = -\frac{s}{1-s}\left( \frac{x^{s-1}}{(1-s)\log^2{x}} + \int_x^\infty \! \frac{2t^{s-2}}{(s-1)\log^3{t}} \, dt\right).\]
\end{lemma}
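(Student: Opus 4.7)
The plan is to compute $F_s(x)$ by two successive integrations by parts, using the explicit shape of $g(t)=\frac{1+\log t}{t^{2}\log^{2}t}$. First I would split
\[
F_s(x)=\int_x^\infty\frac{t^{s-2}}{\log^{2}t}\,dt+\int_x^\infty\frac{t^{s-2}}{\log t}\,dt,
\]
which is immediate from the definition of $g$. The main term in the claim, $-\frac{x^{s-1}}{(s-1)\log x}$, should come from integrating the second piece by parts, so that is where I would start.

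For the second integral I would take $u=1/\log t$, $dv=t^{s-2}\,dt$, giving $v=t^{s-1}/(s-1)$ (this is where I need $s\neq 1$; the hypothesis $\Re(s)<1$ ensures the boundary term at $\infty$ vanishes since $t^{s-1}\to 0$). The result is
\[
\int_x^\infty\frac{t^{s-2}}{\log t}\,dt=-\frac{x^{s-1}}{(s-1)\log x}+\frac{1}{s-1}\int_x^\infty\frac{t^{s-2}}{\log^{2}t}\,dt.
\]
Substituting back and combining the two $\log^{-2}$ integrals with coefficient $1+\frac{1}{s-1}=\frac{s}{s-1}$, I obtain
\[
F_s(x)=-\frac{x^{s-1}}{(s-1)\log x}+\frac{s}{s-1}\int_x^\infty\frac{t^{s-2}}{\log^{2}t}\,dt,
\]
which already isolates the main term. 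It then remains to identify the second summand with $r_s(x)$.

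For this, I would integrate by parts once more on $\int_x^\infty t^{s-2}/\log^{2}t\,dt$, this time with $u=1/\log^{2}t$ and $dv=t^{s-2}\,dt$; the derivative $du=-2/(t\log^{3}t)\,dt$ produces exactly the $\log^{-3}$ integrand that appears in $r_s(x)$, and the boundary term at $x$ yields $-\frac{x^{s-1}}{(s-1)\log^{2}x}=\frac{x^{s-1}}{(1-s)\log^{2}x}$. Rewriting $\frac{s}{s-1}=-\frac{s}{1-s}$ then matches the stated formula for $r_s(x)$ verbatim.

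The computation is essentially mechanical, so the only real obstacle is bookkeeping: keeping the signs consistent under the two identifications $-\frac{1}{s-1}=\frac{1}{1-s}$, and justifying the vanishing of the boundary terms at $\infty$, which is where the hypothesis $\Re(s)<1$ enters both times (it guarantees $t^{s-1}/\log^k t\to 0$ as $t\to\infty$ for $k=1,2$, and also makes each of the improper integrals absolutely convergent). No further analytic input is needed.
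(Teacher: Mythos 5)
Your proof is correct, and the two-step integration by parts is exactly the natural argument. Note that the paper itself supplies no proof for this lemma but simply cites Nicolas (\cite[Lemma 2.2]{nicolas2012}); your computation fills that gap cleanly. The decomposition $t^s g(t) = \tfrac{t^{s-2}}{\log^2 t} + \tfrac{t^{s-2}}{\log t}$, the first integration by parts producing the main term $-\tfrac{x^{s-1}}{(s-1)\log x}$ and the coefficient $1+\tfrac{1}{s-1}=\tfrac{s}{s-1}$, and the second integration by parts producing the boundary term $\tfrac{x^{s-1}}{(1-s)\log^2 x}$ together with the $\log^{-3}$ integral all check out, including the sign bookkeeping $\tfrac{s}{s-1}=-\tfrac{s}{1-s}$. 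The use of $\Re(s)<1$ to kill the boundary terms at infinity and to ensure absolute convergence of the improper integrals is exactly where the hypothesis is needed, and you have flagged it in the right places.
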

As a direct consequence of the above expression for $r_s(x)$ we have
\begin{equation}
\label{rs}
\begin{aligned}
 \lvert r_s(x) \rvert 
&\leq \left\lvert \frac{s}{(1-s)^2} \right\rvert \left(\frac{x^{\Re(s)-1}}{\log^2{x}} \right)\left(1+\frac{2}{\lvert \Re(s)-1 \rvert \log{x}} \right).
\end{aligned}
\end{equation}

Now, by combining Proposition \ref{logup}, Proposition \ref{transfer}, Lemma \ref{nicolaslem}, and \eqref{rs} we have the following expression for $\log{f(x;q,a)}$ in terms of $J(x;q, a)$. 
\begin{proposition}\label{transferup}
 Let $q,a \in \N$ be coprime and $m = \mathrm{Ind}_q(a)$. Then, 
\[\log{f(x;q,a)} =  J(x;q,a) - \frac{\mathcal{R}_{q,a}}{\varphi(q)} \frac{m}{(m-1)x^{\frac{m-1}{m}}\log{x}} + \bigO\left(\frac{1}{x^{\frac{m-1}{m}}\log^2{x}}\right).\]
\end{proposition}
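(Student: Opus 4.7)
The plan is to derive this directly by chaining together Proposition \ref{logup}, Proposition \ref{transfer}, and Lemma \ref{nicolaslem}. The starting point is Proposition \ref{logup}, which reduces the problem to analyzing $K(x;q,a)$ up to an $\bigO(1/x)$ error, and the observation that since $S(t;q,a) - R(t;q,a) = \theta(t;q,a) - \psi(t;q,a)$, one has
\[
K(x;q,a) - J(x;q,a) = \int_x^\infty [\theta(t;q,a) - \psi(t;q,a)]\, g(t)\, dt.
\]

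The next step is to substitute the identity from Proposition \ref{transfer}, namely $\theta(t;q,a) - \psi(t;q,a) = -\frac{\mathcal{R}_{q,a}}{\varphi(q)} t^{1/m} + \bigO(t^{1/m}/\log t)$, which splits the integral above into a main term $-\frac{\mathcal{R}_{q,a}}{\varphi(q)} F_{1/m}(x)$ and an error integral. Applying Lemma \ref{nicolaslem} with $s = 1/m$, and noting $1/m - 1 = -(m-1)/m$, one obtains
\[
F_{1/m}(x) = \frac{m}{m-1}\cdot\frac{1}{x^{(m-1)/m}\log x} + r_{1/m}(x),
\]
with $|r_{1/m}(x)| \ll x^{-(m-1)/m}/\log^2 x$ coming from the bound \eqref{rs} (here $s = 1/m$ stays bounded away from $1$ since $m \geq 2$).

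For the error integral, I would insert the bound $g(t) \ll 1/(t^2\log t)$ valid for large $t$ and integrate $\int_x^\infty t^{1/m-2}/\log^2 t\, dt$, which yields a contribution of size $\bigO(x^{-(m-1)/m}/\log^2 x)$. Assembling these pieces gives
\[
K(x;q,a) - J(x;q,a) = -\frac{\mathcal{R}_{q,a}}{\varphi(q)}\cdot\frac{m}{(m-1)\,x^{(m-1)/m}\log x} + \bigO\!\left(\frac{1}{x^{(m-1)/m}\log^2 x}\right),
\]
and substituting this into the identity from Proposition \ref{logup} yields the claimed formula, after verifying that since $m \geq 2$ we have $(m-1)/m \leq 1/2$, so the $\bigO(1/x)$ term from Proposition \ref{logup} is absorbed into the displayed error term.

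There is no serious obstacle here: the only mildly delicate points are keeping track of the sign coming from $1/m - 1 < 0$ in Lemma \ref{nicolaslem} and checking that both the tail integral arising from the $\bigO(t^{1/m}/\log t)$ portion of Proposition \ref{transfer} and the remainder $r_{1/m}(x)$ contribute at the same order $x^{-(m-1)/m}/\log^2 x$, so that nothing is lost in the final error term.
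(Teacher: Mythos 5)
Your proposal is correct and is essentially the paper's intended argument: the paper simply states that Proposition \ref{transferup} follows by combining Proposition \ref{logup}, Proposition \ref{transfer}, Lemma \ref{nicolaslem}, and \eqref{rs}, and you carry out exactly that combination, with the correct sign bookkeeping arising from $1/m-1=-(m-1)/m<0$. One small slip in the closing paragraph: for $m\geq 2$ the inequality $(m-1)/m\leq 1/2$ is backward; one actually has $(m-1)/m\geq 1/2$, with equality only at $m=2$. The absorption of the $\bigO(1/x)$ term still holds, but the correct reason is simply that $(m-1)/m<1$ for every finite $m$, so $x^{1/m}/\log^2 x\to\infty$ and hence $1/x = o\bigl(x^{-(m-1)/m}/\log^2 x\bigr)$.
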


\section{$\Omega$-Theorems for $J(x; q, a)$}\label{GRH False}

We adapt the techniques of \cite[Chapter V]{ingham1932} to establish several $\Omega$-theorems for $J(x;q,a)$. The following classical theorem plays a fundamental role in our arguments.

\begin{theorem}[\textbf{Landau's Oscillation Theorem}]\label{landauoscillation}
 Let $h: \left[ 1, \infty\right) \rightarrow \R$ be a function which is bounded and Riemann-integrable on intervals of the form $[1,T]$, $1<T<\infty$. Consider the integral
\[H(s) = \int_1^\infty \! \frac{h(x)}{x^s} \, \textrm{d}x.\]
Suppose that the line $\Re(s) = \sigma_0$ is the line of convergence for $H$, and the function $h(x)$ is of constant sign on an interval of the form $\left[ x', \infty \right)$. Then the real point $s=\sigma_0$ on the line of convergence must be a singularity of $H(s)$. 
\end{theorem}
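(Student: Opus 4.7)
The plan is to prove this by contradiction: assume $H(s)$ is analytic at $s=\sigma_0$, and derive that the integral defining $H$ must converge at some real point to the left of $\sigma_0$, contradicting the hypothesis that $\Re(s)=\sigma_0$ is the line of convergence. By replacing $h$ with $-h$ if necessary, I may assume $h(x)\geq 0$ for $x\geq x'$.

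First, I would pick a real point $\sigma_1>\sigma_0$ where $H$ converges absolutely. For $\Re(s)>\sigma_0$ the integral converges nicely enough that differentiation under the integral sign is justified, yielding
\[
  H^{(k)}(\sigma_1)=(-1)^k\int_1^\infty\frac{h(x)(\log x)^k}{x^{\sigma_1}}\,dx.
\]
Since $H$ is (by assumption) analytic at $\sigma_0$, combined with its known analyticity on the half-plane $\Re(s)>\sigma_0$, one can show that $H$ is analytic on a disk centered at $\sigma_1$ of radius strictly greater than $\sigma_1-\sigma_0$. Hence there exists $\delta>0$ such that the Taylor expansion of $H$ at $\sigma_1$ converges at $s=\sigma_0-\delta$:
\[
  H(\sigma_0-\delta)=\sum_{k=0}^\infty\frac{(\sigma_1-\sigma_0+\delta)^k}{k!}\int_1^\infty\frac{h(x)(\log x)^k}{x^{\sigma_1}}\,dx.
\]

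Next, I would split each integral at $x'$. The contribution from $[1,x']$ is handled by brute force: $h$ is bounded on this compact interval, so the series of $[1,x']$-integrals converges absolutely to something finite. Subtracting this from the Taylor series leaves an absolutely convergent series of integrals over $[x',\infty)$ whose integrands are non-negative (since $(\sigma_1-\sigma_0+\delta)^k\geq 0$ and $h(x)\geq 0$ there). Tonelli's theorem then permits interchanging the sum and integral:
\[
  \sum_{k=0}^\infty\frac{(\sigma_1-\sigma_0+\delta)^k}{k!}\int_{x'}^\infty\frac{h(x)(\log x)^k}{x^{\sigma_1}}\,dx=\int_{x'}^\infty\frac{h(x)}{x^{\sigma_1}}\,e^{(\sigma_1-\sigma_0+\delta)\log x}\,dx=\int_{x'}^\infty\frac{h(x)}{x^{\sigma_0-\delta}}\,dx,
\]
so the right-hand integral converges. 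Adding back the (trivially convergent) piece over $[1,x']$, the full integral $\int_1^\infty h(x)x^{-(\sigma_0-\delta)}\,dx$ converges, contradicting the fact that $\Re(s)=\sigma_0$ is the line of convergence of $H(s)$.

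The main technical obstacle is the legitimacy of the term-by-term operations, specifically differentiating under the integral to obtain $H^{(k)}(\sigma_1)$ and then swapping the order of summation and integration. The first is standard because $\sigma_1$ lies strictly inside the half-plane of convergence and the extra factors of $\log x$ are absorbed by choosing $\sigma_1$ slightly less than $\Re(s)$ for any $s$ nearby. The second is the heart of the argument and is precisely where the constant-sign hypothesis is used: it guarantees non-negativity on the tail, which is exactly what Tonelli's theorem requires to turn the formal Taylor expansion into a genuine convergent integral representation on the forbidden side of the line of convergence.
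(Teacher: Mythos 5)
The paper does not prove this statement itself; it cites Ingham's book (Theorem H, p.~88), and your argument is essentially the classical Landau proof found there: express the Taylor coefficients at a real point $\sigma_1>\sigma_0$ as $H^{(k)}(\sigma_1)=(-1)^k\int_1^\infty h(x)(\log x)^k x^{-\sigma_1}\,dx$, use the assumed analyticity at $\sigma_0$ (together with analyticity on $\Re(s)>\sigma_0$) to push the radius of convergence of the Taylor series past $\sigma_1-\sigma_0$, and then invoke the eventual constant sign of $h$ together with Tonelli to re-sum the series into the convergent integral $\int_{x'}^\infty h(x)x^{-(\sigma_0-\delta)}\,dx$, contradicting the definition of $\sigma_0$. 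The outline is correct; the one small point worth making explicit is that the boundedness hypothesis on $h$ forces the abscissa of absolute convergence to be at most $1$, which is what guarantees you can choose a real $\sigma_1>\sigma_0$ at which all the differentiated integrals converge absolutely.
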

\begin{proof}
See \cite[Theorem H, p. 88]{ingham1932}.
\end{proof}

Under some relatively mild conditions we prove that $J(x;q,a)$ oscillates.

\begin{theorem}\label{omegaJ}
 If $\mathscr{L}(s;q,a)$ has no singularities on $(0, 1)$ and it 
has a singularity $\rho$ with $0 < \Re(\rho) < 1$, then we have 
$J(x;q,a) < 0$
for arbitrarily large $x$ and also $J(x;q, a)>0$ for arbitrary large $x$. 
\end{theorem}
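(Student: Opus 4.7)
The plan is to argue by contradiction using Landau's Oscillation Theorem (Theorem~\ref{landauoscillation}). Suppose $J(x;q,a)$ is of constant sign for all $x \geq x_0$; without loss of generality $J(x;q,a) \geq 0$ for $x \geq x_0$, the other case being handled by replacing $J$ with $-J$. I will consider the Mellin-type transform
\[
H(s) := \int_{x_0}^{\infty} J(x;q,a)\, x^{-s-1}\, dx.
\]
The unconditional Prime Number Theorem in arithmetic progressions gives $R(x;q,a) = O(x \exp(-c\sqrt{\log x}))$ and hence $J(x;q,a) = O(\exp(-c\sqrt{\log x})/\log x)$, so $H(s)$ converges absolutely for $\Re(s) \geq 0$; in particular its abscissa of convergence $\sigma_0$ satisfies $\sigma_0 \leq 0$.

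The key analytic step is to meromorphically continue $H(s)$ to $\Re(s) > -1$ and to show that its singularities are controlled by those of $\mathscr{L}(s;q,a)$. Via Fubini and integration by parts, $H(s)$ reduces to a Mellin integral of $R(x;q,a)\, g(x)\, x^{-s}$; applying the identity $\sum_{n \equiv a \imod{q}} \Lambda(n) n^{-s} = -\mathscr{L}(s;q,a)/\varphi(q)$ together with the expansion of $F_{-s}(n)$ from Lemma~\ref{nicolaslem} produces a representation in which the leading transcendental piece is a scalar multiple of $\Phi(s+1;q,a)/(s(s+1))$, where $\Phi(w;q,a) := \sum_{\chi \imod{q}} \overline{\chi}(a) \log L(w,\chi)$ is an antiderivative of $\mathscr{L}$. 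The apparent pole at $s=0$ from the $1/s$ factor and the branch point of $\log L(s+1, \chi_0)$ arising from the pole of $L(w, \chi_0)$ at $w=1$ must cancel against boundary and error contributions (with the $-x/\varphi(q)$ piece built into $R$ playing a decisive role), as is forced by the fact that the defining integral for $H(s)$ is manifestly analytic on $\Re(s) > 0$ with finite limit as $s \to 0^+$. The remaining singularities of the continuation on $\Re(s) > -1$ will be branch points at $s = \rho - 1$ for singularities $\rho$ of $\mathscr{L}$ in the strip $0 < \Re(\rho) < 1$, each carrying branch coefficient proportional to $\sum_{\chi \imod{q}} \overline{\chi}(a)\, m_\rho(\chi)$, which is precisely the coefficient that determines whether $\rho$ is a singularity of $\mathscr{L}$ itself.

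The hypothesis of Theorem~\ref{omegaJ} then delivers two pieces of information: $H(s)$ has no real singularities on $(-1, 0)$ (from the absence of real singularities of $\mathscr{L}$ on $(0,1)$ together with the matching cancellation), and $H(s)$ does possess at least one complex singularity at $s = \rho - 1$ with $\Re(\rho - 1) \in (-1, 0)$. Since the half-plane of convergence $\Re(s) > \sigma_0$ of a Mellin transform of a non-negative function is free of singularities of the analytic continuation, the complex singularity forces $\sigma_0 \geq \Re(\rho) - 1 > -1$; combined with $\sigma_0 \leq 0$ and the regularity of $H$ at $s=0$, this places $\sigma_0 \in (-1, 0)$. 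Landau's Oscillation Theorem, applied to the non-negative integrand $J(x;q,a) x^{-s-1}$, then forces $\sigma_0$ to be a real singularity of $H$, contradicting the absence of real singularities of $H$ in $(-1, 0)$. Hence $J(x;q,a) \geq 0$ for all large $x$ is impossible; the symmetric argument handles the other sign. The main technical obstacle is executing the analytic continuation cleanly: Lemma~\ref{nicolaslem} produces $\log L$-type objects (branch points) rather than the $L'/L$-type ones (poles) that appear in $\mathscr{L}$, so one must verify that branch-point cancellations in $\Phi(s+1;q,a)$ mirror the pole cancellations in $\mathscr{L}$, and that the auxiliary error sum $\sum_n \Lambda(n)\, r_{-s}(n)$, together with all boundary contributions, introduces no spurious real singularities in the strip $-1 < \Re(s) < 0$.
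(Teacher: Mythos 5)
Your plan has the right skeleton — the same skeleton the paper uses: assume $J$ has constant sign, form a Mellin-type transform of $J$, continue it to the left, and use Landau's oscillation theorem to manufacture a contradiction from the off-axis singularity $\rho$ of $\mathscr{L}(s;q,a)$. Your direct bound showing the abscissa of convergence satisfies $\sigma_0 \le 0$ via the unconditional PNT for arithmetic progressions is fine and is in fact an elementary shortcut around the paper's first use of Landau. But the analytic-continuation step, which you yourself flag as ``the main technical obstacle,'' is left as a sketch, and it is precisely here that the paper does something structurally different and cleaner that your route does not reproduce.

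The paper never writes the continuation in terms of $\Phi(s) = \sum_\chi \overline{\chi}(a)\log L(s,\chi)$ or any other $\log L$-type object, so it never confronts branch points, multivaluedness, or monodromy. Instead, starting from the identity
\[
\int_{\bar p_1}^\infty \frac{R(t;q,a)}{t^{s+1}}\,dt \;=\; \frac{1}{\varphi(q)}\Bigl(-\tfrac{1}{s}\mathscr{L}(s;q,a) - \tfrac{1}{s-1} + \int_1^{\bar p_1} t^{-s}\,dt\Bigr) \;=:\; \Psi(s),
\]
valid for $\Re(s)>1$, the paper observes $\Psi$ is holomorphic on the simply connected region $W_\delta = \{\Re(s)>1\} \cup \{0<\Re(s)\le 1,\ |\Im(s)|<\delta\}$ (where $\delta$ is smaller than the smallest positive ordinate of a singularity of $\mathscr{L}$), and takes \emph{abstract} first and second antiderivatives $H_1$, $H_2$ of $\Psi$ there — objects guaranteed to exist and to be single-valued because $W_\delta$ is simply connected. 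After showing $H(s) = \tfrac{1}{s-1}(H_1 - H_2 + E)$ with $E$ entire, the constant-sign hypothesis plus Landau gives $\sigma_0 \le 0$, so $H_1 - H_2$ extends holomorphically to $\Re(s)>0$. The contradiction is then extracted from the \emph{second derivative}: $(H_1-H_2)'' = \Psi' - \Psi$ on $W_\delta$, and the right side has a pole of order $2$ at $\rho$ coming from the simple pole of $\mathscr{L}$ there; analytic continuation of the identity into $\{\Re(s)>0\}$ minus the discrete set of poles forces $(H_1-H_2)''$ to be unbounded near $\rho$, contradicting holomorphy. Differentiating twice is exactly what converts what would be a logarithmic branch point of $H$ at $\rho$ into a clean pole of $(H_1-H_2)''$, and this is the point your proposal never reaches.

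The concrete gaps in your write-up are: (i) the claimed form ``$\Phi(s+1;q,a)/(s(s+1))$'' is not an accurate description of the continuation — the $-\tfrac{1}{s}$ weight inside $\Psi$ means its antiderivatives are not simple rational multiples of $\Phi$; (ii) you assert the continuation has a branch point at $\rho - 1$ with coefficient $\sum_\chi \overline{\chi}(a)m_\rho(\chi)$ and no real singularities in $(-1,0)$, but you offer only the observation that branch-point cancellation ``mirrors'' pole cancellation in $\mathscr{L}$ and that the $s=0$ singularities ``must cancel''; making this rigorous would require controlling monodromy (so that ``the analytic continuation'' is even a well-defined single-valued function on the relevant region) and a careful bookkeeping of the boundary/error contributions — work that you explicitly defer and that the paper avoids altogether by the antiderivative-plus-second-derivative device; (iii) the invocation of Landau at the end requires a single-valued holomorphic continuation on a neighbourhood of the real interval $(\sigma_0, 0]$, which is not established. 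So, while your strategy is sound in outline, the key analytic step is missing, and filling it in by the route you propose would be substantially harder than the paper's actual argument.
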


\begin{proof}
We start by finding an expression for an integral involving the error term $R(x; q, a)$ defined in \eqref{Rdefinition} in terms of the logarithmic derivatives of Dirichlet $L$-functions. By way of Exercise 2.1.5 of \cite{murty2001} and formula (4.28) of \cite{montgomery2007} we have, for $\Re(s)>1$, 
\[  \int_{\overline{p}_1}^\infty \! \frac{\psi(t;q,a)}{t^{s+1}} \, dt=\frac{1}{s}\sum_{n\equiv a \imod{q}} \frac{\Lambda(n)}{n^s}=  - \frac{1}{s\varphi(q)} \sum_{\chi \imod{q}} \bar{\chi}(a) \frac{L'}{L}(s, \chi).\]
Using the above identity and definition $R(t;q,a) = \psi(t;q,a) - t/\varphi(q)$, we have, for $\Re(s)>1$, 

\begin{equation}\label{phase1} 
 \int_{\bar{p}_1}^\infty \! \frac{R(t;q,a)}{t^{s+1}} \, dt = \frac{1}{\varphi(q)}\left( -\frac{1}{s} \mathscr{L}(s;q,a) - \frac{1}{s-1} + \int_{1}^{\bar{p}_1} \! \tfrac{1}{t^s} \, dt\right).
\end{equation}

Next let $0<\delta<\Im(\rho_1)$, where $\rho_1$ is a singularity of $\mathscr{L}(s;q,a)$ with the smallest positive ordinate. Note that under the stated conditions on singularities of $\mathscr{L}(s;q,a)$, $\rho_1$ and $\delta$ are well-defined. Now let 
\[\displaystyle W_\delta = \{\,s \mbox{ ; } \Re(s) >1\,\} \cup \{\,s \mbox{ ; } 0 < \Re(s) \leq 1\text{ and } \lvert \Im(s) \rvert < \delta\,\}.\]

Observe that the right-hand side of identity \eqref{phase1} is holomorphic on $W_\delta$ (the simple pole at $s=1$ of $\mathscr{L}(s;q,a)$ cancels the simple pole at $s=1$ of $1/(s-1)$). Hence, on $W_\delta$, the right hand side of \eqref{phase1} admits an antiderivative, call it $H_1(s)$, and a second antiderivative $H_2(s)$.

For $\Re(s)>1$, from \eqref{phase1} we have \[\frac{d}{ds} \left( H_1(s) \right) = \int_{\bar{p}_1}^\infty \! \frac{R(t;q,a)}{t^{s+1}} \, dt=\frac{d}{ds}\left(-\int_{\bar{p}_1}^\infty \! \frac{R(t;q,a)}{t^{s+1}\log{t}} \, dt \right).\]
By integrating two sides of the above identity along smooth curves in the half-plane $\Re(s)>1$ and with a fixed initial point we get, for $\Re(s)>1$,
\begin{equation}\label{r1}
\int_{\bar{p}_1}^\infty \! \frac{R(t;q,a)}{t^{s+1}\log{t}} \, dt = -H_1(s) + (\lambda_2 - \lambda_1),
\end{equation}
where $\lambda_1, \lambda_2$ are fixed complex constants.

Recalling that the antiderivative of $H_1$ is $H_2$, we integrate along a smooth curve once more to obtain, for $\Re(s)>1$, 
\begin{equation}\label{r2}
\int_{\bar{p}_1}^\infty \! \frac{R(t;q,a)}{t^{s+1}\log^2{t}} \, dt = H_2(s) + (\lambda_2 - \lambda_1)s + \mu,
\end{equation}
where $\mu$ is a fixed complex constant.

Now for $\Re(s)>1$ set
\begin{equation}\label{h1}
 H(s) {:=} \int_{\bar{p}_1}^\infty \! \frac{J(x;q,a)}{x^s} \, dx  = \int_{\bar{p}_1}^\infty \! \frac{1}{x^s} \, \int_x^\infty \! \frac{R(t;q,a)}{t^2}\frac{\left(\log{t} + 1\right)}{\log^2{t}} \, dt \, dx.
\end{equation}

\begin{figure}
\centering
\begin{tikzpicture}
    \begin{scope}[thick,font=\scriptsize]
    \draw [->] (-0.5,0) -- (5.5,0) node [above left]  {$\Re(s)$};
    \draw [->] (0,-3) -- (0,3) node [below right] {$\Im(s)$};

    \draw (2,-3pt) -- (2,3pt)   node [above] {$1$};
    \draw (1,2.5) -- (1,2.5)   node [below] {$\rho$};
    \draw (0,-3pt) -- (0,3pt) node [above left] {$0$};
    \draw (-3pt,2) -- (3pt,2)   node [right] {$\delta$};
    \draw (-3pt,-2) -- (3pt,-2) node [right] {$-\delta$};

    \end{scope}
    \path [draw=none,fill=black] (1, 2.5) circle (1pt);
    
    \path [draw=none,fill=gray,semitransparent] (0,-2) rectangle (2,2);
    \path [draw=none,fill=gray,semitransparent] (2,-3) rectangle (5.5,3);
    \node [above right,gray] at (+2,+1) {$W_\delta$};
\end{tikzpicture}
\caption{The region $W_\delta$ and the singularity $\rho$.} \label{fig:W}
\end{figure}
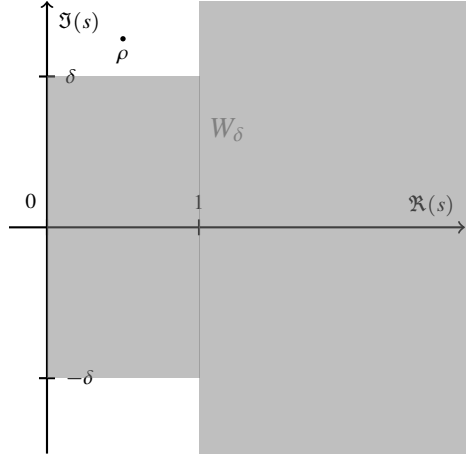

By changing the order of integration in \eqref{h1}, we arrive at  
\[H(s) = \frac{1}{s-1}\left((\bar{p}_1)^{1-s}J(\bar{p}_1;q,a) - \int_{\bar{p}_1}^\infty \! \frac{R(t;q,a)}{t^{s+1}\log{t}} \, dt - \int_{\bar{p}_1}^\infty \! \frac{R(t;q,a)}{t^{s+1}\log^2{t}} \, dt \right).\]
We substitute the integrals in the above equation using their respective identities \eqref{r1} and \eqref{r2} to get 
\begin{equation}\label{H2} 
H(s) =\frac{1}{s-1}\left(H_1(s) - H_2(s) + E(s)\right),
\end{equation}
where 
\[E(s) {:=} (\bar{p}_1)^{1-s}J(\bar{p}_1;q,a) - (\lambda_2 - \lambda_1)s - \mu - (\lambda_2 - \lambda_1).\]
Observe that $E(s)$ is entire while $H_1(s)$ and $H_2(s)$ are holomorphic on $W_\delta$.  Moreover  $H_1(1)-H_2(1)+E(1)=0$. Thus \eqref{H2} establishes an analytic continuation of $H(s)$ to $W_\delta$. 
Thus, crucially, we have extended $H(s)$ to the real line in the critical strip. Hence, if we suppose $J(x;q,a)$ is of constant sign for some interval $\left[ x', \infty \right)$, then Theorem \ref{landauoscillation} establishes that the abscissa of convergence of $H(s)$ must satisfy $\Re(s) \leq 0$, since no point with $s = \sigma > 0$ is a singularity. That is, $H(s)$ must extend to a function which is holomorphic in the half-plane $\Re(s) >0$.

Reconsidering \eqref{H2}, we see that the holomorphy of $H(s)$ implies that $H_1(s) - H_2(s)$ is holomorphic on $\Re(s) >0$, and therefore $\frac{d^2}{ds^2}\left(H_1(s) - H_2(s)\right)$ is holomorphic in this region as well. We have assumed $\mathscr{L}(s;q,a)$ has a singularity at $s=\rho$, where $0< \Re(\rho) < 1$ and $\lvert\Im(\rho)\rvert > 0$. Such a singularity must be simple, since the zeroes of $L(s,\chi)$ contribute simple poles with residue $m_\rho(\chi)$ in the logarithmic derivative. Therefore, in an appropriate deleted neighborhood of $\rho$, 
\[\mathscr{L}(s;q,a) = \frac{\sum_{\chi \imod{q}} \overline{\chi}(a) m_\rho(\chi)}{s-\rho} + c_0 + c_1(s-\rho) + c_2(s-\rho)^2\ldots,\]
where $\sum_{\chi \imod{q}} \overline{\chi}(a) m_\rho(\chi) \neq 0$. In the same neighborhood, we therefore have
\[\begin{aligned}[b]
   \frac{d^2}{ds^2}\left( H_2(s)\right) &
   &= \frac{1}{\varphi(q)}\left(\frac{-1}{\rho}\frac{\sum_{\chi \imod{q}} \overline{\chi}(a) m_\rho(\chi)}{s-\rho} - \frac{1}{\rho-1} +\int_{1}^{\bar{p}_1} \! \frac{1}{t^\rho} \, dt + d_0 + d_1(s-\rho) + d_2(s-\rho)^2 + \ldots\right),
  \end{aligned}
\]
where $d_0, d_1, \ldots$ are coefficients arising from the Laurent expansion. Consequently, 
\[\begin{aligned}[b]\frac{d^2}{ds^2}\left( H_1(s) \right) 
&= \frac{1}{\varphi(q)}\left(\frac{\sum_{\chi \imod{q}} \overline{\chi}(a) m_\rho(\chi)}{\rho(s-\rho)^2} + d'_0 +d'_1(s-\rho)+ \ldots \right),
  \end{aligned}
\]
where $d'_0, d'_1, \ldots$ are constants. This implies that $\tfrac{d^2}{ds^2} H_1(s)$ has a pole of order 2 at $\rho$. 
However, we have claimed $\tfrac{d^2}{ds^2}\left(H_1(s) - H_2(s)\right)$ is holomorphic for $\Re(s) > 0$. This is a contradiction, and so $J(x;q,a)$ must \emph{not} be of constant sign on some interval $\left[ x', \infty \right)$. Hence we have established that $J(x;q,a)$ oscillates for arbitrary large $x$.
\end{proof}

We next establish a more precise $\Omega$-theorem for $J(x;q, a)$ under some assumptions on the location of the singularities of $\mathscr{L}(s;q,a)$.

\begin{theorem}\label{omegaJ2}
For $b>0$, suppose that $\mathscr{L}(s;q,a)$ has no singularities on the line segment $(1-b, 1)$. Assume that  there exists a singularity $\rho$ of $\mathscr{L}(s;q,a)$ not at $1$ for which $\Re(\rho) = \beta > 1-b$. Then
\[J(x;q,a) = \int_x^\infty \! R(t;q,a) g(t) \,dt = \Omega_{\pm}(x^{-b}).\]

\end{theorem}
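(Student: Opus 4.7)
The plan is to adapt the Landau-style argument of Theorem \ref{omegaJ} to a shifted version of $J(x;q,a)$ that resolves the scale $x^{-b}$. For the $\Omega_+$ direction, I would argue by contradiction: assume $J(x;q,a)\leq c x^{-b}$ for some $c>0$ and all $x\geq X_0$, so that $\tilde h(x):=c x^{-b}-J(x;q,a)$ is nonnegative on $[X_0,\infty)$. For $\Re(s)$ sufficiently large, its Mellin transform decomposes as
\[
\tilde H(s)=\int_{X_0}^{\infty}\tilde h(x)x^{-s}\,dx=\frac{c X_0^{1-b-s}}{s+b-1}-H(s)+E_1(s),
\]
where $H(s)=\int_{\bar p_1}^{\infty} J(x;q,a)x^{-s}\,dx$ is the function whose analytic continuation is produced in the proof of Theorem \ref{omegaJ} and $E_1(s)=\int_{\bar p_1}^{X_0}J(x;q,a)x^{-s}\,dx$ is entire.

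Next I would import the analytic structure of $H$ established in Theorem \ref{omegaJ}: its continuation via the antiderivatives $H_1,H_2$ is holomorphic at $s=1$ and on $\{\Re(s)>1\}$, and its only singularities in $\{0<\Re(s)<1\}$ are logarithmic, located precisely at the singularities $\rho$ of $\mathscr{L}(s;q,a)$ (with coefficient proportional to the nonzero number $\sum_{\chi\imod q}\overline\chi(a)m_\rho(\chi)$). Combined with the hypothesis that $\mathscr{L}(s;q,a)$ has no singularities on the real segment $(1-b,1)$, this implies that $H$ is holomorphic on a complex neighborhood of the whole real interval $(1-b,\infty)$. Consequently, on the real axis in $[1-b,\infty)$, the only singularity of the analytic continuation of $\tilde H$ is the simple pole at $s=1-b$ from the first summand. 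Landau's Oscillation Theorem (Theorem \ref{landauoscillation}) applied to the nonnegative $\tilde h$ then forces the abscissa of convergence $\sigma_0$ of $\tilde H$ to be a real singularity of its continuation, so $\sigma_0\leq 1-b$; hence the integral defining $\tilde H$ converges absolutely on $\{\Re(s)>1-b\}$ and $\tilde H$ is holomorphic there. But from the decomposition, $\tilde H$ inherits from $-H$ a genuine logarithmic singularity at the complex point $\rho$, whose real part $\beta$ satisfies $\beta>1-b$, contradicting the holomorphy just established. Thus $J(x;q,a)=\Omega_+(x^{-b})$, and the $\Omega_-$ half follows by the symmetric argument with $\tilde h(x)=J(x;q,a)+c x^{-b}\geq 0$.

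The main obstacle is confirming that the logarithmic singularity of $H$ at $\rho$ is truly present and is not cancelled by the remaining terms in the decomposition of $\tilde H$. For this I would rely on the Laurent-expansion computation already carried out in the proof of Theorem \ref{omegaJ}: a simple pole of $\mathscr{L}(s;q,a)$ at $\rho$ with nonzero residue produces, via the antiderivative construction, a $\log(s-\rho)$-type singularity of $H_1$ and an $(s-\rho)\log(s-\rho)$-type singularity of $H_2$, yielding a genuine logarithmic singularity of $H$. Because $\rho$ is complex (the hypothesis rules out real singularities of $\mathscr{L}(s;q,a)$ on $(1-b,1)$) and hence distinct from $s=1-b$, while $E_1$ is entire, neither $\tfrac{cX_0^{1-b-s}}{s+b-1}$ nor $E_1(s)$ can cancel this singularity, so it genuinely persists in $\tilde H$ and closes the argument.
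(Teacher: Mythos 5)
Your proof is essentially the paper's: form a Mellin transform of $J(x;q,a)$ shifted by $\pm c\,x^{-b}$, apply Landau's oscillation theorem to push its abscissa of convergence down to at most $1-b$, and derive a contradiction from the persisting singularity at the complex point $\rho$. The paper works with $G(s)=\int_{\bar{p}_1}^{\infty}(J(x;q,a)-x^{-b})x^{-s}\,dx$ and detects the singularity at $\rho$ by observing that $\frac{d^{2}}{ds^{2}}(H_1(s)-H_2(s))$, a \emph{single-valued} meromorphic function, has a pole of order two there; your phrasing in terms of a logarithmic branch point of $H$ at $\rho$ is equivalent in substance but slightly less tidy, since it requires one to reason about the multivalued continuation of the antiderivatives $H_1,H_2$ around $\rho$ rather than about a single-valued second derivative whose double pole is immediate from the simple pole of $\mathscr{L}(s;q,a)$.
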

\begin{proof}
We consider the integral
 \[G(s) = \int_{\bar{p}_1}^\infty \! \frac{J(x;q,a) - x^{-b}}{x^s} \, dx.\]
Then, for $\Re(s) >1$, \eqref{h1} and \eqref{H2} establish
\begin{equation}\label{GS}
  \begin{aligned}[b]
   G(s) &= \frac{1}{s-1}\left(H_1(s) - H_2(s) + E(s)\right) - \frac{1}{s-1+b} + \int_1^{\bar{p}_1} \! \frac{x^{-b}}{x^s} \, dx,
  \end{aligned}
 \end{equation}
where $H_1(s)$ and $H_2(s)$ are holomorphic in the region $W_{\delta}$. Furthermore, $1/(s-1+b)$, $E(s)$, and $\int_1^{\bar{p}_1} \! \frac{x^{-b}}{x^s} \, dx$ are holomorphic on
\[W_{\delta, b} = \{\,s \mbox{ ; } \Re(s) > 1-b \,\} \cap W_{\delta}.\] 
The right-hand side of \eqref{GS} therefore extends to a holomorphic function in the region $W_{\delta, b}$, shown in Figure \ref{fig:Wb}.

\begin{figure}\label{WB}
\centering
\begin{tikzpicture}
    \begin{scope}[thick,font=\scriptsize]
    \draw [->] (-0.5,0) -- (5.5,0) node [above left]  {$\Re(s)$};
    \draw [->] (0,-3) -- (0,3) node [below right] {$\Im(s)$};

    \draw (2,-3pt) -- (2,3pt)   node [above] {$1$};
   \draw (1.35,-3pt) -- (1.35,3pt)   node [below] {$1-b$};
    \draw (1.75,2.5) -- (1.75,2.5)   node [below] {$\rho$};
    \draw (0,-3pt) -- (0,3pt) node [above left] {$0$};
    \draw (-3pt,2) -- (3pt,2)   node [right] {$\delta$};
    \draw (-3pt,-2) -- (3pt,-2) node [right] {$-\delta$};

    \end{scope}
    \path [draw=none,fill=black] (1.75, 2.5) circle (1pt);
    \path [draw=none,fill=gray,semitransparent] (1.35,-2) rectangle (2,2);
    \path [draw=none,fill=gray,semitransparent] (2,-3) rectangle (5.5,3);
    \node [above right,gray] at (+2,+1) {$W_{\delta, b}$};
\end{tikzpicture}
\caption{The region $W_{\delta, b}$ and the singularity $\rho$.} \label{fig:Wb}
\end{figure}
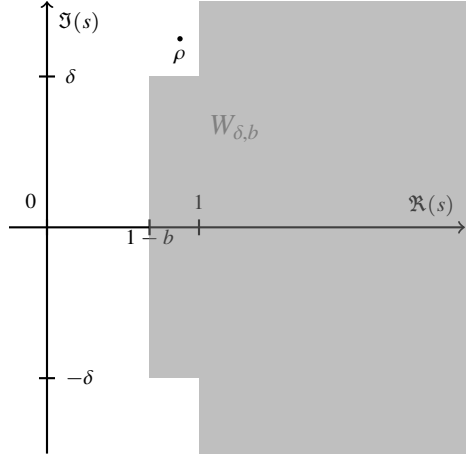

By  Theorem \ref{landauoscillation}, if we assume $J(x;q,a) - x^{-b}$ maintains a constant sign on intervals of the form $[x', \infty)$, then the abscissa of convergence $\sigma_0$ of $G(s)$ must satisfy $\sigma_0 \leq 1-b < \beta$. This is impossible since, as in the proof of Theorem \ref{omegaJ},  the second derivative of $H_1(s) - H_2(s)$ will have a pole of order 2 at $\rho$, contradicting the holomorphy of $G(s)$ in the half-plane $\Re(s) > 1-b$. We have a contradiction and therefore $J(x;q,a) - x^{-b} > 0$ on some sequence tending to infinity. Hence, 
\[J(x;q,a) = \Omega_{+}(x^{-b}).\]

Considering  $J(x;q,a) + x^{-b}$ and repeating the above proof establishes that $J(x;q,a) + x^{-b} < 0$ on another infinite sequence, i.e., \[J(x;q,a) = \Omega_{-}(x^{-b}).\]
\end{proof}

\section[Omega Theorems for $\log f(x;q,a)$]{Proofs of Theorems  \ref{BigUncond} and  \ref{big5}}
\label{sec:Omega Theoremsf}

\begin{proof}[Proof of Theorem \ref{BigUncond}]

By Proposition \ref{transferup}, we have, for $m= \text{Ind}_q(a)$,
\[\log{f(x;q,a)} =  J(x;q,a) - \frac{\mathcal{R}_{q,a}}{\varphi(q)} \frac{m}{(m-1)x^{\frac{m-1}{m}}\log{x}} + \bigO\left(\frac{1}{x^{\frac{m-1}{m}}\log^2{x}}\right).\]
We observe that  
$$0< \frac{\mathcal{R}_{q,a}}{\varphi(q)} \frac{m}{(m-1)x^{\frac{m-1}{m}}\log{x}}=\bigO\left(\frac{1}{x^{\frac{m-1}{m}}\log{x}}\right).$$
Therefore, for large enough $x$, 
\begin{equation*}\label{upperbounding}
 \log{f(x;q,a)} < J(x;q,a).
\end{equation*}

From this inequality the result follows, since  $J(x;q,a) < 0$ for arbitrarily large $x$ by Theorem \ref{omegaJ}.
\end{proof}

\begin{proof}[Proof of Theorem \ref{big5} (a)]
From Proposition \ref{transferup} we have
\[\log{f(x;q,a)} = J(x;q,a) + \bigO\left(\frac{1}{x^{\frac{m-1}{m}}\log{x}}\right).\]

Now since $\Theta\leq 1$ and $1-\Theta<b$, then by the assertion of Theorem \ref{omegaJ2}, there exists a sequence $\{x_i\}$ along which (i.e., for $x=x_i$)
\[J(x;q,a) > \frac{1}{x^b}.\]
Along that same sequence,
\begin{equation}\label{uplog}
\log{f(x;q,a)} > \frac{1}{x^b} + \bigO\left(\frac{1}{x^{\frac{m-1}{m}}\log{x}}\right) = \frac{1}{x^b}\left(1 + \bigO\left(\frac{1}{x^r \log{x}}\right)\right),
\end{equation}
where $r = \tfrac{m-1}{m} - b > 0$ since $b < 1/2$ and $(m-1)/m \geq 1/2$. Hence, the bracketed expression may be bounded by a positive constant, establishing
\[\log{f(x;q,a)} = \Omega_{+}(x^{-b}).\]

The proof for $\log{f(x;q,a)} = \Omega_{-}(x^{-b})$ follows similarly by employing $J(x;q,a) = \Omega_{-}(x^{-b})$. 
\end{proof}

\begin{proof}[Proof of Theorem \ref{big5} (b)]

As in the proof of Theorem \ref{big5} (a), we know that 
\[\log{f(x;q,a)} = J(x;q,a) + \bigO\left(\frac{1}{x^{\frac{m-1}{m}}\log{x}}\right),\]
where $m$ is at least 3.  Also the conditions of Theorem \ref{omegaJ2} for $b>1/2$ holds. Therefore, there exists a sequence $\{x_i\}$ along which
\[\log{f(x;q,a)} > \frac{1}{x^b}\left(1+ \bigO\left(\frac{1}{x^r \log{x}}\right)\right),\]
where $r = \tfrac{m-1}{m} - b > 0$ since $b < 2/3$ and $(m-1)/m \geq 2/3$. This establishes
\[\log{f(x;q,a)} = \Omega_{+}(x^{-b})\]
and the proof for $\log{f(x;q,a)} = \Omega_{-}(x^{-b})$ follows in a similar manner. 
\end{proof}

\section{An Explicit Formula for $\log{f(x;q, a)}$}\label{GRH True}

We start with a version of the explicit formula for $\int_{1}^{x}\psi(t;q,a) dt$. 

\begin{theorem}[{\cite[Lemma 3.1]{mccurley1984}}]\label{explicitformula}
For a Dirichlet character $\chi$ modulo $q$, write
\[\mathcal{Z}(\chi) = \{ \rho \in \C \text{ ; } L(\rho,\chi) = 0 \text{, } \Re(\rho) \geq 0 \text{ and } \rho \neq 0 \}.\]
Let $\alpha$ be $1$ if $\chi$ is odd and $0$ otherwise, $b(\chi)$, $c(\chi)$ be the constant terms in the Laurent expansion of $\frac{L'}{L}(s,\chi)$ about 0 and -1, respectively, and $m_0(\chi)$ be the multiplicity of the zero of $L(s,\chi)$ at 0.
Then, for $x > 1$, we have

 \begin{align*}
\int_1^x \! \psi(t;q,a) \, dt =& \frac{x^2}{2\varphi(q)}  - \frac{1}{\varphi(q)}\sum_{\chi \imod{q}}\overline{\chi}(a)\sum_{\rho \in \mathcal{Z}(\chi)} \frac{x^{\rho +1}}{\rho(\rho+1)} +\hat{R}(x;q,a),
\end{align*}
where
\begin{align}
\hat{R}(x;q,a)=&- \frac{1}{\varphi(q)} \sum_{\chi \imod{q}} \overline{\chi}(a) \sum_{n=1}^\infty \frac{x^{-2n+1-\alpha}}{2n(2n-1+2\alpha)}\notag \\ 
&+ x \left( \frac{1}{\varphi(q)} \sum_{\chi \imod{q}} \overline{\chi}(a)(m_0(\chi) - b(\chi))\right) \notag\\ 
&- x\log{x} \left(\frac{1}{\varphi(q)} \sum_{\chi \imod{q}} \overline{\chi}(a)m_0(\chi)\right) + \log{x}\left(\frac{1}{\varphi(q)} \sum_{\chi \textup{ odd}}\overline{\chi}(a)\right) \notag\\ \label{ghat}
&+ \left( \frac{1}{\varphi(q)} \sum_{\chi \textup{ even}}\overline{\chi}(a)\frac{L'}{L}(-1, \chi) + \frac{1}{\varphi(q)} \sum_{\chi \textup{ odd}}\overline{\chi}(a)(c(\chi) +1) \right).\notag
 \end{align}

\end{theorem}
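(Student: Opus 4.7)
The plan is to reduce the statement to its one-character analogue by Dirichlet's orthogonality relation
\[
\psi(t;q,a) \;=\; \frac{1}{\varphi(q)}\sum_{\chi \imod{q}} \overline{\chi}(a)\,\psi(t,\chi),\qquad \psi(t,\chi)=\sum_{n\leq t}\chi(n)\Lambda(n),
\]
and then to prove an explicit formula for $\int_1^x \psi(t,\chi)\, dt$ for each individual $\chi$. Integrating the identity above from $1$ to $x$, multiplying by $\overline{\chi}(a)/\varphi(q)$, and summing over $\chi \imod{q}$ reassembles the asserted formula; the various combinations appearing in $\hat R(x;q,a)$ (the sums over odd $\chi$, over even $\chi$, etc.) emerge as the orthogonality averages of the per-character residues.

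The starting point for the single-character formula is the Mellin identity
\[
\int_1^x \psi(t,\chi)\, dt \;=\; -\frac{1}{2\pi i}\int_{c-i\infty}^{c+i\infty}\frac{L'}{L}(s,\chi)\,\frac{x^{s+1}}{s(s+1)}\, ds \qquad (c>1),
\]
obtained from Perron's formula after interchanging summation and integration (the integral converges absolutely thanks to the $|s|^{-2}$ decay of the kernel). One then shifts the contour to the left through $\Re(s)=-2N-\tfrac12$ (chosen to avoid the zeros of $L(s,\chi)$), and appeals to the usual vertical-growth bounds for $L'/L(s,\chi)$ together with the decay of $x^{s+1}/(s(s+1))$ to check that the horizontal connectors and the far-left vertical segment contribute $o(1)$ as $N\to\infty$. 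What remains is the sum of residues at every pole of the integrand.

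These poles fall into four groups. First, when $\chi$ is principal there is a simple pole of $L'/L$ at $s=1$; the ensuing residue gives the main term $x^2/(2\varphi(q))$ after averaging in $\chi$. Second, every nontrivial zero $\rho\in\mathcal{Z}(\chi)$ with $\rho\neq 0$ contributes the simple-pole residue $-x^{\rho+1}/(\rho(\rho+1))$, producing the zero sum written in the statement. Third, the deep trivial zeros of $L(s,\chi)$ (namely $s=-2n$ for $n\geq 1$ when $\chi$ is even, and $s=-(2n+1)$ for $n\geq 1$ when $\chi$ is odd) give simple-pole contributions that collapse, upon introducing the parity indicator $\alpha\in\{0,1\}$, into the infinite series appearing in $\hat R(x;q,a)$. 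Fourth, the factor $1/(s(s+1))$ always contributes its own poles at $s=0$ and $s=-1$, which interact with whatever behaviour $L'/L(s,\chi)$ has locally.

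The fourth group is the main obstacle, since most of the explicit constants in $\hat R$ arise from a careful Laurent expansion. At $s=0$ one combines $\tfrac{L'}{L}(s,\chi)=\tfrac{m_0(\chi)}{s}+b(\chi)+O(s)$ with $\tfrac{x^{s+1}}{s+1}=x\bigl(1+s(\log x-1)+O(s^2)\bigr)$ and the factor $1/s$; reading off the residue of the resulting double (or triple, if $m_0(\chi)>0$) pole yields the term $-x\log x\cdot m_0(\chi)/\varphi(q)$, the term $x\bigl(m_0(\chi)-b(\chi)\bigr)/\varphi(q)$, and part of the constant piece. At $s=-1$ the residue splits by parity: for even $\chi$, $L'/L$ is regular at $-1$ and the contribution is simply $L'/L(-1,\chi)/\varphi(q)$ (averaged over even $\chi$), while for odd $\chi$ the trivial zero at $-1$ gives $\tfrac{L'}{L}(s,\chi)=\tfrac{1}{s+1}+c(\chi)+O(s+1)$, and its interaction with $1/(s+1)$ produces a double pole whose residue supplies both the $\log x$ term and the constant contribution $c(\chi)+1$. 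Adding these residues, reorganizing the deep trivial-zero sum via $\alpha$, and averaging in $\chi$ with weight $\overline\chi(a)/\varphi(q)$ delivers the stated identity.
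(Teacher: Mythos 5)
The paper does not prove this statement; it is quoted verbatim from McCurley's Lemma~3.1 \cite{mccurley1984} and used as a citation. Your sketch is the standard contour-integration proof---the same route McCurley's own argument takes---and your residue bookkeeping is correct: the pole at $s=1$ (present only for the principal character) yields $x^2/(2\varphi(q))$; each zero in $\mathcal{Z}(\chi)$ is a simple pole of $L'/L$ with residue $m_\rho(\chi)$ and contributes $-x^{\rho+1}/(\rho(\rho+1))$; the deep trivial zeros collapse, using the parity index $\alpha$, to the infinite series with denominator $2n(2n-1+2\alpha)$; the $s=0$ expansion $\frac{L'}{L}(s,\chi)=\frac{m_0(\chi)}{s}+b(\chi)+\cdots$ against $\frac{x^{s+1}}{s+1}=x\bigl(1+s(\log x-1)+\cdots\bigr)$ and the extra $1/s$ gives residue $x(m_0(\chi)-b(\chi))-x m_0(\chi)\log x$; and at $s=-1$ the even case gives $\frac{L'}{L}(-1,\chi)$ while for odd $\chi$ the simple pole of $L'/L$ together with $1/s = -\bigl(1+(s+1)+\cdots\bigr)$ (a sign flip you should state explicitly) produces residue $\log x + c(\chi)+1$. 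The only piece left implicit is the usual justification that the horizontal segments and the far-left vertical line contribute nothing in the limit, which requires taking the abscissa at $-2N-\tfrac12$ to stay between trivial zeros and invoking standard growth bounds for $L'/L(s,\chi)$; this is routine but should be recorded to make the sketch a proof.
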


Under the assumption of SH${}_{q, a}$ the explicit formula given in Theorem \ref{explicitformula} allows us to establish a new expression for $J(x;q,a)$. 

\begin{lemma}\label{ghatprime}
Suppose that SH${}_{q, a}$ is true. Then, for $x > 1$, we have
 \[J(x;q,a) = -\frac{1}{\varphi(q)}\sum_{\chi \imod{q}}\overline{\chi}(a) \sum_{\rho \in \mathcal{Z}(\chi)} \frac{F_{\rho}(x)}{\rho} + \hat{J}(x;q,a),\]
where 
\[F_{\rho}(x) = \int_x^\infty \! t^\rho g(t) \, dt,\]
\[\hat{J}(x;q,a) = \int_x^\infty \! \hat{R}'(t;q,a)g(t) \, dt\]
with 
 \[g(t) = \frac{\log{t}+1}{t^2 \log^2{t}},\]
and
 \[
  \begin{aligned}[b]
   \hat{R}'(t;q,a) = &-\frac{1}{\varphi(q)}\sum_{\chi \imod{q}}\overline{\chi}(a)\sum_{n=1}^\infty \left(\frac{-t^{-(2n+\alpha)}}{2n+\alpha}\right) + \frac{1}{\varphi(q)} \sum_{\chi \imod{q}}\overline{\chi}(a)(m_0(\chi) - b(\chi))\notag \\
   &-\frac{1}{\varphi(q)}\sum_{\chi \imod{q}} \overline{\chi}(a)m_0(\chi)(\log{t} +1) + \frac{1}{\varphi(q)} \sum_{\chi \textup{ odd}} \overline{\chi}(a)t^{-1}.
  \end{aligned}
 \]

\end{lemma}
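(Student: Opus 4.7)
The plan is to insert the integrated explicit formula of Theorem~\ref{explicitformula} into the definition $J(x;q,a)=\int_x^\infty R(t;q,a) g(t)\,dt$ and then shift the Stieltjes derivative $dR$ onto $g$ via integration by parts, so that each nontrivial zero $\rho$ contributes exactly an integral of the form $F_\rho(x)=\int_x^\infty t^\rho g(t)\,dt$. First, subtracting $\int_1^t s/\varphi(q)\,ds$ from the formula in Theorem~\ref{explicitformula} yields an explicit formula for
\[
  Q(t) := \int_1^t R(s;q,a)\,ds = -\frac{1}{\varphi(q)}\sum_{\chi \imod{q}} \overline{\chi}(a)\sum_{\rho \in \mathcal{Z}(\chi)} \frac{t^{\rho+1}}{\rho(\rho+1)} + \hat{R}(t;q,a) + \frac{1}{2\varphi(q)}.
\]
Writing $J(x;q,a) = \int_x^\infty g(t)\,dQ(t)$ as a Riemann--Stieltjes integral and integrating by parts gives
\[
  J(x;q,a) = -Q(x) g(x) - \int_x^\infty Q(t) g'(t)\,dt,
\]
where the boundary contribution at infinity vanishes because $g(t)=O(1/(t^2\log t))$ and $Q(t)$ grows at most polynomially.

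Next I would substitute the explicit form of $Q(t)$ into $-\int_x^\infty Q(t) g'(t)\,dt$ and interchange summation over $\rho$ with the integral. For each zero $\rho$, a second integration by parts gives
\[
  \int_x^\infty t^{\rho+1} g'(t)\,dt = -x^{\rho+1} g(x) - (\rho+1)\, F_\rho(x),
\]
and the factor $(\rho+1)$ cancels against the denominator $\rho(\rho+1)$ to leave precisely $-F_\rho(x)/\rho$. In parallel, one integration by parts on the $\hat{R}$ contribution produces
\[
  -\int_x^\infty \hat{R}(t;q,a) g'(t)\,dt = \hat{R}(x;q,a) g(x) + \int_x^\infty \hat{R}'(t;q,a) g(t)\,dt,
\]
after verifying by a direct term-by-term differentiation that the $\hat{R}'$ in the statement coincides with $\tfrac{d}{dt}\hat{R}(t;q,a)$. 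Finally, collecting all the boundary contributions at $t=x$ (the initial $-Q(x)g(x)$, the $-x^{\rho+1}g(x)$ pieces coming from each zero, the constant $\tfrac{g(x)}{2\varphi(q)}$, and the $\hat{R}(x;q,a) g(x)$ term), one checks via the explicit formula for $Q(x)$ itself that these boundary pieces cancel identically, yielding the stated identity.

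The main technical obstacle is justifying the interchange of the sum over nontrivial zeros with the integrals and the integration by parts. Here the integrated explicit formula has a decisive advantage: the sum $\sum_\rho t^{\rho+1}/(\rho(\rho+1))$ is \emph{absolutely} convergent because of the classical bound $\sum_\rho 1/|\rho(\rho+1)| < \infty$, and after the second integration by parts the resulting series $\sum_\rho F_\rho(x)/\rho$ also converges absolutely by Lemma~\ref{nicolaslem}, which supplies the bound $F_\rho(x) \ll x^{\Re(\rho)-1}/(|1-\rho|\log x)$, combined again with $\sum_\rho 1/|\rho(1-\rho)| < \infty$. The cleanest rigorous route is a standard truncation: restrict to zeros with $|\Im(\rho)| \leq T$ on the interval $[x,T']$, apply Fubini for each finite truncation, and let $T,T' \to \infty$ using uniform tail estimates provided by the absolute convergence just mentioned.
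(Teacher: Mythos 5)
Your argument is structurally the same as the paper's: insert the integrated explicit formula (Theorem \ref{explicitformula}), integrate by parts to move the weight onto $g$, interchange the sum over zeros with the integral, and observe that the boundary terms cancel. The algebra you carry out is correct, and the final cancellation of the $-Q(x)g(x)$, $x^{\rho+1}g(x)$, $\hat{R}(x;q,a)g(x)$, and $\tfrac{g(x)}{2\varphi(q)}$ pieces does work. The organization is a little tidier than the paper's, which instead applies dominated convergence first and then integrates both sides of the resulting equation once by parts, equating \eqref{oneone} and \eqref{twotwo}; these are the same computation rearranged.

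The one genuine gap is in the justification of the interchange, and it is exactly where the hypothesis $\mathrm{SH}_{q,a}$ enters -- something your argument never identifies. You invoke the absolute convergence of $\sum_\rho 1/|\rho(\rho+1)|$ and of $\sum_\rho F_\rho(x)/\rho$ to get ``uniform tail estimates,'' but neither supplies a dominating function for the partial sums $\sum_{|\Im(\rho)|\le T} g'(t)\,t^{\rho+1}/(\rho(\rho+1))$. The natural majorant is $|g'(t)|\sum_\rho t^{\Re(\rho)+1}/|\rho(\rho+1)|$; bounding $t^{\Re(\rho)+1}\le t^2$ uniformly gives $\asymp 1/(t\log t)$, which is not integrable on $(x,\infty)$, and the per-zero bound $F_\rho(x)\ll x^{\Re(\rho)-1}/(|1-\rho|\log x)$ from Lemma \ref{nicolaslem} carries a factor that degrades as $\Re(\rho)\to 1$. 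The paper's proof fixes this precisely by using $\mathrm{SH}_{q,a}$: any zero with $\Re(\rho)\notin\{0,\tfrac12\}$ has $\sum_\chi\overline{\chi}(a)m_\rho(\chi)=0$ and so drops out of the character-weighted sum entirely, leaving only terms with $t^{\Re(\rho)+1}\in\{t,\,t^{3/2}\}$, for which $|g'(t)|\cdot t^{3/2}\sum_\rho 1/|\rho(\rho+1)|$ is integrable and the dominated convergence (or, equivalently, your truncation-and-tail argument) goes through. Without stating this cancellation, your appeal to ``absolute convergence just mentioned'' leaves the interchange unjustified, and the lemma's hypothesis appears to be unused.
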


\begin{proof}

Since we are assuming SH${}_{q, a}$, recall that for any zero $\rho$ in $\bigcup_\chi \mathcal{Z}(\chi)$ whose real part is neither $0$ nor $\tfrac{1}{2}$, we have that
\begin{equation}\label{altRH}
\sum_{\chi \imod{q}} \overline{\chi}(a) m_\rho(\chi) = 0.
\end{equation}
Thus, this assumption allows us to prove that
\[\sum_{\chi \imod{q}} \overline{\chi}(a) \sum_{\rho \in \mathcal{Z}(\chi)} g'(t) \frac{t^{\rho+1}}{\rho(\rho+1)}\]
is integrable on $(x,\infty)$ for any $x>1$ and it can be integrated term by term. In anticipation of an application of the Dominated Convergence Theorem, for $n \geq 1$, consider the sequence of functions
\[f_n(t) = \sum_{\chi \imod{q}} \overline{\chi}(a)\sum_{\substack{\rho \in \mathcal{Z}(\chi) \\ \lvert \Im(\rho) \rvert \leq n}} g'(t) \frac{t^{\rho + 1}}{\rho(\rho+1)}.\]
We see that, for all $t \in (x,\infty)$, 
\[\lim_{n \to \infty} f_n(t) = \sum_{\chi \imod{q}} \overline{\chi}(a)\sum_{\rho \in \mathcal{Z}(\chi)} g'(t) \frac{t^{\rho +1}}{\rho(\rho+1)}.\]
Moreover,
\[\begin{aligned}[b]
  &\lvert f_n(t) \rvert = \left\lvert g'(t) \sum_{\chi \imod{q}} \overline{\chi}(a)\sum_{\substack{\rho \in \mathcal{Z}(\chi) \\ \lvert \Im(\rho) \rvert \leq n}} \frac{t^{\rho + 1}}{\rho(\rho+1)} \right\rvert \\ 
  &\leq \lvert g'(t) \rvert \cdot \left\lvert \sum_{\chi \imod{q}} \overline{\chi}(a)\left(\sum_{\substack{\rho \in \mathcal{Z}(\chi) \\ \Re(\rho) = 1/2 \\ \lvert \Im(\rho) \rvert \leq n}} \frac{t^{\rho +1}}{\rho(\rho+1)} + \sum_{\substack{\rho \in \mathcal{Z}(\chi) \\ \Re(\rho) = 0 \\ \lvert \Im(\rho) \rvert \leq n}} \frac{t^{\rho +1}}{\rho(\rho+1)}\right) + \sum_{\chi \imod{q}} \overline{\chi}(a) \sum_{\substack{\rho \in \mathcal{Z}(\chi) \\ \Re(\rho) \neq 0,1/2 \\ \lvert \Im(\rho) \rvert \leq n}} \frac{t^{\rho +1}}{\rho(\rho+1)}\right\rvert.
  \end{aligned}
\]
Since zeroes are counted with multiplicity, the final sum becomes 
\[
 \sum_{\substack{\rho \in \cup_\chi \mathcal{Z}(\chi) \\ \Re(\rho) \neq 0, 1/2 \\ \lvert \Im(\rho) \leq n}} \frac{\sum_{\chi \imod{q}} \overline{\chi}(a) m_\rho(\chi)}{\rho (\rho+1)} t^{\rho+1} = 0
\]
by way of \eqref{altRH}. We now have
\[
  \lvert f_n(t) \rvert \leq \lvert g'(t) \rvert \left(t^\frac{3}{2} \sum_{\chi \imod{q}} \sum_{\substack{\rho \in \mathcal{Z}(\chi) \\ \Re(\rho) = 1/2 \\ \lvert \Im(\rho) \rvert \leq n}} \frac{1}{\lvert \rho(\rho+1) \rvert}  + t \sum_{\chi \imod{q}} \sum_{\substack{\rho \in \mathcal{Z}(\chi) \\ \Re(\rho) = 0 \\ \lvert \Im(\rho) \rvert \leq n}} \frac{1}{\lvert \rho(\rho+1) \rvert} \right).
  \]
Removing the restriction on $\Im(\rho)$ yields
  \[
  \lvert f_n(t) \rvert  \leq \lvert g'(t) \rvert \left(t^\frac{3}{2} \sum_{\chi \imod{q}} \sum_{\substack{\rho \in \mathcal{Z}(\chi) \\ \Re(\rho) = 1/2 }} \frac{1}{\lvert \rho(\rho+1) \rvert} + t \sum_{\chi \imod{q}} \sum_{\substack{\rho \in \mathcal{Z}(\chi) \\ \Re(\rho) = 0}} \frac{1}{\lvert \rho(\rho+1) \rvert} \right).
\]
We note that this upper bound for $\lvert f_n(t) \rvert$ is integrable on $(x,\infty)$ for $x>1$, since 
\[g'(t) = -\frac{1}{t^3}\left(\frac{2}{\log{t}}+\frac{3}{\log^2{t}}+\frac{2}{\log^3{t}}\right).\]
Hence, the dominated convergence theorem allows us to have
\begin{equation}\label{onetwo}
\int_x^\infty \! \frac{1}{\varphi(q)} \sum_{\chi \imod{q}} \overline{\chi}(a)\sum_{\rho \in \mathcal{Z}(\chi)} g'(t) \frac{t^{\rho+1}}{\rho(\rho+1)} \, dt = \frac{1}{\varphi(q)} \sum_{\chi \imod{q}} \overline{\chi}(a) \Big( \sum_{\rho \in \mathcal{Z}(\chi)} \int_x^\infty \! g'(t) \frac{t^{\rho+1}}{\rho(\rho+1)} \, dt \Big). 
\end{equation}

Now employing integration by part on the left-hand side of \eqref{onetwo}  together with Theorem \ref{explicitformula} yield
\begin{eqnarray}
\int_x^\infty \! \frac{1}{\varphi(q)} \sum_{\chi \imod{q}} \overline{\chi}(a)\sum_{\rho \in \mathcal{Z}(\chi)} g'(t) \frac{t^{\rho+1}}{\rho(\rho+1)} \, dt&=& 0- 
 \frac{g(x)}{\varphi(q)} \sum_{\chi \imod{q}} \overline{\chi}(a) \sum_{\rho \in \mathcal{Z}(\chi)} \frac{x^{\rho+1}}{\rho(\rho+1)} \notag\\ \label{oneone}
 &&+\int_{x}^{\infty} g(t)\left(\psi(t;q,a) - \frac{t}{\varphi(q)} \right) \, dt -\int_x^\infty \! g(t)\hat{R}'(t;q,a) \, dt.
\end{eqnarray}

Similarly integration by parts on the right-hand side of \eqref{onetwo} yields
\begin{eqnarray}
 \frac{1}{\varphi(q)} \sum_{\chi \imod{q}} \overline{\chi}(a) \Big( \sum_{\rho \in \mathcal{Z}(\chi)} \int_x^\infty \! g'(t) \frac{t^{\rho+1}}{\rho(\rho+1)} \, dt \Big) &=&0- 
 \frac{g(x)}{\varphi(q)} \sum_{\chi \imod{q}} \overline{\chi}(a) \sum_{\rho \in \mathcal{Z}(\chi)} \frac{x^{\rho+1}}{\rho(\rho+1)} \notag\\ \label{twotwo}
 &&-\frac{1}{\varphi(q)}\sum_{\chi \imod{q}} \overline{\chi}(a) \sum_{\rho \in \mathcal{Z}(\chi)} \frac{F_{\rho}(x)}{\rho},
\end{eqnarray}
where $F_{\rho}(x) = \int_x^\infty \! g(t)t^\rho \, dt$.

The result follows by \eqref{onetwo}, \eqref{oneone}, and \eqref{twotwo}.
\end{proof}
\begin{corollary}\label{corr}
Under the assumption of SH${}_{q, a}$
\begin{equation}\label{jident2}
J(x;q,a) = \frac{1}{\varphi(q) \sqrt{x} \log{x}} \sum_{\chi \imod{q}} \overline{\chi}(a) \sum_{\rho \in \mathcal{Z}(\chi')} \frac{x^{i\Im(\rho)}}{\rho(\rho-1)} + \hat{J}(x;q,a) + \bigO\left(\frac{1}{\sqrt{x}\log^2{x}}\right).
\end{equation}
\end{corollary}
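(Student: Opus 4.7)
The starting point is Lemma~\ref{ghatprime}, which gives
\[J(x;q,a) = -\frac{1}{\varphi(q)}\sum_{\chi \imod{q}}\overline{\chi}(a)\sum_{\rho \in \mathcal{Z}(\chi)}\frac{F_\rho(x)}{\rho} + \hat{J}(x;q,a).\]
The plan is to apply Lemma~\ref{nicolaslem} termwise inside the $\rho$-sum and then exploit SH${}_{q,a}$ to collapse the surviving contributions onto the critical line. The first step is to pass from $\mathcal{Z}(\chi)$ to $\mathcal{Z}(\chi')$: by the factorization $L(s,\chi)=L(s,\chi')\prod_{p\mid q,\,p\nmid q'}(1-\chi'(p)p^{-s})$, the zeros in $\mathcal{Z}(\chi)\setminus\mathcal{Z}(\chi')$ lie on $\Re(s)=0$ and form arithmetic progressions along the imaginary axis. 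For such $\rho$, Lemma~\ref{nicolaslem} yields $|F_\rho(x)/\rho|\ll 1/(|\rho|^2 x\log x)$; summing over these zeros (where $\sum 1/|\rho|^2$ converges) gives a total contribution of $\bigO(1/(x\log x))$, which is absorbed into the error term of the corollary.

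For the remaining sum over $\mathcal{Z}(\chi')$, I would substitute $F_\rho(x)=-x^{\rho-1}/((\rho-1)\log x)+r_\rho(x)$ from Lemma~\ref{nicolaslem}. Invoking SH${}_{q,a}$, equation~\eqref{altRH} forces $\sum_{\chi\imod q}\overline{\chi}(a)m_\rho(\chi)=0$ for every zero $\rho$ in the critical strip with $\Re(\rho)\neq 1/2$, so when the $\chi$-sum is collected these off-line contributions vanish. The surviving zeros satisfy $\Re(\rho)=1/2$, making $x^{\rho-1}=x^{i\Im(\rho)}/\sqrt{x}$; this produces the claimed main term
\[\frac{1}{\varphi(q)\sqrt{x}\log x}\sum_{\chi \imod{q}}\overline{\chi}(a)\sum_{\rho\in\mathcal{Z}(\chi')}\frac{x^{i\Im(\rho)}}{\rho(\rho-1)}.\]
The estimate on $r_\rho(x)$ in Lemma~\ref{nicolaslem} then gives, for $\Re(\rho)=1/2$, the per-zero error $|r_\rho(x)/\rho|\ll 1/(|\rho|^2\sqrt{x}\log^2 x)$ (using $|\rho|\asymp|\rho-1|$ for $|\Im(\rho)|$ large), which sums to the stated $\bigO(1/(\sqrt{x}\log^2 x))$ via the convergent series $\sum 1/|\rho(\rho-1)|$ (essentially $\mathcal{F}_q$).

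The main technical obstacle is that the series $\sum_\rho x^{\rho-1}/(\rho(\rho-1))$ attached to a single character is only conditionally convergent, so the cancellation from~\eqref{altRH} must not be applied before the sums over $\chi$ and $\rho$ are interchanged. The correct order of operations is to truncate to $|\Im(\rho)|\le T$, regroup into a sum over distinct zeros weighted by $\sum_\chi\overline{\chi}(a)m_\rho(\chi)$, apply~\eqref{altRH} to discard the off-line zeros, and finally pass to $T\to\infty$ by the dominated-convergence argument already used in the proof of Lemma~\ref{ghatprime}. Once this interchange is justified, combining the main term with the two error contributions yields \eqref{jident2}.
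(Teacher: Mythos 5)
Your argument is correct and follows essentially the same route as the paper: apply Lemma~\ref{nicolaslem} termwise to the $F_\rho(x)$ in Lemma~\ref{ghatprime}, absorb the strictly imaginary zeroes from imprimitive characters into an $O(1/(x\log x))$ contribution, use SH${}_{q,a}$ via~\eqref{altRH} to reduce the main term to the zeroes with $\Re(\rho)=1/2$ (where $x^{\rho-1}=x^{i\Im(\rho)}/\sqrt{x}$), and control the $r_\rho$ terms with~\eqref{rs} and the convergence of $\sum 1/|\rho(\rho-1)|$. One small correction to your final paragraph: for fixed $x>1$ the series $\sum_\rho x^{\rho-1}/(\rho(\rho-1))$ attached to a single character is actually absolutely convergent, since $|x^{\rho-1}|\leq 1$ on $\Re(\rho)\leq 1$ and the zero-counting bound $N(T,\chi)\ll T\log T$ gives $\sum_\rho 1/|\rho(\rho-1)|<\infty$; so the interchange of the $\chi$- and $\rho$-sums needed to invoke~\eqref{altRH} is automatic and your proposed truncation/dominated-convergence fallback, while harmless, is not required.
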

\begin{proof}

Applying Lemma \ref{nicolaslem} to $F_\rho(x)$ in Lemma \ref{ghatprime} yields
\[J(x;q,a) = \frac{1}{\varphi(q)}\left(\sum_{\chi \imod{q}} \overline{\chi}(a) \sum_{\rho \in \mathcal{Z}(\chi)}\left(\frac{x^{\rho-1}}{\rho(\rho-1)\log{x}} - \frac{r_\rho(x)}{\rho}\right)\right) + \hat{J}(x;q,a).\]
If we apply the estimates of \eqref{rs} to ${r_\rho(x)}$ at strictly imaginary zeroes arising from an imprimitive character, we see that these zeroes only contribute terms of order $\bigO(\tfrac{1}{x\log{x}})$ to the sum involving such zeroes. Meanwhile, since we have assumed SH${}_{q, a}$, any other contribution must be from zeroes of the form $\rho = 1/2 + it$, and therefore the estimate  \eqref{rs} for $r_\rho(x)$ yields the result.
\end{proof}

The next lemma provides an estimation for  $\hat{J}(x;q,a)$.
\begin{lemma}\label{lemmm}
We have $\hat{J}(x;q,a)=O(1/x).$
\end{lemma}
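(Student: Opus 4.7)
The plan is to bound the integral $\hat{J}(x;q,a) = \int_x^\infty \hat{R}'(t;q,a)\,g(t)\,dt$ by splitting $\hat{R}'(t;q,a)$ into its four component pieces as displayed in Lemma \ref{ghatprime} and estimating each separately. The decay rate of $g(t) = (\log t + 1)/(t^2\log^2 t)$ is essentially $1/(t^2 \log t)$, so even when $\hat{R}'(t;q,a)$ grows as fast as $\log t$ the integral will contract to size $O(1/x)$.

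First I would group the terms of $\hat{R}'(t;q,a)$ by their $t$-dependence: (i) the $t$-independent constant involving $m_0(\chi) - b(\chi)$; (ii) the logarithmic piece $-\frac{1}{\varphi(q)}\sum_{\chi \imod{q}} \overline{\chi}(a)\,m_0(\chi)(\log t + 1)$; (iii) the $t^{-1}$ piece coming from the odd characters; and (iv) the tail $\sum_{n\geq 1}t^{-(2n+\alpha)}/(2n+\alpha)$. For piece (iv), since the outer sum over characters mod $q$ is finite, I can freely interchange it with the sum over $n$, and then bound the resulting inner series by a geometric tail so that piece (iv) is $O(1/t^2)$ uniformly in $t$ for $t \geq 2$.

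Next I would compute, or at least estimate, each of the resulting four integrals. Using the fact that $-1/(t\log t)$ is an antiderivative of $g(t)$, piece (i) gives a contribution of size $O(1/(x\log x))$. The decisive computation is for piece (ii): one checks
\[
\int_x^\infty (\log t + 1)\,g(t)\,dt = \int_x^\infty \frac{dt}{t^2} + O\!\left(\int_x^\infty \frac{dt}{t^2 \log t}\right) = \frac{1}{x} + O\!\left(\frac{1}{x\log x}\right),
\]
which is $O(1/x)$. Pieces (iii) and (iv), after being multiplied by $g(t)$ and integrated from $x$ to $\infty$, are absorbed into $O(1/x^2)$ by trivial majorization. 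Summing the four contributions yields the claimed bound $\hat{J}(x;q,a) = O(1/x)$.

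There is no real obstacle here; the argument is essentially a bookkeeping exercise governed by the decay of $g$. The only point requiring mild care is the exchange of summation and integration for the infinite series in piece (iv), which is justified by the absolute and uniform convergence of $\sum_n t^{-2n}/n$ on $[x,\infty)$ for $x>1$. It is worth noting that the $O(1/x)$ bound is sharp and arises entirely from the logarithmic piece (ii); all other pieces are smaller by at least a factor of $\log x$.
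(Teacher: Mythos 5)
Your proposal is correct and follows essentially the same strategy as the paper: bound $\hat R'(t;q,a)$ by its decay in $t$ and integrate against $g(t)$, with the logarithmic piece controlling the final $O(1/x)$. The only substantive difference is that the paper carries explicit numerical constants through the estimate (combining the infinite series with the $t^{-1}$ term into $\sum_k \nu(k;q,a)t^{-k}/k$ and bounding it by $-\tfrac{\varphi(q)}{2}\log(1-1/t)\leq 0.01$ for $t>e^4$), which it then reuses in the proof of Theorem~\ref{criterion}; your argument is a clean asymptotic version of the same computation, and your remark that the $O(1/x)$ comes from the $m_0(\chi)\log t$ piece matches the paper's bound \eqref{Jhattotal}.
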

\begin{proof}
In order to bound $\hat{J}(x;q,a)$, we aim to bound $\hat{R}'(t;q,a)$ in absolute value. If we write 
\[\nu(k;q,a) = 
\begin{cases}
\displaystyle{\sum_{\chi \textup{ odd}}}\overline{\chi}(a) & \textnormal{if } k \textup{ is odd}, 
\\
\displaystyle{\sum_{\chi \textup{ even}}}\overline{\chi}(a) &\textnormal{if } k \textrm { is even},
\end{cases}
\]
then we may observe that 
\begin{equation}\label{sumtolog}
 \sum_{\chi \textup{ odd}} \overline{\chi}(a) \sum_{n=1}^\infty \frac{t^{-2n-1}}{2n+1} + \sum_{\chi \textup{ even}} \overline{\chi}(a) \sum_{n=1}^\infty \frac{t^{-2n}}{2n} + \sum_{\chi \textup{ odd}}\overline{\chi}(a) t^{-1} = \sum_{k=1}^\infty \frac{\nu(k;q,a) t^{-k}}{k},
\end{equation}
where the terms on the left all appear in the expression for $\hat{R}'(t;q,a)$ in Lemma \ref{ghatprime}. 
Since there are $\tfrac{\varphi(q)}{2}$ even and $\tfrac{\varphi(q)}{2}$ odd characters respectively, $\lvert \nu(x;q,a) \rvert \leq \tfrac{\varphi(q)}{2}$ and therefore, for $t >1$,
\[\left| \sum_{k=1}^\infty \frac{\nu(k;q,a) t^{-k}}{k} \right|  \leq \frac{\varphi(q)}{2} \sum_{k=1}^\infty \frac{t^{-k}}{k} = -\frac{\varphi(q)}{2}\log{\left(1-\frac{1}{t}\right)}.\]
Observe that $-\log(1-\frac{1}{t})$ is always positive and decreasing on $(1,\infty)$, and therefore for $t > e^4$,
\begin{equation}\label{hatboundsum}
 \begin{aligned}[b]
  &\frac{1}{\varphi(q)}\left[\sum_{\chi \textup{ odd}} \overline{\chi}(a) \sum_{n=1}^\infty \frac{t^{-2n-1}}{2n+1} + \sum_{\chi \textup{ even}} \overline{\chi}(a) \sum_{n=1}^\infty \frac{t^{-2n}}{2n} + \sum_{\chi \textup{ odd}}\overline{\chi}(a) t^{-1} \right] \\
  &\leq -\frac{1}{2}\log{\left(1-\frac{1}{t}\right)} \leq  -\frac{1}{2}\log{\left(1-\frac{1}{e^4}\right)} \leq 0.01.
 \end{aligned}
\end{equation}

Returning to expression for $\hat{R}'(t;q,a)$ in Lemma  \ref{ghatprime}, we can take the absolute value and apply \eqref{hatboundsum} to determine that for $t > e^4$,
\begin{equation}\label{hatboundtotal}
 \begin{aligned}[b]
  \lvert \hat{R}'(t;q,a) \rvert &\leq \frac{1}{\varphi(q)}\left |\sum_{\chi \textup{ odd}} \overline{\chi}(a) \sum_{n=1}^\infty \frac{t^{-2n-1}}{2n+1} + \sum_{\chi \textup{ even}} \overline{\chi}(a) \sum_{n=1}^\infty \frac{t^{-2n}}{2n} + \sum_{\chi \textup{ odd}}\overline{\chi}(a) t^{-1} \right| \\ 
  &+ \frac{1}{\varphi(q)}\left|\sum_{\chi \imod{q}} \overline{\chi}(a) b(\chi) \right| + \frac{1}{\varphi(q)} \left| \sum_{\chi \imod{q}} \overline{\chi}(a) m_0(\chi) \log{t}\right| \\
  &\leq 0.01 + \frac{\mathcal{B}_q}{\varphi(q)} + \frac{\mathcal{M}_q \log{t}}{\varphi(q)},
 \end{aligned}
\end{equation}
where $ \mathcal{B}_{q}{:=}\sum_{\chi \imod{q}} \lvert b(\chi) \rvert $ and $\mathcal{M}_{q} {:=} \sum_{\chi \imod{q}} m_0(\chi)$. 
With this bound in place, we may now turn our attention to estimating $\hat{J}(x;q,a)$. For $x>e^4$, we have, by \eqref{hatboundtotal},
\begin{equation}\label{Jhateasy}
\begin{aligned}[b]
 \left| \hat{J}(x;q,a) \right| &= \left| \int_x^\infty \! \hat{R}'(t;q,a)\, d\left(\frac{-1}{t\log{t}}\right) \right| 
 &\leq \int_x^\infty \! \left( 0.01 + \frac{\mathcal{B}_q}{\varphi(q)} \right)\, d\left(\frac{-1}{t\log{t}}\right) + \int_x^\infty \! \frac{\mathcal{M}_q \log{t}}{\varphi(q)}\, d\left(\frac{-1}{t\log{t}}\right).
\end{aligned}
\end{equation}
Evaluating integrals in \eqref{Jhateasy} yields
\begin{equation}\label{Jhattotal}
 \left| \hat{J}(x;q,a) \right| \leq \frac{0.01 \varphi(q) + \mathcal{B}_q + \mathcal{M}_q}{\varphi(q)x\log{x}} + \frac{\mathcal{M}_q}{\varphi(q)x}.
\end{equation}

This implies that $\hat{J}(x;q,a) = \bigO(1/x)$.
\end{proof}
\begin{proof}[Proof of Theorem \ref{oscillation}]
The result follows by applying Corollary \ref{corr} and Lemma \ref{lemmm} to Proposition \ref{transferup}.
\end{proof}

\section{Computation of $\mathcal{F}_q$}\label{comp}

Consider
\begin{equation}\label{truefq}
\mathcal{F}_q = \sum_{\chi \imod{q}} \mathcal{F}(\chi),
\end{equation}
where, 
\[\mathcal{F}(\chi) = \sum_{\rho \in \mathcal{Z}(\chi')} \frac{1}{\rho(1-\rho)},\]
recalling that $\mathcal{Z}(\chi) = \{ \rho \in \C \text{ ; } L(\rho,\chi) = 0 \text{, } \Re(\rho) \geq 0 \text{ and } \rho \neq 0 \}$. 

 Let $q =1$. Corollary 10.14 of \cite{montgomery2007} establishes
 \begin{equation}\label{f1}
\mathcal{F}_1 = \sum_\rho \frac{1}{\rho(1-\rho)} = 2 + \gamma -\log{\pi} - 2\log{2} \approx 0.04619,
 \end{equation}
 where the sum is over the non-trivial zeroes of $\zeta(s)$. 

For larger $q$, we keep \eqref{f1} in mind, since the principal character modulo $q$ will always be induced by the trivial character, and therefore 
\[\sum_{\rho \in \mathcal{Z}(\chi_{0}')} \frac{1}{\rho(1-\rho)} = \mathcal{F}_1\]
for any $q$. If $\chi$ is not principal, its contribution to $\mathcal{F}_{q}$ is determined by the results of Corollary 10.18 of \cite{montgomery2007} which determine that
\begin{equation}\label{Fcalc}
 \sum_{\rho \in \mathcal{Z}(\chi')} \frac{1}{\rho(1-\rho)} =  \log{\frac{q}{\pi}} + 2\Re(\frac{L'}{L}(1,\overline{\chi'})) - \gamma - (1-\alpha)2\log{2},
\end{equation}
where $\alpha=1$ if $\chi$ is odd and $\alpha=0$ if $\chi$ is even.

Suppose $q = p$ is an odd prime, so that all the nonprincipal characters modulo $p$ are primitive. Then, summing \eqref{Fcalc} over all characters yields
\begin{equation}\label{primecalc}
 \mathcal{F}_{p} = \mathcal{F}_{1} + 2\gamma_p - {p}{\gamma} + (2-p)\log\left(\frac{2\pi}{p}\right) + \log{2}, 
\end{equation}
where $$\gamma_p=\gamma+\sum_{\chi \neq \chi_0}  \frac{L'}{L}(1,{\chi})$$ is the Euler-Kronecker constant associated with the cyclotomic field $\Q(e^{2\pi i / p})$. Computations of the value of $\gamma_p$ are provided in \cite{ford2014}. Using these and \eqref{primecalc} we determine the value of $\mathcal{F}_p$ for odd primes up to $p = 149$. Several of these values are listed in Table \ref{Fprime}.

\begin{table}[ht]
\begin{tabular}{rrr}
\rowcolor[HTML]{EFEFEF} 
\multicolumn{1}{c|}{\cellcolor[HTML]{EFEFEF}$p$} & \multicolumn{1}{c|}{\cellcolor[HTML]{EFEFEF}{\color[HTML]{000000} $\gamma_p$}} & \multicolumn{1}{c}{\cellcolor[HTML]{EFEFEF}{\color[HTML]{000000} $\mathcal{F}_p$}} \\ \hline
\multicolumn{1}{r|}{3}                           & \multicolumn{1}{r|}{0.94550}                                                   & 0.15942                                                                            \\
\multicolumn{1}{r|}{5}                           & \multicolumn{1}{r|}{1.72062}                                                   & 0.60919                                                                            \\
\multicolumn{1}{r|}{7}                           & \multicolumn{1}{r|}{2.08759}                                                   & 1.41418                                                                            \\
\multicolumn{1}{r|}{11}                          & \multicolumn{1}{r|}{2.41542}                                                   & 4.26098                                                                            \\
\multicolumn{1}{r|}{13}                          & \multicolumn{1}{r|}{2.61076}                                                   & 6.45484                                                                            \\
\multicolumn{1}{r|}{17}                          & \multicolumn{1}{r|}{3.58198}                                                   & 13.02067                                                                           \\
\multicolumn{1}{c}{$\vdots$}                     & \multicolumn{1}{c}{$\vdots$}                                                   & \multicolumn{1}{c}{$\vdots$}                                                       \\
\multicolumn{1}{r|}{139}                         & \multicolumn{1}{r|}{5.88917}                                                   & 356.51847                                                                          \\
\multicolumn{1}{r|}{149}                         & \multicolumn{1}{r|}{5.98342}                                                   & 392.11323                                                                         
\end{tabular}
\caption{Some values of $\mathcal{F}_p$.}
\label{Fprime}
\end{table}

Now, suppose $q = 2p$, where $p$ is either an odd prime or 1. Then all of the characters mod $q$ are induced by the characters mod $p$, and so 
\[\mathcal{F}_{2p} = \mathcal{F}_p.\]

For $q = 4, 8,$ and $12$, the matter of computing $\mathcal{F}(\chi)$ via \eqref{Fcalc} has been left to the Python package \textsc{mpmath} \cite{mpmath}, in particular for the computation of the logarithmic derivative $\frac{L'}{L}(1,\overline{\chi'})$. We include values related to \eqref{Fcalc} towards the computation of $\mathcal{F}_4$, $\mathcal{F}_8$, and $\mathcal{F}_{12}$ in Tables \ref{fq4}, \ref{fq8}, and \ref{fq12}, respectively .  The numbering of the characters follows \cite{lmfdb}. 

\begin{table}[H]
\centering
\begin{tabular}{|c|lll|}
\hline
\rowcolor[HTML]{EFEFEF} 
$\chi$ & \multicolumn{1}{c}{\cellcolor[HTML]{EFEFEF}$\alpha$} & \multicolumn{1}{c}{\cellcolor[HTML]{EFEFEF}$\Re(\frac{L'}{L}(1,\overline{\chi}))$} & \multicolumn{1}{c|}{\cellcolor[HTML]{EFEFEF}$\mathcal{F}(\chi)$} \\ \hline
$\chi_1(1,\cdot)$                       & \multicolumn{1}{l|}{0} & \multicolumn{1}{c|}{}          & 0.0461914 \\ \hline
$\chi_4(3,\cdot)$                       & \multicolumn{1}{l|}{1} & \multicolumn{1}{l|}{0.2456096} & 0.1555680 \\ \hline
\cellcolor[HTML]{EFEFEF}$\mathcal{F}_4$ & \multicolumn{3}{c|}{0.2017594} \\ \hline
\end{tabular}
\caption{Values relevant to the computation of $\mathcal{F}_4$.}
\label{fq4}
\end{table}

\begin{table}[H]
\centering
\begin{tabular}{|c|lll|}
\hline
\rowcolor[HTML]{EFEFEF} 
$\chi$ & \multicolumn{1}{c}{\cellcolor[HTML]{EFEFEF}$\alpha$} & \multicolumn{1}{c}{\cellcolor[HTML]{EFEFEF}$\Re(\frac{L'}{L}(1,\overline{\chi}))$} & \multicolumn{1}{c|}{\cellcolor[HTML]{EFEFEF}$\mathcal{F}(\chi)$} \\ \hline
$\chi_1(1,\cdot)$                       & \multicolumn{1}{l|}{0} & \multicolumn{1}{c|}{}          & 0.0461914 \\ \hline
$\chi_8(3,\cdot)$                       & \multicolumn{1}{l|}{1} & \multicolumn{1}{l|}{-0.0207114} & 0.3160732 \\ \hline
\multicolumn{1}{|l|}{$\chi_8(5,\cdot)$} & \multicolumn{1}{l|}{0} & \multicolumn{1}{l|}{0.6321150} & 0.2354316 \\ \hline
\multicolumn{1}{|l|}{$\chi_4(3,\cdot)$} & \multicolumn{1}{l|}{1} & \multicolumn{1}{l|}{0.2456096} & 0.1555680 \\ \hline
\cellcolor[HTML]{EFEFEF}$\mathcal{F}_8$ & \multicolumn{3}{c|}{0.7532641} \\ \hline
\end{tabular}
\caption{Values relevant to the computation of $\mathcal{F}_8$.}
\label{fq8}
\end{table}

\begin{table}[H]
\centering
\begin{tabular}{|c|lll|}
\hline
\rowcolor[HTML]{EFEFEF} 
$\chi$ & \multicolumn{1}{c}{\cellcolor[HTML]{EFEFEF}$\alpha$} & \multicolumn{1}{c}{\cellcolor[HTML]{EFEFEF}$\Re(\frac{L'}{L}(1,\overline{\chi}))$} & \multicolumn{1}{c|}{\cellcolor[HTML]{EFEFEF}$\mathcal{F}(\chi)$} \\ \hline
\multicolumn{1}{|c|}{$\chi_1(1,\cdot)$}                      & \multicolumn{1}{l|}{0} & \multicolumn{1}{c|}{}          & 0.0461914 \\ \hline
\multicolumn{1}{|c|}{$\chi_3(2,\cdot)$}                       & \multicolumn{1}{l|}{1} & \multicolumn{1}{l|}{0.3682816} & 0.1132300 \\ \hline
\multicolumn{1}{|c|}{$\chi_4(3,\cdot)$} & \multicolumn{1}{l|}{1} & \multicolumn{1}{l|}{0.2456096} & 0.1555680 \\ \hline
\multicolumn{1}{|c|}{$\chi_{12}(11,\cdot)$} & \multicolumn{1}{l|}{0} & \multicolumn{1}{l|}{0.4767499} & 0.3301666 \\ \hline
\cellcolor[HTML]{EFEFEF}$\mathcal{F}_{12}$ & \multicolumn{3}{c|}{0.6451560} \\ \hline
\end{tabular}
\caption{Values relevant to the computation of $\mathcal{F}_{12}$.}
\label{fq12}
\end{table}

Finally, for $q = 9$ we implemented a naive version of the methods suggested in \cite[Section 3.2]{ford2014} to compute $L(1, \chi)$ and $L^\prime(1, \chi)$, for primitive characters $\chi$ modulo $3$ and $9$. Then we applied \eqref{fq} to compute $\mathcal{F}_9$. 

\section{Proof of Theorem \ref{criterion}}\label{eight}

\begin{proof}
Let $q \leq 10$ or $q = 12, 14$ and $a = 1$. Recall that the satisfaction of the inequality 
\[\frac{\bar{N}_k}{\varphi(\bar{N}_k)(\log(\varphi(q)\log{\bar{N}_k}))^{\frac{1}{\varphi(q)}}} > \frac{1}{C(q,1)}\]
for all positive integers $k$ is equivalent to $\log{f(x;q,1)} < 0$ for all $x > 1$. For $q = 1$, Theorem \ref{criterion} is exactly Theorem 2 of \cite{nicolas1983}.  

For $q > 1$, it is a consequence of part (a) of Theorem \ref{big5} that if $\log{f(x;q,1)} < 0$ for all $x$, then SH${}_{q, a}$ is true for the given $q$ and $a$. Since $a = 1$, SH${}_{q, a}$ implies (and in fact is equivalent to) $\textrm{GRH}_q$. Hence, to establish Theorem \ref{criterion}, we only need to show that if $\textrm{GRH}_q$ is true, then $\log{f(x;q,a)} < 0$ for all $x$. In the case of $q = 2$, observe that $f(x;1,1) > f(x;2,1)$ since $C(2,1) = 2{C(1,1)}$ and therefore the work of Nicolas shows that $1 > f(x;1,1) > f(x;2,1)$, and hence Theorem \ref{criterion} holds in the case $q = 2$.  

For the remaining moduli, \eqref{geou} and \eqref{inception} imply that for $x > 1$
\begin{equation}
\label{K1}
 \log{f(x;q,1)} \leq K(x;q,1) + \frac{1}{2(x-1)},
\end{equation}
since $S(x;q,a)\left(\tfrac{x\log{x} - c\log{c}}{(x\log{x})(c\log{c})}\right)$ is always negative. Moreover, if $x_q$ is defined as the smallest $x$ for which 
\begin{equation}
\label{xq}
\frac{\theta(x^\frac{1}{2};q,b)}{x^\frac{1}{2}} > \frac{0.6}{\phi(q)}
\end{equation}
for all $b$ in $\left(\Z/q\Z\right)^\times$, then \eqref{tetasi} implies that for $x \geq x_q$,
\begin{equation}
 \label{K2}
 \theta(x;q,1) \leq \psi(x;q,1) - \frac{0.6\mathcal{R}_{q,1}}{\phi(q)}x^{\frac{1}{2}}.
\end{equation}
It follows from \eqref{K2} that 
\[K(x;q,1) \leq J(x;q,1) - \frac{0.6\mathcal{R}_{q,1}}{\phi(q)}F_{\frac{1}{2}}(x),\]
which, with \eqref{K1}, establishes
\begin{equation}
\label{logf}
\log{f(x;q,1)} \leq  J(x;q,1) - \frac{ 0.6\mathcal{R}_{q,1}}{\phi(q)}F_{\frac{1}{2}}(x) + \frac{1}{2(x-1)},
\end{equation}
for $x \geq x_q$. 

We aim to show that the right-hand side of inequality \eqref{logf} is negative. We start by establishing an explicit upper bound for $J(x;q,1)$. Recall Lemma \ref{ghatprime} and observe that
\begin{equation*}\label{splitmainJ}
 -\frac{1}{\phi(q)}\sum_{\chi \imod{q}}\overline{\chi}(a)\sum_{\rho \in \mathcal{Z}(\chi)} \frac{F_{\rho}(x)}{\rho} 
 = \frac{1}{\phi(q)} \left( \sum_{\chi \imod{q}} \overline{\chi}(a) \sum_{\rho \in \mathcal{Z}(\chi')}\frac{-F_\rho(x)}{\rho} + \sum_{\chi \neq \chi'} \overline{\chi}(a) \sum_{\substack{\rho \in \mathcal{Z}(\chi) \\ \Re(\rho) = 0}} \frac{-F_\rho(x)}{\rho}\right),
\end{equation*}
where $\chi'$ is the character which induces $\chi$. Since we are assuming $\textrm{GRH}_q$ we have that $\Re(\rho)=1/2$ in the sum involving $\rho\in Z(\chi^\prime)$. Now employing \eqref{rs} to estimate  $r_{1/2}(x)$ and $r_0(x)$ yields
 \begin{equation}\label{ro}
\left| -\frac{1}{\phi(q)}\sum_{\chi \imod{q}}\overline{\chi}(a)\sum_{\rho \in \mathcal{Z}(\chi)} \frac{F_{\rho}(x)}{\rho}\right| \leq
 \frac{1}{\phi(q)}\left(\frac{\left(1+\frac{3}{\log{x}}\right)\mathcal{F}_q}{\sqrt{x}\log{x}} + \frac{\left(1+\frac{2}{\log{x}}\right)\mathcal{G}_q }{x\log{x}}\right),
 \end{equation}
for $x>e^2$, where $\mathcal{F}_q$ is defined in \eqref{fqdef} and \[\mathcal{G}_q = \sum_{\chi \imod{q}} \sum_{\substack{\rho \in \mathcal{Z}(\chi) \\ \Re(\rho) = 0}} \frac{1}{\lvert \rho(1-\rho)\rvert}.\]
We can compute $\mathcal{G}_q$ directly in Maple by recalling that all of the zeroes in the sum are in an arithmetic progression along the imaginary axis. 
Furthermore, in Lemma \ref{lemmm} if we are less zealous with our use of absolute values in arriving at \eqref{Jhattotal} we determine that if $x > e^4$ we have
\begin{equation}\label{hat}
\hat{J}(x;q,1) \leq \frac{ 0.01\phi(q) - \mathcal{B}_q - \mathcal{M}_q}{x\log{x}} - \frac{\mathcal{M}_q}{x}.
\end{equation}
Now by applying \eqref{ro} and \eqref{hat} in Lemma \ref{ghatprime} we conclude that under the assumption of  $\textrm{GRH}_q$  for $x>\max\{e^4, x_q\}$ we have
 \begin{equation}\label{thesisbound}
  J(x;q,1) \leq \frac{1}{\phi(q)}\left(\frac{\left(1+\frac{3}{\log{x}}\right)\mathcal{F}_q}{\sqrt{x}\log{x}} + \frac{\left(1+\frac{2}{\log{x}}\right)\mathcal{G}_q }{x\log{x}}+\frac{ 0.01\phi(q) - \mathcal{B}_q - \mathcal{M}_q}{x\log{x}} - \frac{\mathcal{M}_q}{x}\right).
 \end{equation}

Nicolas \cite[(2.4)]{nicolas2012} establishes that 
\[-F_{\frac{1}{2}}(x) \leq - \frac{2}{\sqrt{x}\log{x}} + \frac{2}{\sqrt{x}\log^2{x}}\]
for $x > 1$ and therefore, with \eqref{logf} and \eqref{thesisbound}, we arrive at an upper bound for $\log{f(x;q,1)}$ under $\textrm{GRH}_q$. 
More precisely, 
 suppose $\textrm{GRH}_q$ is true and let $x > \max\{x_q, e^4\}$. Then, 
  \begin{equation}\label{logpbound}
\log{f(x;q,1)} \leq \frac{\mathcal{F}_q - 1.2\mathcal{R}_{q,1}  + p_q(x)}{\phi(q)\sqrt{x}\log{x}} ,
  \end{equation}
  where $p_q(x)$ is given by 
  \[p_q(x) = \frac{3\mathcal{F}_q + 1.2\mathcal{R}_{q,a}}{\log{x}} + \frac{\left(1+\frac{2}{\log{x}}\right)\mathcal{G}_q}{\sqrt{x}} + \frac{0.01\phi(q) - \mathcal{B}_{q, a} - \mathcal{M}_{q, a}}{\sqrt{x}} - \left( \frac{\mathcal{M}_{q, a}}{x}- \frac{\varphi{(q)}}{2(x-1)} \right)\cdot \sqrt{x}\log{x}.\]
Each of the constants $\mathcal{F}_q$, $\mathcal{G}_q$, $\mathcal{B}_q$ and $\mathcal{M}_q$ may be computed precisely.
Observe that $p_q(x)$ is eventually positive and decreasing toward 0 as $x$ tends to infinity, and therefore will take a maximum value $\mathcal{P}_q$ on the interval $[e^{10}, \infty)$ (note that $e^{10} \approx 22027$). It follows that 
\[\log{f(x;q,1)} \leq \frac{\mathcal{F}_q - 1.2\mathcal{R}_{q,a} + \mathcal{P}_q}{\phi(q)\sqrt{x}\log{x}} \]
for $x > \max\{x_q, e^{10}\}$.  In Table \ref{computations} for each $q \in \{3,4,5,6,7,8,9,10,12,14\}$ we verify that $\mathcal{F}_q - 1.2\mathcal{R}_{q,a}+\mathcal{P}_q$ is negative, and therefore $\log{f(x;q,1)} < 0$ for $x > \max\{x_q, e^{10}\}$ under $\textrm{GRH}_q$. 

\begin{table}[ht]
\begin{tabular}{|r|r|r|r|r|r|r|r|}
\hline
\rowcolor[HTML]{EFEFEF} 
\multicolumn{1}{|c|}{\cellcolor[HTML]{EFEFEF}$q$} & \multicolumn{1}{c|}{\cellcolor[HTML]{EFEFEF}$\mathcal{F}_q$} & \multicolumn{1}{c|}{\cellcolor[HTML]{EFEFEF}$\mathcal{G}_q$} & \multicolumn{1}{c|}{\cellcolor[HTML]{EFEFEF}$\mathcal{R}_{q,a}$} & \multicolumn{1}{c|}{\cellcolor[HTML]{EFEFEF}$\mathcal{B}_{q}$} & \multicolumn{1}{c|}{\cellcolor[HTML]{EFEFEF}$\mathcal{M}_{q}$} & \multicolumn{1}{c|}{\cellcolor[HTML]{EFEFEF}$\mathcal{P}_{q}$} & \multicolumn{1}{c|}{\cellcolor[HTML]{EFEFEF}$\mathcal{F}_{q}-1.2\mathcal{R}_{q,1}+\mathcal{P}_{q}$} \\ \hline
3                                                 & 0.1594208                                                    & 0.0986123                                                    & 2                                                                & 2.2367697                                                      & 1                                                              & 0.2668522                                                      & -1.9736270                                                                                           \\ \hline
4                                                 & 0.2017594                                                    & 0.0397208                                                    & 2                                                                & 2.2744923                                                      & 1                                                              & 0.2789234                                                      & -1.9193172                                                                                           \\ \hline
5                                                 & 0.6091908                                                    & 0.2070784                                                    & 2                                                                & 2.3067140                                                      & 2                                                              & 0.3956888                                                      & -1.3951204                                                                                           \\ \hline
6                                                 & 0.1594214                                                    & 0.1177920                                                    & 2                                                                & 1.5436226                                                      & 1                                                              & 0.2717779                                                      & -1.9688008                                                                                           \\ \hline
7                                                 & 1.4141824                                                    & 0.2972734                                                    & 2                                                                & 1.7004570                                                      & 3                                                              & 0.6354003                                                      & -0.3504173                                                                                           \\ \hline
8                                                 & 0.7532641                                                    & 0.0397208                                                    & 4                                                                & 1.6412439                                                      & 2                                                              & 0.6820415                                                      & -3.3646943                                                                                           \\ \hline
9                                                 & 1.4112121                                                    & 0.0986123                                                    & 2                                                                & 2.0466109                                                      & 3                                                              & 0.6305706                                                      & -0.3582173                                                                                           \\ \hline
10                                                & 0.6091908                                                    & 0.8113486                                                    & 2                                                                & 0.9204197                                                      & 3                                                              & 0.3357980                                                      & -1.4550112                                                                                           \\ \hline
12                                                & 0.6451560                                                    & 0.5439353                                                    & 4                                                                & 1.2309413                                                      & 3                                                              & 0.5846683                                                      & -3.5701757                                                                                           \\ \hline
14                                                & 1.4141824                                                    & 0.9935082                                                    & 2                                                                & -0.3789848                                                     & 5                                                              & 0.6550409                                                      & -0.3307767                                                                                           \\ \hline
\end{tabular}
\caption{Values used to verify that $\log{f(x;q,1)} < 0$ for $x > \max\{x_q, e^{10}\}$, under $\textrm{GRH}_q$.}
\label{computations}
\end{table}

In Table \ref{pntval}, we determine the size of $x_q$. First of all using \cite[Equation (A.3)]{bennett2018} we have a constant $c_1(q)<0.4$ such that 
\[\left\lvert \theta(x;q,b) - \frac{x}{\phi(q)} \right\rvert \leq c_1(q)\sqrt{x}\]
for all $b\in \mathbb{Z}_q^{\times}$ and $1 \leq x < 10^{10}$. This implies that 
\[\frac{\theta(x^\frac{1}{2};q,b)}{x^\frac{1}{2}} > \frac{0.6}{\phi(q)}\]
for $ \left(\frac{\phi(q)c_1(q)}{0.4}\right)^4 < x < 10^{20}$. For larger $x$, \cite[Theorem 1]{ramare1996} provides $c_2(q) < 0.4$ for which 
\[\left\lvert \theta(x;q,b) - \frac{x}{\phi(q)} \right\rvert \leq c_2(q) \frac{x}{\phi(q)}\]
for all $b\in\mathbb{Z}_q^{\times}$ and $x \geq 10^{10}$. Therefore, 
\[\frac{\theta(x^\frac{1}{2};q,b)}{x^\frac{1}{2}} \geq \frac{1 - c_2(q)}{\phi(q)} > \frac{0.6}{\phi(q)},\]
for all $b\in\mathbb{Z}_q^{\times}$ and  $x \geq 10^{20}$. Hence, for $x>x_q=\left(\frac{\phi(q)c_1(q)}{0.4}\right)^4$ the inequality \eqref{xq} holds. Thus,
\[\log{f(x;q,1)} < 0\]
for $x > \max\{\lfloor x_q \rfloor, e^{10}\}$, where $\lfloor x_q \rfloor$ is given in Table \ref{pntval}. 

For $1 < x \leq \max\{\lfloor x_q \rfloor, e^{10}\}$, we have verified $\log{f(x;q,1)} < 0$ by direct computation. 
\begin{table}[h]
\begin{tabular}{|r|r|r|r||r|r|r|r|}
\hline
\rowcolor[HTML]{EFEFEF} 
\multicolumn{1}{|c|}{\cellcolor[HTML]{EFEFEF}$q$} & \multicolumn{1}{c|}{\cellcolor[HTML]{EFEFEF}$c_1(q)$} & \multicolumn{1}{c|}{\cellcolor[HTML]{EFEFEF}$c_2(q)$} & \multicolumn{1}{c||}{\cellcolor[HTML]{EFEFEF}$\lfloor{x_q}\rfloor$} & \multicolumn{1}{c|}{\cellcolor[HTML]{EFEFEF}$q$} & \multicolumn{1}{c|}{\cellcolor[HTML]{EFEFEF}$c_1(q)$} & \multicolumn{1}{c|}{\cellcolor[HTML]{EFEFEF}$c_2(q)$} & \multicolumn{1}{c|}{\cellcolor[HTML]{EFEFEF}$\lfloor{x_q}\rfloor$} \\ \hline
3   & 1.798158 & 0.002238 & 6535                  & 8   & 1.817557 & 0.002811 & 109133                \\ \hline
4   & 1.780719 & 0.002238 & 6285                  & 9   & 1.108042 & 0.003228 & 76312                 \\ \hline
5   & 1.41248  & 0.002785 & 39805                 & 10  & 1.41248  & 0.002785 & 39805                 \\ \hline
6   & 1.798158 & 0.002238 & 6535                  & 12  & 1.735501 & 0.002781 & 90720                 \\ \hline
7   & 1.116838 & 0.003248 & 78764                 & 14  & 1.105822 & 0.003248 & 75702                 \\ \hline
\end{tabular}
\caption{Values used to determine $x_q$.}
\label{pntval}
\end{table}

Therefore, for the listed values of $q$, $\textrm{GRH}_q$ implies that $\log{f(x;q,1)} < 0$ for all $x > 1$ and Theorem \ref{criterion} is established.
\end{proof}

\subsection*{Acknowledgements}
The authors would like to thank Alia Hamieh, Jean-Louis Nicolas, Timothy Trudgian, and Peng-Jie Wong for correspondence and comments on this work.

\begin{rezabib}

\bib{bennett2018}{article}{
   author={Bennett, M. A.}
   author={Martin, G.}
   author={O'Bryant, K.}
   author={Rechnitzer, A.},
   title={Explicit bounds for primes in arithmetic progressions},
   journal={arXiv preprint arXiv:1802.00085},
   date={2018},
   pages={xv+119}
}

\bib{ford2014}{article}{
   author={Ford, Kevin},
   author={Luca, Florian},
   author={Moree, Pieter},
   title={Values of the Euler $\phi$-function not divisible by a given odd
   prime, and the distribution of Euler-Kronecker constants for cyclotomic
   fields},
   journal={Math. Comp.},
   volume={83},
   date={2014},
   number={287},
   pages={1447--1476},
   issn={0025-5718},
   review={\MR{3167466}},
   doi={10.1090/S0025-5718-2013-02749-4},
}

\bib{FM}{book}{
   author={Fr\"ohlich, A.},
   author={Taylor, M. J.},
   title={Algebraic number theory},
   series={Cambridge Studies in Advanced Mathematics},
   volume={27},
   publisher={Cambridge University Press, Cambridge},
   date={1993},
   pages={xiv+355},
   isbn={0-521-43834-9},
   review={\MR{1215934}},
}

\bib{hardy2008}{book}{
   author={Hardy, G. H.},
   author={Wright, E. M.},
   title={An introduction to the theory of numbers},
   edition={6},
   note={Revised by D. R. Heath-Brown and J. H. Silverman;
   With a foreword by Andrew Wiles},
   publisher={Oxford University Press, Oxford},
   date={2008},
   pages={xxii+621},
   isbn={978-0-19-921986-5},
   review={\MR{2445243}},
}

\bib{ingham1932}{book}{
   author={Ingham, A. E.},
   title={The distribution of prime numbers},
   series={Cambridge Mathematical Library},
   note={Reprint of the 1932 original;
   With a foreword by R. C. Vaughan},
   publisher={Cambridge University Press, Cambridge},
   date={1990},
   pages={xx+114},
   isbn={0-521-39789-8},
   review={\MR{1074573}},
}

\bib{languasco2007}{article}{
   author={Languasco, A.},
   author={Zaccagnini, A.},
   title={A note on Mertens' formula for arithmetic progressions},
   journal={J. Number Theory},
   volume={127},
   date={2007},
   number={1},
   pages={37--46},
   issn={0022-314X},
   review={\MR{2351662}},
   doi={10.1016/j.jnt.2006.12.015},
}
	
\bib{languasco2009}{article}{
   author={Languasco, A.},
   author={Zaccagnini, A.},
   title={On the constant in the Mertens product for arithmetic
   progressions. II. Numerical values},
   journal={Math. Comp.},
   volume={78},
   date={2009},
   number={265},
   pages={315--326},
   issn={0025-5718},
   review={\MR{2448709}},
   doi={10.1090/S0025-5718-08-02148-0},
}

\bib{languasco2010}{article}{
   author={Languasco, A.},
   author={Zaccagnini, A.},
   title={Computing the Mertens and Meissel-Mertens constants for sums over
   arithmetic progressions},
   note={With an appendix by Karl K. Norton},
   journal={Experiment. Math.},
   volume={19},
   date={2010},
   number={3},
   pages={279--284},
   issn={1058-6458},
   review={\MR{2743571}},
   doi={10.1080/10586458.2010.10390624},
}

\bib{leveque1996}{book}{
   author={LeVeque, William J.},
   title={Fundamentals of number theory},
   note={Reprint of the 1977 original},
   publisher={Dover Publications, Inc., Mineola, NY},
   date={1996},
   pages={viii+280},
   isbn={0-486-68906-9},
   review={\MR{1382656}},
}
	
@misc{lmfdb,
  shorthand    = {LMFDB},
  author       = {The {LMFDB Collaboration}},
  title        =  {The {L}-functions and Modular Forms Database},
  howpublished = {\url{http://www.lmfdb.org}},
  year         = {2017},
  note         = {[Online; accessed 20 October 2017]},
}

@book{maple,
 title = {Maple 2017.0},
 publisher = {Maplesoft, a division of Waterloo Maple Inc., Waterloo, Ontario.},
}

\bib{mertens1874}{article}{
   author={Mertens, Franz},
   title={Ein Beitrag zur analytischen Zahlentheorie},
   language={German},
   journal={J. Reine Angew. Math.},
   volume={78},
   date={1874},
   pages={46--62},
   issn={0075-4102},
   review={\MR{1579612}},
   doi={10.1515/crll.1874.78.46},
}
	
\bib{mccurley1984}{article}{
   author={McCurley, Kevin S.},
   title={Explicit estimates for the error term in the prime number theorem
   for arithmetic progressions},
   journal={Math. Comp.},
   volume={42},
   date={1984},
   number={165},
   pages={265--285},
   issn={0025-5718},
   review={\MR{726004}},
   doi={10.2307/2007579},
}

@manual{mpmath,
  key     = {mpmath},
  author  = {Fredrik Johansson and others},
  title   = {{m}pmath: a {P}ython library for arbitrary-precision floating-point arithmetic (version 0.18)},
  note    = {{\tt http://mpmath.org/}},
  month   = {December},
  year    = {2013},
}

\bib{montgomery2007}{book}{
   author={Montgomery, Hugh L.},
   author={Vaughan, Robert C.},
   title={Multiplicative number theory. I. Classical theory},
   series={Cambridge Studies in Advanced Mathematics},
   volume={97},
   publisher={Cambridge University Press, Cambridge},
   date={2007},
   pages={xviii+552},
   isbn={978-0-521-84903-6},
   isbn={0-521-84903-9},
   review={\MR{2378655}},
}

\bib{murty2001}{book}{
   author={Murty, M. Ram},
   title={Problems in analytic number theory},
   series={Graduate Texts in Mathematics},
   volume={206},
   note={Readings in Mathematics},
   publisher={Springer-Verlag, New York},
   date={2001},
   pages={xvi+452},
   isbn={0-387-95143-1},
   review={\MR{1803093}},
   doi={10.1007/978-1-4757-3441-6},
}

\bib{narkiewicz2000}{book}{
   author={Narkiewicz, W\l adys\l aw},
   title={The development of prime number theory},
   series={Springer Monographs in Mathematics},
   note={From Euclid to Hardy and Littlewood},
   publisher={Springer-Verlag, Berlin},
   date={2000},
   pages={xii+448},
   isbn={3-540-66289-8},
   review={\MR{1756780}},
   doi={10.1007/978-3-662-13157-2},
}

\bib{nicolas1983}{article}{
   author={Nicolas, Jean-Louis},
   title={Petites valeurs de la fonction d'Euler},
   language={French, with English summary},
   journal={J. Number Theory},
   volume={17},
   date={1983},
   number={3},
   pages={375--388},
   issn={0022-314X},
   review={\MR{724536}},
   doi={10.1016/0022-314X(83)90055-0},
}
	
\bib{nicolas2012}{article}{
   author={Nicolas, Jean-Louis},
   title={Small values of the Euler function and the Riemann hypothesis},
   journal={Acta Arith.},
   volume={155},
   date={2012},
   number={3},
   pages={311--321},
   issn={0065-1036},
   review={\MR{2983456}},
   doi={10.4064/aa155-3-7},
}

\bib{platt2016}{article}{
   author={Platt, David J.},
   title={Numerical computations concerning the GRH},
   journal={Math. Comp.},
   volume={85},
   date={2016},
   number={302},
   pages={3009--3027},
   issn={0025-5718},
   review={\MR{3522979}},
   doi={10.1090/mcom/3077},
}

\bib{PR}{article}{
   author={Platt, D. J.},
   author={Ramar\'e, O.},
   title={Explicit estimates: from $\Lambda(n)$ in arithmetic progressions
   to $\Lambda(n)/n$},
   journal={Exp. Math.},
   volume={26},
   date={2017},
   number={1},
   pages={77--92},
   issn={1058-6458},
   review={\MR{3599008}},
   doi={10.1080/10586458.2015.1123124},
}

\bib{ramare1996}{article}{
   author={Ramar\'e, Olivier},
   author={Rumely, Robert},
   title={Primes in arithmetic progressions},
   journal={Math. Comp.},
   volume={65},
   date={1996},
   number={213},
   pages={397--425},
   issn={0025-5718},
   review={\MR{1320898}},
   doi={10.1090/S0025-5718-96-00669-2},
}

\bib{rosser1962}{article}{
   author={Rosser, J. Barkley},
   author={Schoenfeld, Lowell},
   title={Approximate formulas for some functions of prime numbers},
   journal={Illinois J. Math.},
   volume={6},
   date={1962},
   pages={64--94},
   issn={0019-2082},
   review={\MR{0137689}},
}

\bib{sagemath}{manual}{
      author={Developers, The~Sage},
       title={{S}agemath, the {S}age {M}athematics {S}oftware {S}ystem
  ({V}ersion 8.3)},
        date={2018},
        note={{\tt http://www.sagemath.org}},
}

\bib{williams1974}{article}{
   author={Williams, Kenneth S.},
   title={Mertens' theorem for arithmetic progressions},
   journal={J. Number Theory},
   volume={6},
   date={1974},
   pages={353--359},
   issn={0022-314X},
   review={\MR{0364137}},
   doi={10.1016/0022-314X(74)90032-8},
}
	
\end{rezabib}

\end{document}